\documentclass{article}

\usepackage{a4}
\usepackage{paralist}
\usepackage{graphicx, psfrag}
\usepackage{pgfplotstable}
\usepackage{subcaption}

\usepackage{amsmath}
\usepackage{amsthm}
\usepackage{amsfonts}
\usepackage{booktabs}
\usepackage{graphicx}
\usepackage{diagbox}

\usepackage{multirow}
\usepackage{hyperref}

\usepackage[ulem=normalem,draft]{changes}

\title{Robust finite element solvers for distributed hyperbolic optimal
  control problems}
\author{Ulrich~Langer\footnote{Institute of Numerical Mathematics,
    Johannes Kepler University Linz, 
    and
    Johann Radon Institute for Computational and Applied Mathematics (RICAM),
    Austrian Academy of Sciences,      
    Altenberger Stra{\ss}e 69, 4040 Linz,
    Austria, Email: ulanger@numa.uni-linz.ac.at},
  \;
  Richard~L\"oscher\footnote{Institut f\"{u}r Angewandte Mathematik,
    Technische Universit\"{a}t Graz, Steyrergasse 30, 8010 Graz, Austria,
    Email: loescher@math.tugraz.at}, 
  \; Olaf~Steinbach\footnote{Institut f\"{u}r Angewandte Mathematik,
    Technische Universit\"{a}t Graz, Steyrergasse 30, 8010 Graz, Austria,
    Email: o.steinbach@tugraz.at}, 
  \; Huidong~Yang\footnote{Faculty of Mathematics, University of Vienna,
  and 
  Doppler Laboratory for Mathematical Modeling and Simulation of Next
  Generations of Ultrasound Devices (MaMSi),
  Oskar--Morgenstern--Platz 1, A-1090 Wien, Austria,
  Email: huidong.yang@univie.ac.at}
}  

\date{}

\def\pub{1}		

\newcommand{\norm}[1]{\|#1\|}

\pgfplotsset{compat=1.16}

\newtheorem{theorem}{Theorem}
\newtheorem{lemma}{Lemma}
\newtheorem{corollary}{Corollary}
\newtheorem{proposition}{Proposition}
\newtheorem{remark}{Remark}

\setlength{\voffset}{-10mm}

\begin{document}

\maketitle

\ifnum\pub=0 
\fi
\ifnum\pub=1
\begin{center}
\vspace*{-4ex}
 \date{\it Dedicated to Gundolf Haase on the occasion of his 60-th birthday}\\[4ex]
\end{center}
\fi

\begin{abstract}
  We propose, analyze, and test new robust iterative solvers for systems
  of linear algebraic equations arising from the space-time finite element
  discretization of reduced optimality systems defining the approximate
  solution of hyperbolic distributed, tracking-type optimal control problems 
  with both the standard $L^2$ and the more general energy regularizations.
  In contrast to the usual time-stepping approach, we discretize the
  optimality system by space-time continuous piecewise-linear finite element
  basis functions which are defined on fully unstructured simplicial meshes. 
  If we aim at the asymptotically best approximation of the given desired
  state $y_d$ by the computed finite element state $y_{\varrho h}$,
  then the optimal choice of the regularization parameter $\varrho$ is 
  linked to the space-time finite element mesh-size $h$ by the relations
  $\varrho=h^4$ and $\varrho=h^2$ for the $L^2$ and the energy regularization,
  respectively. For this setting, we can construct robust (parallel)
  iterative solvers for the reduced finite element optimality systems.
  These results can be generalized to variable regularization parameters 
  adapted to the local behavior of the mesh-size that can heavily change
  in the case of adaptive mesh refinements. The numerical results illustrate
  the theoretical findings firmly.
  \end{abstract} 
\noindent
\begin{keywords} 
  Hyperbolic optimal control problems, $L^2$ regularization, energy
  regularization, space-time finite element methods, error estimates,
  adaptivity, solvers
\end{keywords}
\noindent
\begin{msc}
49J20,  
49M05,  
35L05,  
65M60,  
65M15,  
65N22   
\end{msc}

\section{Introduction}\label{Sec:Introduction}
Let us first consider abstract optimal control problems (OCPs) of the form:
Find the state $y_\varrho \in Y$ and the control $u_\varrho \in U$  
minimizing the cost functional
\begin{equation}
\label{Sec:Introduction:Eqn:AbstractCostFunctional}
\mathcal{J}(y_\varrho,u_\varrho) := \mathcal{J}_\varrho(y_\varrho,u_\varrho):= 
\frac{1}{2} \, \|y_\varrho - y_d\|_H^2 + \frac{\varrho}{2} \, \|u_\varrho \|_U^2,
\end{equation}
subject to the state equation
\begin{equation}
\label{Sec:Introduction:Eqn:AbstractStateEquation}
B y_\varrho = u_\varrho \quad \mbox{in} \;\; U \subset P^*,
\end{equation}
where the desired state (target) $y_d \in H$ and the regularization
parameter $\varrho > 0$ are given. We mention that, in optimal control,
$\varrho$ also allows to influence the costs of the control in terms of
$\|  u \|_U^2$. The state space $Y$, the adjoint state space $P$, the
observation space $H$, and the control space $U$ are Hilbert spaces
equipped with the corresponding norms and scalar products. We assume
that $Y \subset H \subset Y^*$ and $P \subset H \subset P^*$ are Gelfand
triples, where $Y^*$ and $P^*$ denote the dual spaces of $Y$ and $P$,
respectively. The duality products 
$\langle \cdot, \cdot \rangle : Y^* \times Y \to \mathbb{R}$ 
and 
$\langle \cdot, \cdot \rangle : P^* \times P \to \mathbb{R}$ 
are assumed to be extensions of the scalar product $\langle\cdot, \cdot\rangle_H$ in $H$.
The state operator $B$ is usually an isomorphism as mapping from $Y$ to $P^*$.
So, we are interested to control the state equation
\eqref{Sec:Introduction:Eqn:AbstractStateEquation} not only in $U=H$, where
$H=L^2$ in the standard setting ($L^2$-regularization), 
but also in $U=P^*$ that is sometimes called energy regularization.
Such kind of optimal control problems were already studied in the classical 
monograph \cite{LSTY:Lions:1968Book} by Lions who also 
admitted additional control constraints.
Since then many books and  papers on the analysis and numerics 
of such kind of optimal control problems often with additional
inequality constraints imposed on the control $u$ or{/}and the state $y$
have been published. We here refer the reader only to the 
books 
\cite{LLSY:DeLosReyes:2015Book,LSTY:HinzePinnauUlbrichUlbrich:2009Book,
  LLSY:Troeltzsch:2010Book}, and the recent omnibus volume 
\cite{LLSY:HerzogHeinkenschlossKaliseStadlerTrelat:2022Proceedings}
on  optimization and control for partial differential equations (PDEs).

We will here only consider tracking-type, distributed hyperbolic OCPs
that are represented by the model state operator
$B = \Box = \partial_{tt} - \Delta_x$ (wave operator).
The reduced optimality system that characterizes the unique 
solution of the optimal control problem under consideration 
is discretized by an unstructured simplicial finite element (FE) method 
that is a real space-time finite element method; 
see \cite{LLSY:LangerSteinbachTroeltzschYang:2021c,
  LLSY:LangerSteinbachTroeltzschYang:2021b}
and \cite{LLSY:LoescherSteinbach:2024SINUM} 
for the parabolic and hyperbolic cases, respectively.
This all-at-once space-time 
discretization of the reduced optimality system leads to 
a symmetric, but indefinite (SID) system of FE
equations of the form
\begin{equation}
\label{Sec:Introduction:Eqn:AbstractDiscreteReducedOptimalitySystem}
  \begin{bmatrix}
    A_{\varrho h} & B_h\\
    B_h^\top & -M_h
  \end{bmatrix}
  \begin{bmatrix}
   \mathbf{p}_h\\ 
   \mathbf{y}_h\\ 
  \end{bmatrix}
  =
  \begin{bmatrix}
    \mathbf{0}_h\\ 
    -\mathbf{y}_{dh}\\ 
  \end{bmatrix}.
\end{equation}
as in the elliptic case, where the matrix $B_h$ is the FE representation
of the state operator $B$, $M_h$ is nothing but the mass matrix, 
$A_{\varrho h}$ represents the regularization term, and the subscript $h$ is
a suitable discretization parameter. In the standard case of $L^2$
regularization with a constant regularization (cost) parameter $\varrho$,
the matrix $A_{\varrho h}$ equals $\varrho^{-1} \overline{M}_h$, where
$\overline{M}_h$ is the mass matrix from the finite element space for the
approximation of the adjoint state $p$. The matrices $M_h$ and $A_{\varrho h}$
are symmetric and positive definite (SPD). In contrast to this approach to 
time-dependent optimal control problems, the standard time-stepping
discretization combined with a FE space discretization produces smaller
systems of the form
\eqref{Sec:Introduction:Eqn:AbstractDiscreteReducedOptimalitySystem} 
at each time step; see, e.g., 
\cite{LLSY:KunischReiterer2015ANM, PeraltaKunisch:2022}, and 
the references therein. 

There is a huge amount of publications on preconditioners and iterative
solvers for general systems of algebraic equations with symmetric and
indefinite system matrices such as
\eqref{Sec:Introduction:Eqn:AbstractDiscreteReducedOptimalitySystem}.
We refer the reader to the survey papers
\cite{LLSY:BenziGolubLiesen:2005ActaNumerica,LLSY:Wathen:2015ActaNumerica}, 
the books
\cite{LLSY:BaiPan:2021Book,LLSY:ElmanSilvesterWathen:2005Book,
  LLSY:Rozloznik:2018Book}, the review paper \cite{LLSY:MardalWinther:2011NLA},
and the references therein for a comprehensive overview on saddle point
solvers in general. In particular, there are many papers devoted to the
efficient solution of SID systems arising from PDE-constrained OCPs with
the standard $L^2$ regularization and fixed regularization parameter $\varrho$. 
More recently, preconditioners leading to $\varrho$-robust iterative solvers 
have been developed for PDE-constrained OCPs subject to different state
equations without and with control and{/}or state constraints; see, e.g., 
\cite{LLSY:AxelssonKaratson:2020NumerMath,
LLSY:AxelssonNeytchevaStroem:2018JNM,
LLSY:BaiBenziChenWang:2013IMANA,
LLSY:DravinsNeytcheva:2021,
LLSY:PearsonStollWathen:2014NLA,
LLSY:PearsonWathen:2012NLA,
LLSY:SchielaUlbrich:2014SIOPT,
LLSY:SchoeberlZulehner:2007SIMAX,
LLSY:SchulzWittum:2008CVS,
LLSY:StollWathen:2012NLA}
and the references provided in these papers.

In this paper, we first investigate the deviation of the exact state
$y_\varrho$ from the desired state $y_d$ with respect to (wrt) the
$H = L^2(Q)$ norm in dependence on the regularization parameter $\varrho$
and the regularity of the desired state $y_d$. It turns out that the
quantitative behavior is practically the same as was first proved for
elliptic optimal control problems with both $L^2$ and energy regularization
in \cite{LLSY:NeumuellerSteinbach:2021a}. After the simplicial space-time
finite element (FE) discretization, we choose $\varrho$ in such a way that
the finite element state $y_{\varrho h}$, corresponding to the nodal vector
$\mathbf{y}_h$ as part of the solution of
\eqref{Sec:Introduction:Eqn:AbstractDiscreteReducedOptimalitySystem}, provides an asymptotically
optimal approximation to the desired state $y_d$ in the $L^2$ norm. It was
already shown in \cite{LLSY:LoescherSteinbach:2024SINUM} that $\varrho = h^2$
is always the optimal choice in the case of the energy regularization 
independent of the regularity of the desired state $y_d$. In this paper, we
also investigate the standard $L^2$ regularization for which we get
$\varrho = h^4$ as optimal choice. These choices of $\varrho$ provide not
only an optimal balance between the regularization and discretization errors,
but also a well-conditioned primal Schur Complement (SC)
$S_{\varrho h} = B_h^\top A_{\varrho h}^{-1} B_h + M_h$ of the system matrix
in the SID system \eqref{Sec:Introduction:Eqn:AbstractDiscreteReducedOptimalitySystem}. 
More precisely, we show that the Schur complement  $S_{\varrho h}$ is
spectrally equivalent to the mass matrix $M_h$ and, therefore, to the
diagonal lumped mass matrix $D_h = \text{lump}(M_h)$ with computable spectral
equivalence constants. This result is crucial for the construction of fast
iterative solvers for the reduced algebraic optimality system
\eqref{Sec:Introduction:Eqn:AbstractDiscreteReducedOptimalitySystem}. 
It turns out that the Schur-Complement Preconditioned Conjugate Gradient
(SC-PCG) method for solving the SPD SC problem
\begin{equation}
\label{Sec:Introduction:Eqn:SchurComplementSystem}
S_{\varrho h} \mathbf{y}_h = \mathbf{y}_{dh},
\end{equation}
which arises from \eqref{Sec:Introduction:Eqn:AbstractDiscreteReducedOptimalitySystem} by
eliminating the adjoint FE state $\mathbf{p}_h$ from
\eqref{Sec:Introduction:Eqn:AbstractDiscreteReducedOptimalitySystem}, 
is an efficient alternative to the solution of the SID system
\eqref{Sec:Introduction:Eqn:AbstractDiscreteReducedOptimalitySystem} by means of the closely
related Bramble-Pasciak PCG (BP-PCG) \cite{LLSY:BramblePasciak:1988a},
especially, in the case of the $L^2$ regularization when 
$A_{\varrho h}^{-1}$ can be replaced by
$(\text{lump}(\overline{M}_{\varrho h}))^{-1}$ ensuring a fast matrix-by-vector
multiplication. We note that these results remain valid for the corresponding
variable choice of the regularization parameter $\varrho$ adapted to the
local behavior of the size of the simplicial space-time mesh that can heavily
vary in the case of adaptive FE discretisations as used in some of our
numerical experiments.

The remainder of the paper is organized as follows: 
In Section~\ref{Sec:PreliminariesAndSpecifications}, we introduce some
preliminary material, and specify the hyperbolic OCPs that we are going to
investigate. More precisely, we consider the standard $L^2$ regularization
and the more general energy regularization. The space-time finite element
discretization of these hyperbolic OCPs on unstructured simplicial meshes
is presented and analyzed in
Section~\ref{Sec:SpaceTimeFiniteElementDiscretization}.
Section~\ref{Sec:Solvers} is devoted to efficient iterative methods for 
solving the algebraic systems arising from the space-time finite element 
discretization of the reduced optimality systems.
In Section~\ref{Sec:NumericalResults}, we present and discuss our 
numerical results. Finally, we draw some conclusions and give an outlook 
in Section~\ref{Sec:ConclusionAndOutlook}.

\section{Preliminaries and specifications}
\label{Sec:PreliminariesAndSpecifications}
As a model problem, we consider a distributed optimal control problem
subject to the wave equation with homogeneous Dirichlet boundary and
initial conditions. Therefore, let $\Omega \subset {\mathbb{R}}^d$,
$d=1,2,3$, be a bounded spatial domain with, for $d=2,3$, 
Lipschitz boundary $\Gamma = \partial \Omega$, and let $0<T<\infty$ be
a given finite time horizon. Further we introduce the space-time cylinder
$Q := \Omega \times (0,T)$, its lateral boundary
$\Sigma := \Gamma \times (0,T)$, its bottom
$\Sigma_0 := \Omega \times \{0\}$, and its top
$\Sigma_T := \Omega \times \{T\}$. For a given target $y_d \in L^2(Q)$ and
a regularization parameter $\varrho > 0$, we consider the minimization of
the cost functional
\eqref{Sec:Introduction:Eqn:AbstractCostFunctional} with $H=L^2(Q)$,
subject to the homogeneous initial-boundary value problem 
for the wave equation 
\begin{equation}\label{eq:wave-PDE-constraint}
  \Box y_\varrho := \partial_{tt} y_\varrho - \Delta_x y_\varrho =
  u_\varrho \;\mbox{in}\,Q,\; y_\varrho = 0 \;\mbox{on}\,\Sigma,\;
  y_\varrho = \partial_t y_\varrho = 0 \;\mbox{on}\,\Sigma_0.
\end{equation}
In order to derive a variational formulation of the wave equation
\eqref{eq:wave-PDE-constraint}, we introduce
\begin{align*}
  H_{0;0,}^{1,1}(Q) &:= \{y \in L^2(Q): \nabla_x y \in [L^2(Q)]^d, \partial_t y\in L^2(Q),
                          y=0\,\text{on}\, \Sigma, y=0\,\text{on}\, \Sigma_0\},\\
   H_{0;,0}^{1,1}(Q) &:= \{q \in L^2(Q): \nabla_x q \in [L^2(Q)]^d, \partial_t q\in L^2(Q),
                          q=0\,\text{on}\, \Sigma, q=0\,\text{on}\, \Sigma_T\},                       
\end{align*}
both equipped with the norm 
$|v|_{H^1(Q)} = (\norm{\partial_t v}_{L^2(Q)}^2 + \norm{\nabla_x v}_{L^ 2(Q)}^2)^{1/2}$.
Note that $y \in H^{1,1}_{0;0,}(Q)$ covers zero initial conditions $y(x,0)=0$,
while, for $q \in H^{1,1}_{0;,0}(Q)$, we have $q(x,T)=0$, $x \in \Omega$. We now
consider the variational formulation of \eqref{eq:wave-PDE-constraint}
to find $y_\varrho \in H^{1,1}_{0;0,}(Q)$ such that
\begin{equation}\label{eq:wave-VF-L2}
  b(y_\varrho,q) :=
  - \langle \partial_t y_\varrho , \partial_t q \rangle_{L^2(Q)} +
  \langle \nabla_x y_\varrho, \nabla_x q \rangle_{L^2(Q)} =
  \langle u_\varrho , q \rangle_{L^2(Q)}
\end{equation}
is satisfied for all $q \in H^{1,1}_{0;,0}(Q)$. Unique solvability of
\eqref{eq:wave-VF-L2} follows when assuming $u_\varrho \in L^2(Q)$, see,
e.g., \cite{LSTY:Ladyzhenskaya:1985a,LLSY:SteinbachZank:2020}. This
motivates to consider the optimal control problem with $L^2$
regularization first.

\subsection{The $L^2$ regularization $U = L^2(Q)$}\label{sec:wave-l2-reg}
Let us first consider the more common $L^2$ regularization with $U=L^2(Q)$.
Then, for any $u_\varrho \in L^2(Q)$, the variational formulation
\eqref{eq:wave-VF-L2} admits a unique solution $y_\varrho \in H_{0;0,}^{1,1}(Q)$
satisfying the stability estimate
\[
  \| y_\varrho \|_{H_{0;0,}^{1,1}(Q)} \leq
  \frac{T}{\sqrt{2}} \, \| u_\varrho \|_{L^2(Q)},
\]
see, e.g., \cite[Theorem 5.1, p.~169]{LSTY:Ladyzhenskaya:1985a}, or
\cite[Theorem 5.1]{LLSY:SteinbachZank:2020}. Thus, we can define the
solution operator $y_\varrho = \mathcal{S}u_\varrho$ with
$\mathcal{S} : L^2(Q)\to H_{0;0,}^{1,1}(Q)$, and we can consider the
reduced cost functional 
\[
  \widehat{\mathcal{J}}(u_\varrho) =
  \frac{1}{2} \, \| \mathcal{S}u_\varrho-y_d \|_{L^2(Q)}^2 +
  \frac{1}{2} \, \varrho \, \| u_\varrho \|_{L^2(Q)}^2,
\]
for which the minimizer satisfies the gradient equation 
\begin{equation}\label{eq:wave-gradient-equation}
  \mathcal{S}^\ast(\mathcal{S}u_\varrho-y_d) +
  \varrho \, u_\varrho = 0 \quad \text{in} \; L^2(Q). 
\end{equation}     
When introducing the adjoint state
$p_\varrho := \mathcal{S}^\ast(\mathcal{S}u_\varrho-y_d) \in P = H_{0;,0}^{1,1}(Q)$
as the unique weak solution of the adjoint problem
\begin{equation} \label{eq:wave-adjoint-equation}
  \partial_{tt} p_\varrho - \Delta_x p_\varrho =
  y_\varrho - y_d \; \mbox{in}\,Q,\; p_\varrho = 0 \;\mbox{on}\,\Sigma,\;
  p_\varrho = \partial_t p_\varrho = 0 \;\mbox{on}\,\Sigma_T,
\end{equation}
we end up with the optimality system, including the forward problem
\eqref{eq:wave-PDE-constraint}, the adjoint problem
\eqref{eq:wave-adjoint-equation}, and the gradient equation
\eqref{eq:wave-gradient-equation}. 

\begin{remark}
  We note that, by the gradient equation \eqref{eq:wave-gradient-equation},
  we have
  \begin{equation}\label{gradient equation u p}
    u_\varrho = -\varrho^{-1} \, p_\varrho \in H_{0;,0}^{1,1}(Q).
  \end{equation}
  Thus, actually the control $u_\varrho \in H^{1,1}_{0;,0}(Q)$ is more regular,
  but also
  inherits, probably unpleasant, boundary and terminal conditions from
  the adjoint state. 
\end{remark}

\noindent
When eliminating the control $u_\varrho = \Box y_\varrho$, from
\eqref{eq:wave-PDE-constraint}, we get by the gradient equation that
$p_\varrho + \varrho \, \Box y_\varrho =0$, and the reduced optimality system in
variational form is to find
$(p_\varrho,y_\varrho)\in H_{0;,0}^{1,1}(Q)\times H_{0;0,}^{1,1}(Q)$
such that 
\begin{equation}\label{eq:wave-continuous-VF-optimality-system-common-L2}
  \begin{array}{rcrcll}
    \varrho^{-1} \, \langle p_\varrho, q \rangle_{L^2(Q)}
    & + & b(y_\varrho,q)
    & = & 0 & \forall \, q \in H_{0;,0}^{1,1}(Q), \\[2mm]
    -b(z,p_\varrho)
    & + & \langle y_\varrho,z \rangle_{L^2(Q)}
    & = & \langle y_d,z \rangle_{L^2(Q)} & \forall \, z\in H_{0;0,}^{1,1}(Q). 
  \end{array}
\end{equation}
Unique solvability of
\eqref{eq:wave-continuous-VF-optimality-system-common-L2} follows
from the way we derived the system. 

\begin{remark}
  In addition, we can eliminate the adjoint variable
  $p_\varrho = -\varrho \, u_\varrho = - \varrho \, \Box y_\varrho$ in the
  adjoint equation \eqref{eq:wave-adjoint-equation} to conclude 
  \begin{align*}
    \varrho \, \Box ^2 y_\varrho  = - \Box p_\varrho = y_d - y_\varrho,  
  \end{align*}
  and, therefore, we get
  \begin{equation}\label{eq:wave-operator-equation-common-L2}
    \begin{array}{rcrclcl}
      \multicolumn{3}{r}{\varrho \, \Box^2 y_\varrho +y_\varrho}
      & = & y_d  & &  \mbox{in}\; Q,\\[1mm]
      && y_\varrho = \Box \, y_\varrho & = & 0 & &  \mbox{on}\;  \Sigma, \\[1mm]
      && y_\varrho=\partial_t y_\varrho
          & = & 0 & &   \mbox{on}\;  \Sigma_0, \\[1mm]
      && \Box \, y_\varrho=\partial_t \Box \, y_\varrho &= & 0
                   & &   \mbox{on}\;  \Sigma_T,
    \end{array}
  \end{equation}
  which is nothing but a kind of bi-wave equation with boundary and terminal
  conditions inherited from the adjoint state $p_\varrho$. 	
\end{remark}

\noindent
As a last step, we present some estimates for the distance
$\| y_\varrho-y_d \|_{L^2(Q)}$ of the regularized state $y_\varrho$ from
the target $y_d$, which only depends on the regularization parameter $\varrho$,
and on the regularity of the target.

\begin{lemma}\label{lem:regularization-error-estimates-L2}
  Let $y_d\in L^2(Q)$. For the unique solution
  $(p_\varrho,y_\varrho) \in H_{0;,0}^{1,1}(Q)\times H_{0;0,}^{1,1}(Q)$ of
  \eqref{eq:wave-continuous-VF-optimality-system-common-L2}
  there holds 
  \begin{equation}\label{eq:reg-error-estimate-L2:L2-L2}
    \| y_\varrho-y_d \|_{L^2(Q)} \leq \| y_d \|_{L^2(Q)}.
  \end{equation}
  If in addition $y_d\in H_{0;0,}^{1,1}(Q)$ such that $\Box y_d \in L^2(Q)$,
  then 
  \begin{equation}\label{eq:reg-error-estimate-L2:L2-H2}
    \| y_\varrho-y_d \|_{L^2(Q)} \leq
    \sqrt{\varrho} \, \| \Box y_d \|_{L^2(Q)}.
  \end{equation}
  Moreover, we also have
  \begin{equation}\label{eq:reg-error-estimate-L2:H2-H2}
    \| \Box y_\varrho \|_{L^2(Q)}\leq \| \Box y_d \|_{L^2(Q)}.
  \end{equation}
\end{lemma}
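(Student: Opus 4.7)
The plan is to exploit the minimizer characterization of $u_\varrho$: since $u_\varrho$ minimizes the reduced cost functional $\widehat{\mathcal{J}}$ over $L^2(Q)$, for every competitor $v \in L^2(Q)$ with associated state $\mathcal{S}v$ we have the energy inequality
\[
\frac{1}{2}\,\|y_\varrho - y_d\|_{L^2(Q)}^2 + \frac{\varrho}{2}\,\|u_\varrho\|_{L^2(Q)}^2 \leq \frac{1}{2}\,\|\mathcal{S}v - y_d\|_{L^2(Q)}^2 + \frac{\varrho}{2}\,\|v\|_{L^2(Q)}^2.
\]
All three estimates will follow by a suitable choice of $v$, combined with the state equation $\Box y_\varrho = u_\varrho$.

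For \eqref{eq:reg-error-estimate-L2:L2-L2}, I would take the trivial competitor $v := 0$ with $\mathcal{S}0 = 0$. Dropping the non-negative regularization contribution on the left-hand side of the energy inequality immediately yields $\|y_\varrho - y_d\|_{L^2(Q)}^2 \leq \|y_d\|_{L^2(Q)}^2$, which is \eqref{eq:reg-error-estimate-L2:L2-L2}.

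For \eqref{eq:reg-error-estimate-L2:L2-H2} and \eqref{eq:reg-error-estimate-L2:H2-H2}, under the additional regularity $y_d \in H_{0;0,}^{1,1}(Q)$ with $\Box y_d \in L^2(Q)$, I would choose $v := \Box y_d \in L^2(Q)$. The crucial point is that $\mathcal{S}(\Box y_d) = y_d$: since $y_d$ already carries the homogeneous Dirichlet datum on $\Sigma$ and the zero initial datum on $\Sigma_0$ encoded in the trial space $H_{0;0,}^{1,1}(Q)$, and since $\Box y_d \in L^2(Q)$ is admissible as a right-hand side, the function $y_d$ itself solves the variational forward problem \eqref{eq:wave-VF-L2} with source $\Box y_d$; uniqueness then forces $\mathcal{S}(\Box y_d) = y_d$. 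With this choice the tracking term on the right of the energy inequality vanishes, leaving
\[
\frac{1}{2}\,\|y_\varrho - y_d\|_{L^2(Q)}^2 + \frac{\varrho}{2}\,\|u_\varrho\|_{L^2(Q)}^2 \leq \frac{\varrho}{2}\,\|\Box y_d\|_{L^2(Q)}^2.
\]
Discarding the non-negative $\|u_\varrho\|_{L^2(Q)}^2$-term on the left yields \eqref{eq:reg-error-estimate-L2:L2-H2}; discarding instead the tracking term and invoking $\Box y_\varrho = u_\varrho$ yields \eqref{eq:reg-error-estimate-L2:H2-H2}.

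The only delicate step in this plan is the identification $\mathcal{S}(\Box y_d) = y_d$, which is precisely the reason one insists on $y_d \in H_{0;0,}^{1,1}(Q)$ in addition to $\Box y_d \in L^2(Q)$, so that the initial and boundary traces of the target match those demanded by the forward problem. Once this compatibility is in hand, the three estimates are one-line consequences of the minimizer inequality combined with the state equation, so no further work is required.
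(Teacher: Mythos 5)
Your proposal is correct, but it proves the lemma by a genuinely different route than the paper. You work directly with the minimization property, comparing $\widehat{\mathcal{J}}(u_\varrho)$ against the competitors $v=0$ and $v=\Box y_d$, whereas the paper never leaves the optimality system \eqref{eq:wave-continuous-VF-optimality-system-common-L2}: it tests with $q=p_\varrho$, $z=y_\varrho$ to get the identity $\|y_\varrho-y_d\|^2_{L^2(Q)}+\varrho^{-1}\|p_\varrho\|^2_{L^2(Q)}=\langle y_d-y_\varrho,y_d\rangle_{L^2(Q)}$ for \eqref{eq:reg-error-estimate-L2:L2-L2}, and with $z=y_d-y_\varrho$ together with $p_\varrho=-\varrho\,\Box y_\varrho$ to get $\|y_d-y_\varrho\|^2_{L^2(Q)}+\varrho\,\|\Box y_\varrho\|^2_{L^2(Q)}=\varrho\,\langle\Box y_d,\Box y_\varrho\rangle_{L^2(Q)}$, from which \eqref{eq:reg-error-estimate-L2:L2-H2} and \eqref{eq:reg-error-estimate-L2:H2-H2} follow via Cauchy--Schwarz. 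Your comparison argument is more elementary and delivers all three bounds from two one-line inequalities; the paper's testing argument yields exact energy identities that are slightly sharper than what the comparison inequality gives and that serve as the template for the discrete analogue in Theorem \ref{thm:l2-reg:fem-estiamtes}, where the continuous minimization argument is not available in the same form. The one step you rightly flag as delicate, namely $\mathcal{S}(\Box y_d)=y_d$, is on exactly the same footing as the paper's implicit use of $b(y_d,q)=\langle\Box y_d,q\rangle_{L^2(Q)}$ in the identity $b(y_d,p_\varrho)=-\varrho\,\langle\Box y_d,\Box y_\varrho\rangle_{L^2(Q)}$: both require that the natural condition $\partial_t y_d=0$ on $\Sigma_0$ be encoded in the hypothesis ``$y_d\in H^{1,1}_{0;0,}(Q)$ with $\Box y_d\in L^2(Q)$'', so you introduce no gap that the paper does not already accept.
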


\begin{proof}
  Firstly, let $y_d\in L^2(Q)$. Testing
  \eqref{eq:wave-continuous-VF-optimality-system-common-L2} with
  $q = p_\varrho$ and $z=y_\varrho$, we obtain
  \begin{equation*}
    \langle y_\varrho-y_d,y_\varrho \rangle_{L^2(Q)}  =
    b(y_\varrho,p_\varrho) = - \varrho^{-1} \, \| p_\varrho \|_{L^2(Q)}^2, 
  \end{equation*}
  from which we further deduce, using a Cauchy--Schwarz inequality, that
  \begin{equation*}
    \| y_\varrho-y_d \|_{L^2(Q)}^2 + \varrho^{-1} \, \| p_\varrho \|_{L^2(Q)}^2
    = \langle y_d-y_\varrho,y_d \rangle_{L^2(Q)} \leq
    \| y_\varrho-y_d \|_{L^2(Q)}\| y_d \|_{L^2(Q)},
  \end{equation*}
  which gives \eqref{eq:reg-error-estimate-L2:L2-L2}. 
  If now $y_d\in H_{0;0,}^{1,1}(Q)$ such that $\Box y_d\in L^2(Q)$, we can
  test \eqref{eq:wave-continuous-VF-optimality-system-common-L2} with
  $z=y_d-y_\varrho$, and using the relations \eqref{eq:wave-PDE-constraint}
  and \eqref{eq:wave-gradient-equation}, i.e.,
  $p_\varrho = -\varrho \, \Box y_\varrho$, we get 
  \begin{eqnarray*}
    \| y_d-y_\varrho \|_{L^2(Q)}^2
    & = & \langle y_d-y_\varrho,y_d-y_\varrho\rangle_{L^2(Q)} =
          b(y_\varrho-y_d,p_\varrho) 
          = b(y_\varrho,p_\varrho) - b(y_d,p_\varrho) \\
    & = & -\varrho \, \langle \Box y_\varrho, \Box y_\varrho \rangle_{L^2(Q)} +
          \varrho \, \langle \Box y_d,\Box y_\varrho \rangle_{L^2(Q)}.  
  \end{eqnarray*}
  Reordering and applying a Cauchy--Schwarz inequality, this gives 
  \begin{equation*}
    \| y_d-y_\varrho \|_{L^2(Q)}^2 +
    \varrho \, \| \Box y_\varrho \|_{L^2(Q)}^2 =
    \varrho \, \langle \Box y_d,\Box y_\varrho \rangle_{L^2(Q)} \leq
    \varrho \, \| \Box y_d \|_{L^2(Q)} \| \Box y_\varrho \|_{L^2(Q)},
  \end{equation*}
  from which \eqref{eq:reg-error-estimate-L2:H2-H2} and
  \eqref{eq:reg-error-estimate-L2:L2-H2} follow. 
\end{proof}

\begin{corollary}
  From the gradient equation \eqref{gradient equation u p}, the primal
  wave equation in \eqref{eq:wave-PDE-constraint}, and
  \eqref{eq:reg-error-estimate-L2:H2-H2}, we conclude
  \begin{equation}\label{Estimate p L2 H2}
    \| p_\varrho \|_{L^2(Q)} = \varrho \, \| u_\varrho \|_{L^2(Q)} =
    \varrho \, \| \Box y_\varrho \|_{L^2(Q)} \leq
    \varrho \, \| \Box y_d \|_{L^2(Q)},
  \end{equation}
  while from the wave equation in \eqref{eq:wave-adjoint-equation} and
  using \eqref{eq:reg-error-estimate-L2:L2-H2}, this gives
  \begin{equation}\label{Estimate p H2 H2}
    \| \Box p_\varrho \|_{L^2(Q)} =
    \| y_\varrho - y_d \|_{L^2(Q)} \leq \sqrt{\varrho} \,
    \| \Box y_d \|_{L^2(Q)}.
  \end{equation}
\end{corollary}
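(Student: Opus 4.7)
The corollary is essentially a direct chaining of three already-established facts, so no new estimate is needed. The plan is as follows.

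First, I would handle the $L^2$-bound on $p_\varrho$. The gradient equation \eqref{gradient equation u p} immediately gives $\|p_\varrho\|_{L^2(Q)} = \varrho\,\|u_\varrho\|_{L^2(Q)}$. Next, since by the primal wave equation \eqref{eq:wave-PDE-constraint} the control satisfies $u_\varrho = \Box y_\varrho$ pointwise in $L^2(Q)$, I can substitute to get $\|u_\varrho\|_{L^2(Q)} = \|\Box y_\varrho\|_{L^2(Q)}$. Finally, applying the previously established estimate \eqref{eq:reg-error-estimate-L2:H2-H2} yields the claimed bound by $\varrho\,\|\Box y_d\|_{L^2(Q)}$.

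For the second chain, I would read off $\Box p_\varrho = y_\varrho - y_d$ directly from the adjoint equation \eqref{eq:wave-adjoint-equation} (taking $L^2$-norms on both sides), and then invoke \eqref{eq:reg-error-estimate-L2:L2-H2} to bound the right-hand side by $\sqrt{\varrho}\,\|\Box y_d\|_{L^2(Q)}$.

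There is no real obstacle here: the only thing to check is that the regularity hypotheses of Lemma~\ref{lem:regularization-error-estimates-L2} (namely $y_d \in H_{0;0,}^{1,1}(Q)$ with $\Box y_d \in L^2(Q)$) are in force so that \eqref{eq:reg-error-estimate-L2:L2-H2} and \eqref{eq:reg-error-estimate-L2:H2-H2} can be applied; I would state this as a standing assumption at the start of the proof. Everything else is a single-line substitution, so the proof will consist of two short displays mirroring the statement.
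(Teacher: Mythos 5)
Your proposal is correct and coincides with the paper's own (inline) justification: the first chain follows from $p_\varrho=-\varrho\,u_\varrho$ and $u_\varrho=\Box y_\varrho$ together with \eqref{eq:reg-error-estimate-L2:H2-H2}, and the second from $\Box p_\varrho=y_\varrho-y_d$ together with \eqref{eq:reg-error-estimate-L2:L2-H2}. Your remark that the hypotheses $y_d\in H^{1,1}_{0;0,}(Q)$ with $\Box y_d\in L^2(Q)$ must be in force is a sensible (and implicitly assumed) point.
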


\begin{proposition}\label{prop:regularization-interpolation-estimates-L2}
  The error estimates  \eqref{eq:reg-error-estimate-L2:L2-H2} and
  \eqref{eq:reg-error-estimate-L2:H2-H2} as well as \eqref{Estimate p L2 H2}
  and \eqref{Estimate p H2 H2} may motivate the use of a space interpolation
  argument in order to derive related error estimates in $H^1(Q)$.
  Unfortunately, this does not hold true in general. At this point,
  we therefore assume that the given data are such that the following
  regularization error estimates hold true, i.e.,
    \begin{equation}\label{Estimate y H1 H2}
    |y_\varrho - y_d |_{H^1(Q)} \leq c \, \varrho^{1/4} \,
    \| \Box y_d \|_{L^2(Q)},
  \end{equation}
  and
  \begin{equation}\label{Estimate p H1 H2}
    | p_\varrho |_{H^1(Q)} \leq c \, \varrho^{3/4} \, \| \Box y_d \|_{L^2(Q)} .
  \end{equation}
\ifnum\pub=0   
All our numerical experiments performed for smooth targets confirm the
estimates \eqref{Estimate y H1 H2} and \eqref{Estimate p H1 H2}.
\fi  
\ifnum\pub=1   
All our numerical experiments performed for smooth targets confirm the
estimates \eqref{Estimate y H1 H2} and \eqref{Estimate p H1 H2}; see
Appendix.
\fi 

\end{proposition}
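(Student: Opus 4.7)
The plan is to attempt the space interpolation argument hinted at in the statement itself, make transparent where it breaks down for the hyperbolic operator $\Box$, and then outline which additional hypotheses on $y_d$ would upgrade \eqref{Estimate y H1 H2}--\eqref{Estimate p H1 H2} from assumptions to theorems. First I would collect the endpoints. Setting $w := y_\varrho - y_d$, Lemma~\ref{lem:regularization-error-estimates-L2} together with a triangle inequality yields $\|w\|_{L^2(Q)} \leq \sqrt{\varrho}\,\|\Box y_d\|_{L^2(Q)}$ and $\|\Box w\|_{L^2(Q)} \leq 2\,\|\Box y_d\|_{L^2(Q)}$, while the corollary gives $\|p_\varrho\|_{L^2(Q)} \leq \varrho\,\|\Box y_d\|_{L^2(Q)}$ and $\|\Box p_\varrho\|_{L^2(Q)} \leq \sqrt{\varrho}\,\|\Box y_d\|_{L^2(Q)}$. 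The exponents $\varrho^{1/4}$ and $\varrho^{3/4}$ appearing on the right-hand sides of the target estimates are precisely the geometric means one obtains by a $1/2$-interpolation between these endpoints.

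The principal obstacle is that, for the wave operator, the graph space $H_\Box^2(Q) := \{v \in L^2(Q) : \Box v \in L^2(Q)\}$ is strictly larger than $H^2(Q)$: unlike the elliptic situation, control of $\Box v$ in $L^2$, even with favourable boundary and initial/terminal data, does not control the mixed second derivative $\partial_t \nabla_x v$, so the real interpolation space $[L^2(Q), H_\Box^2(Q)]_{1/2}$ does not embed into $H^1(Q)$. Consequently an abstract interpolation theorem cannot extract an $|\cdot|_{H^1(Q)}$-bound from the two endpoint estimates alone. This is precisely the caveat \emph{``this does not hold true in general''} appearing in the statement.

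To close the gap I would pursue one of two routes. The first is to strengthen the upper endpoint: assuming $y_d \in H^2(Q)$ with compatibility at $\Sigma_0$ and $\Sigma_T$, an energy argument obtained by differentiating the primal and adjoint wave equations once in time and testing suitably should promote $\Box$-regularity to genuine $H^2(Q)$-regularity of $w$ and $p_\varrho$, after which classical real interpolation between $L^2(Q)$ and $H^2(Q)$ delivers the desired $H^1(Q)$-bounds with the claimed $\varrho$-rates. The second is a Fourier approach in the spatial Dirichlet eigenbasis of $-\Delta_x$, reducing the estimates to mode-by-mode second-order ODE bounds that are then summed against the eigenvalues. The hard part, and the reason the authors elevate \eqref{Estimate y H1 H2}--\eqref{Estimate p H1 H2} to a hypothesis rather than a theorem, is that the adjoint pair inherits hidden compatibility conditions at $\Sigma_T$ through the optimality coupling, and these conditions are awkward to state sharply for a general target; the authors therefore corroborate the assumption numerically for the smooth targets used in the experiments, as reported in the appendix.
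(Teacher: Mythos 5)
Your proposal matches the paper's treatment: the proposition is stated as an assumption, not a theorem, and your endpoint bookkeeping ($\|y_\varrho-y_d\|_{L^2(Q)}\leq\sqrt{\varrho}\,\|\Box y_d\|_{L^2(Q)}$, $\|\Box(y_\varrho-y_d)\|_{L^2(Q)}\leq 2\|\Box y_d\|_{L^2(Q)}$, and the analogous pair for $p_\varrho$, with $\varrho^{1/4}$ and $\varrho^{3/4}$ as the geometric means) together with the observation that the graph space of $\Box$ does not interpolate into $H^1(Q)$ is exactly the reasoning the authors give in Remark~\ref{rem:regularization-interpolation-estimates-L2}, where the spatial Dirichlet eigenfunction expansion you mention is used to exhibit modes $V_k$ for which the constant in the would-be interpolation inequality \eqref{space interpolation} blows up like $\sqrt{\mu_k}$. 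The only substantive difference is that the paper stops there and supports the assumption numerically (see the Appendix), whereas you additionally sketch hypothetical routes to upgrade it to a theorem; these are not carried out in the paper and would require the compatibility conditions you correctly flag as the obstruction.
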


\begin{remark}\label{rem:regularization-interpolation-estimates-L2}
  The regularization error estimates \eqref{Estimate y H1 H2} and
  \eqref{Estimate p H1 H2} are a simple consequence of the space
  interpolation type estimate
  \begin{equation}\label{space interpolation}
    \| v \|^2_{H^1(Q)} \leq c \, \| \Box v \|_{L^2(Q)} \| v \|_{L^2(Q)}
  \end{equation}
  for all $v \in H^1(Q)$ with $\Box v \in L^2(Q)$ and
  $v=\partial_tv=0$ on $\Sigma_0$. In order to prove
  \eqref{space interpolation} we can use the normalized eigenfunctions
  $\phi_k \in H^1_0(\Omega)$ with eigenvalues $\mu_k$ of the spatial
  Dirichlet eigenvalue problem for the Laplacian to write
  \[
    v(x,t) = \sum\limits_{k=1}^\infty V_k(t) \phi_k(x) \, , \quad
    V_k(0)=V_k'(0)=0,
  \]
  and it turns out that \eqref{space interpolation} is a consequence
  of the estimate
  \begin{equation}\label{lokal space interpolation}
    \| V_k' \|^2_{L^2(0,T)} + \mu_k \, \| V_k \|^2_{L^2(0,T)} \leq c \,
    \| V_k'' + \mu_k \, V_k \|_{L^2(0,T)} \| V_k \|_{L^2(0,T)},
    \; k \in {\mathbb{N}} .
  \end{equation}
  In particular, for, see {\rm  \cite[Theorem 4.2.6]{LLSY:Zank:2020}},
  \[
    V_k(t) := \frac{1}{\sqrt{T^3}} \int_0^t s \, \sin (\sqrt{\mu_k}s) \, ds
    \quad \mbox{for} \; t \in [0,T]
  \]
  we conclude $c^{-1}=\mathcal{O}(\sqrt{1/\mu_k})$, and thus
  $c \to \infty$ as $k \to \infty$. 
  Hence, this analysis indicates that the regularization error estimates
  \eqref{Estimate y H1 H2} and \eqref{Estimate p H1 H2} are only violated
  when high-oscillating contributions appear.
\end{remark}
\subsection{The energy regularization in
  $U=P^\ast=[H_{0;,0}^{1,1}(Q)]^\ast$}
\label{Subsec:wave-h1-reg}
We note that so far we needed $u_\varrho \in L^2(Q)$ to admit a unique
solution of the variational formulation \eqref{eq:wave-VF-L2}. As we test
\eqref{eq:wave-VF-L2} with functions $q \in P = H_{0;,0}^{1,1}(Q)$, a natural
question to appear is, whether we can also choose the control in the dual
space $u_\varrho\in P^\ast = [H_{0;,0}^{1,1}(Q)]^\ast$. But, as it turns out,
the operator $B:H_{0;0,}^{1,1}(Q)\to P^\ast$ as implied by the bilinear form
$b(y,q) = \langle B y , q \rangle_Q$ for all $y \in H^{1,1}_{0;0,}(Q)$ and
for all $q \in H^{1,1}_{0;,0}(Q)$ does not define an isomorphism, see
\cite[Theorem 1.1]{LLSY:SteinbachZank:2022}. Recapitulating the work of
\cite{LLSY:SteinbachZank:2022}, see
also \cite{LLSY:LoescherSteinbach:2024SINUM},
we will define suitable spaces, such that the wave operator is an isomorphism.
The first issue to overcome is the establishment of an inf-sup condition,
guaranteeing the injectivity of the operator. It fails to hold in the
above setting, since the
initial condition $\partial_t y_\varrho(x,t)\big|_{t=0}$ enters the variational
formulation naturally, which is not appropriate in this case. In order to
incorporate it in a meaningful sense, we will modify the state space.
Let $Q_- := \Omega \times (-T,T)$ denote the enlarged space-time domain,
and define the zero extension of a function $y\in L^2(Q)$ by
\[
\widetilde{y}(x,t) :=
\begin{cases}
	y(x,t) & \mbox{for} \; (x,t)\in Q, \\
	0, & \mbox{else.}
\end{cases}
\]
Then, we consider the application of the wave operator in a distributional
sense, i.e., for all $\varphi\in C_0^\infty(Q_-)$ we define
\[
\langle \Box \widetilde{y}, \varphi \rangle_{Q_-}
:=
\int_{Q_-} \widetilde{y}(x,t) \, \Box \varphi(x,t) \, dx \, dt =
\int_Q y(x,t) \, \Box \varphi(x,t) \, dx \, dt.  
\]
Using this definition, we can introduce the space 
\[
\mathcal{H}(Q) := \Big \{
y=\widetilde{y}_{|_{Q}} : \widetilde{y} \in L^2(Q_-) , \;
\widetilde{y}_{|\Omega \times (-T,0)}=0, \;
\Box \widetilde{y} \in [H_0^1(Q_-)]^* \Big \},
\]
with the graph norm
$\|y\|_{\mathcal{H}(Q)} :=
(\|y\|_{L^2(Q)}^2 + \|\Box \widetilde{y} \|_{[H_0^1(Q_-)]^*}^2)^{1/2}$.
The normed vector space $(\mathcal{H}(Q),\|\cdot\|_{\mathcal{H}(Q)})$ is a
Banach space, with $H_{0;0,}^{1,1}(Q)\subset \mathcal{H}(Q)$;
see \cite[Lemma 3.5]{LLSY:SteinbachZank:2022}.
Therefore, we can introduce the space
\[
Y:=\mathcal{H}_{0;0,}(Q) :=
\overline{H_{0;0,}^{1,1}(Q)}^{\| \cdot \|_{\mathcal{H}(Q)}} \subset
\mathcal{H}(Q),
\]
which will serve as state space. For $y \in Y$, an equivalent norm is
given by
\[
\| y \|_{Y} = \| \Box \widetilde{y}\|_{[H^1_0(Q_-)]^*},
\]
see \cite[Lemma 3.6]{LLSY:SteinbachZank:2022}. It turns out that 
$ B : {\mathcal{H}}_{0;0,}(Q) \to [H^{1,1}_{0;,0}(Q)]^\ast $ defined by
$\langle B y , q \rangle_Q = \langle \Box \widetilde{y} ,
{\mathcal{E}}q \rangle_{Q_-}$ for all
$(y,q) \in {\mathcal{H}}_{0;0,}(Q) \times H^{1,1}_{0;,0}(Q)$, and using
some bounded extension
${\mathcal{E}} :H^{1,1}_{0;,0}(Q) \to H^1_0(Q_-)$, e.g., reflection in time,
is an isomorphism; 
see \cite[Lemma 3.5, Theorem 3.9]{LLSY:SteinbachZank:2022}. Moreover, 
we have
\begin{equation}\label{eq:evaluation-wave-bilinear-form}
  \langle \Box \widetilde{y} , {\mathcal{E}}q \rangle_{Q_-} =
  - \langle \partial_t y , \partial_t q \rangle_{L^2(Q)} 
  + \langle \nabla_x y , \nabla_x q \rangle_{L^2(Q)} 
\end{equation}
for $y \in H^{1,1}_{0;0,}(Q)$ and $q \in H^{1,1}_{0;,0}(Q)$, 
which in particular applies when considering conforming space-time
finite element spaces $Y_h \subset H^{1,1}_{0;0,}(Q) \subset Y$,
and $P_h \subset H^{1,1}_{0;,0}(Q)$.

For any given $u_\varrho\in P^\ast = [H^{1,1}_{0;,0}(Q)]^\ast$, we now find
a unique $y_\varrho \in Y = {\mathcal{H}}_{0;0,}(Q)$ satisfying
\begin{equation}\label{eq:generalized-VF-wave}
  \langle B y_\varrho , q \rangle_Q =
  \langle \Box \widetilde{y}_\varrho , {\mathcal{E}} q \rangle_{Q_-} =
  \langle u_\varrho , q \rangle_Q \quad
  \mbox{for all} \; q \in H^{1,1}_{0;,0}(Q).
\end{equation}
Thus, we might consider the reduced cost functional 
\begin{equation}\label{eq:cost-functional-energy}
  \widetilde{\mathcal{J}}(y_\varrho) =
  \frac{1}{2} \, \| y_\varrho-y_d \|_{L^2(Q)}^2 +
  \frac{\varrho}{2} \, \| By_\varrho \|_{P^\ast}^2. 
\end{equation} 
To realize the norm of the dual space $P^\ast$, we make use of the
following auxiliary Riesz operator $ A : P \to P^\ast$ defined by
\begin{equation}\label{eq:wave-A-operator}
  \langle Ap,q \rangle_Q :=
  \langle \partial_t p,\partial_t q \rangle_{L^2(Q)} +
  \langle \nabla _x p,\nabla_x q \rangle_{L^2(Q)}
  \quad \text{for all } p,q\in P;
\end{equation}
see also \cite{LLSY:LoescherSteinbach:2024SINUM}.
With this, the reduced cost functional becomes 
\[
  \widetilde{\mathcal{J}}(y_\varrho) 
  =
  \frac{1}{2} \, \langle y_\varrho-y_d,y_\varrho-y_d \rangle_{L^2(Q)} +
  \frac{\varrho}{2} \,
  \langle A^{-1} By_\varrho, By_\varrho \rangle_Q,
\]
for which the minimizer is characterized as the solution $y_\varrho\in Y$
of the gradient equation
\begin{equation}\label{eq:wave-VF-Schur-complement}
  \varrho \, B^\ast A^{-1} B y_\varrho + y_\varrho = y_d \quad \text{in } Y^\ast. 
\end{equation}
Note that the operator $S:=B^\ast A^{-1} B : Y\to Y^\ast$ is an isomorphism,
since $A : P \to P^\ast$, and $B : Y \to P^\ast$ are isomorphic and therefore,
\eqref{eq:wave-VF-Schur-complement} admits a unique solution. In particular,
see \cite[Lemma 3.1]{LLSY:LoescherSteinbach:2024SINUM},
$S:Y\to Y^\ast$ is bounded, self-adjoint, and $Y$-elliptic, and thus
defines an equivalent norm 
\begin{equation}\label{eq:wave-norm-equivalence}
  \| y \|_Y \leq \| y \|_S :=
  \sqrt{\langle Sy,y \rangle_Q} \leq
  2 \, \| y \|_Y \quad \text{for all }y\in Y. 
\end{equation} 
Depending on the regularity of the target $y_d$ and on the regularization
parameter $\varrho > 0$, we can show the following
regularization error estimates.

\begin{lemma}[{\cite[Theorem 3.2]{LLSY:LoescherSteinbach:2024SINUM}}]
  \label{lem:wave-regularization-error-estimates}
  Let $y_d \in L^2(Q)$ be given. For the unique solution $y_\varrho \in Y$
  of \eqref{eq:wave-VF-Schur-complement} there holds 
  \begin{align}\label{eq:wave-rho-H-H-estimate}
    \| y_\varrho -y_d \|_{L^2(Q)} \leq \| y_d \|_{L^2(Q)}.
  \end{align}
  Further, if $y_d\in Y$, then 
  \begin{align}\label{eq:wave-rho-HandX-X-estimate}
    \| y_\varrho -y_d \|_{L^2(Q)} \leq \sqrt{\varrho} \,
    \| y_d \|_S, \quad \text{and} \quad
    \| y_\varrho-y_d \|_S \leq \| y_d \|_S.
  \end{align}
  Moreover, it holds
  \begin{equation}\label{eq:wave-rho-XandX-stability}
    \| y_\varrho \|_S \leq \| y_d \|_S.
  \end{equation}
  At last, if $y_d\in Y$ such that $Sy_d\in L^2(Q)$, then it holds
  \begin{align}\label{eq:wave-rho-HandX-Syd-estimate}
    \| y_\varrho -y_d \|_{L^2(Q)} \leq \varrho \, \| Sy_d \|_{L^2(Q)},
    \quad \text{and} \quad \| y_\varrho-y_d \|_S \leq
    \sqrt{\varrho} \, \| Sy_d \|_{L_2(Q)},
  \end{align}
  as well as
  \begin{equation}\label{eq:wave-rho-SandS-stability}
    \| Sy_\varrho \|_{L^2(Q)}\leq \| Sy_d \|_{L^2(Q)}.
  \end{equation}
\end{lemma}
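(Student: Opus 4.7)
The plan is to transfer the argument of Lemma~\ref{lem:regularization-error-estimates-L2} to the energy-regularization setting, using as working identity the gradient equation \eqref{eq:wave-VF-Schur-complement}, namely $\varrho\,Sy_\varrho+y_\varrho=y_d$ in $Y^\ast$. By \cite[Lemma 3.1]{LLSY:LoescherSteinbach:2024SINUM} together with \eqref{eq:wave-norm-equivalence}, the Schur-complement operator $S=B^\ast A^{-1}B$ is bounded, self-adjoint and $Y$-elliptic, so that $\langle u,v\rangle_S:=\langle Su,v\rangle_Q$ defines an inner product on $Y$, with the associated Cauchy--Schwarz inequality $|\langle Su,v\rangle_Q|\le \|u\|_S\,\|v\|_S$ available.

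The first three estimates I would derive by comparison inside the reduced cost functional. Since $y_\varrho$ minimizes $\widetilde{\mathcal{J}}(y)=\tfrac{1}{2}\|y-y_d\|_{L^2(Q)}^2+\tfrac{\varrho}{2}\|y\|_S^2$ over $Y$, the admissible choice $y=0$ yields $\|y_\varrho-y_d\|_{L^2(Q)}^2+\varrho\,\|y_\varrho\|_S^2\le \|y_d\|_{L^2(Q)}^2$, hence \eqref{eq:wave-rho-H-H-estimate}. Under the stronger assumption $y_d\in Y$, the choice $y=y_d$ yields $\|y_\varrho-y_d\|_{L^2(Q)}^2+\varrho\,\|y_\varrho\|_S^2\le \varrho\,\|y_d\|_S^2$, from which both the stability estimate \eqref{eq:wave-rho-XandX-stability} and the $L^2$-part of \eqref{eq:wave-rho-HandX-X-estimate} follow immediately.

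For the $S$-norm part of \eqref{eq:wave-rho-HandX-X-estimate} and for the sharper bounds \eqref{eq:wave-rho-HandX-Syd-estimate}, I would subtract $\varrho\,Sy_d+y_d$ from both sides of the gradient equation to produce the shifted identity $\varrho\,S(y_\varrho-y_d)+(y_\varrho-y_d)=-\varrho\,Sy_d$, and then test with $y_\varrho-y_d\in Y$ to obtain
\[
\varrho\,\|y_\varrho-y_d\|_S^2+\|y_\varrho-y_d\|_{L^2(Q)}^2 = -\varrho\,\langle Sy_d,y_\varrho-y_d\rangle_Q.
\]
Under only $y_d\in Y$, the right-hand side is bounded by $\varrho\,\|y_d\|_S\,\|y_\varrho-y_d\|_S$ via Cauchy--Schwarz in the $S$-inner product; dividing by $\|y_\varrho-y_d\|_S$ gives $\|y_\varrho-y_d\|_S\le \|y_d\|_S$. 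Under the stronger hypothesis $Sy_d\in L^2(Q)$, the same right-hand side is bounded in $L^2$ by $\varrho\,\|Sy_d\|_{L^2(Q)}\,\|y_\varrho-y_d\|_{L^2(Q)}$; from the $L^2$-term alone one reads off $\|y_\varrho-y_d\|_{L^2(Q)}\le \varrho\,\|Sy_d\|_{L^2(Q)}$, and inserting this back into the $S$-term produces $\|y_\varrho-y_d\|_S\le \sqrt{\varrho}\,\|Sy_d\|_{L^2(Q)}$, giving \eqref{eq:wave-rho-HandX-Syd-estimate}. Finally, rearranging the gradient equation as $Sy_\varrho=\varrho^{-1}(y_d-y_\varrho)$ and applying the just-established $L^2$-estimate immediately yields \eqref{eq:wave-rho-SandS-stability}.

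No single step is technically difficult; the craftsmanship lies in the bookkeeping, namely in selecting the test function (the minimizer candidate in the cost functional, or $y_\varrho-y_d$ in the shifted identity) and the form of Cauchy--Schwarz ($S$-inner product or $L^2$) so that the resulting bound matches the regularity actually available on $y_d$. The parallel with the proof of Lemma~\ref{lem:regularization-error-estimates-L2}, where $b(\cdot,\cdot)$ and the identity $p_\varrho=-\varrho\,\Box y_\varrho$ play the roles now taken by $\langle S\cdot,\cdot\rangle_Q$ and $\varrho\,Sy_\varrho=y_d-y_\varrho$, is what guides the choice throughout.
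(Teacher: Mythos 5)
Your proof is correct. Note that the paper itself does not prove this lemma but imports it from \cite[Theorem 3.2]{LLSY:LoescherSteinbach:2024SINUM}; the natural in-paper template is the proof of Lemma~\ref{lem:regularization-error-estimates-L2}, which proceeds by testing the optimality system with suitable functions and applying Cauchy--Schwarz. Your argument follows that template for the harder estimates: the shifted identity $\varrho S(y_\varrho-y_d)+(y_\varrho-y_d)=-\varrho Sy_d$ tested with $y_\varrho-y_d$ is exactly the energy-regularization analogue of testing with $z=y_d-y_\varrho$ there, and your two applications of Cauchy--Schwarz (in the $S$-inner product for $y_d\in Y$, in $L^2$ for $Sy_d\in L^2(Q)$) correctly yield \eqref{eq:wave-rho-HandX-X-estimate}$_2$, \eqref{eq:wave-rho-HandX-Syd-estimate}, and, via $Sy_\varrho=\varrho^{-1}(y_d-y_\varrho)$, \eqref{eq:wave-rho-SandS-stability}. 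Where you genuinely deviate is in deriving \eqref{eq:wave-rho-H-H-estimate}, \eqref{eq:wave-rho-XandX-stability} and \eqref{eq:wave-rho-HandX-X-estimate}$_1$ from the comparisons $\widetilde{\mathcal{J}}(y_\varrho)\le\widetilde{\mathcal{J}}(0)$ and $\widetilde{\mathcal{J}}(y_\varrho)\le\widetilde{\mathcal{J}}(y_d)$ rather than from testing the gradient equation; this is shorter and avoids Cauchy--Schwarz entirely for those three estimates, at the mild cost of invoking the equivalence between the variational characterization \eqref{eq:wave-VF-Schur-complement} and the minimization of \eqref{eq:cost-functional-energy}, which the paper has already established. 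Both routes are sound and give the same constants.
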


\noindent
From the above results, and using a space interpolation argument, we
conclude the following estimate, see
\cite[Corollary 3.3]{LLSY:LoescherSteinbach:2024SINUM}:
Let $y_d\in H^{s,s}_{0;0,}(Q) := [H^{1,1}_{0;0,}(Q),L^2(Q)]_s$, for
$s\in[0,1]$, or $y_d \in H^s(Q) \cap H^{1,1}_{0;0,}(Q)$ such that
$Sy_d\in H^{s-2}(Q)$ for $s\in [1,2]$. Then, 
\begin{equation}\label{eq:wave-interpolation-L2Hs-eq1}
  \| y_d-y_\varrho \|_{L^2(Q)} \leq
  c \, \varrho^{s/2} \, \| y_d \|_{H^s(Q)}, \quad s\in[0,2].
\end{equation}

\begin{remark}\label{rem:lower-order-convergence-energy-regularization}
  The operator $A = -\Delta_{(x,t)} : P = H_{0;,0}^{1,1}(Q) \to P^\ast$
  corresponds to the space-time Laplacian with mixed Dirichlet and Neumann
  boundary conditions. Therefore, the solution $p\in P$ of $Ap=u$ in
  $Q$ admits the regularity $p\in H^{r+1}(Q) \cap P$ for given
  $u\in H^{r-1}(Q)$, and some $0\leq r\leq 1$, depending on the geometry
  of the space-time domain, see, e.g.,
  \cite{LLSY:Dauge:1988,LLSY:Grisvard:1985}. In particular,
  for $y_d\in H^2(Q)\cap H_{0;0,}^{1,1}(Q)$ it holds that
  $B y_d\in L^2(Q)$. But, we can in general \emph{not}
  guarantee that $ A^{-1} By_d\in H^2(Q)$, and subsequently,
  $Sy_d = B^\ast A^{-1} By_d\in L^2(Q)$ does \emph{not} need to hold true.
  This loss of regularity might lead to lower convergence rates in the
  numerical examples. 
\end{remark}

\begin{remark}
  Instead of $Y=\mathcal{H}_{0;0,}(Q)$ and $P^\ast=[H_{0;,0}^{1,1}(Q)]^\ast$ we
  might as well consider the strong formulation of the wave equation with
  $P^\ast=L^2(Q)$, and a suitable ansatz space
  $\mathcal{Y} \subset H^{1,1}_{0;,0}(Q)$ such that the wave operator
  $B : \mathcal{Y} \to L^2(Q)$ is an isomorphism, see
  \cite[Section 4.3]{LLSY:Zank:2020}. Then, choosing
  $A:=\operatorname{id}:L^2(Q)\to L^2(Q)$, we can redo the above steps,
  deriving the optimality equation \eqref{eq:wave-VF-Schur-complement} with
  a bounded, self-adjoint, and elliptic operator
  $S:= B^\ast B:\mathcal{Y}\to \mathcal{Y}^\ast$, and related regularization
  error estimates depending on the regularity of the target, and on the
  parameter $\varrho>0$. In particular, the $L^2$ regularization corresponds
  to the concept of the energy regularization, if we consider the wave
  operator in a strong form.   	  
\end{remark}

%
%
\section{Space-time finite element discretization}
\label{Sec:SpaceTimeFiniteElementDiscretization}
From now on, let us assume that $\Omega \subset \mathbb{R}^d$ is either
polygonally ($d=2$) or polyhedrally ($d=3$) bounded.
Let $\mathcal{T}_h = \{ \tau_\ell \}_{\ell=1}^N$ be an admissible, globally
quasi-uniform decomposition of $Q$ into shape regular simplicial finite
elements $\tau_\ell \subset \mathbb{R}^{d+1}$ of mesh size
$h_\ell = |\tau_\ell|^{1/(d+1)}$, $\ell=1,\ldots,N$. Further,
let $h = \max_{\ell=1,\ldots,N}h_\ell$ denote the maximal mesh size.
For the Galerkin discretization of the above derived optimality equations,
we introduce conforming finite element spaces of, e.g., piecewise
linear and continuous basis functions,
\[
  Y_h = \text{span} \{ \varphi_k \}_{k=1}^{n_h} =
  S_h^1(\mathcal{T}_h) \cap H^{1.1}_{0;0,}(Q)
  \subset Y = \mathcal{H}_{0;0,}(Q),
\]
and 
\[
  P_h = \text{span} \{\psi_i \}_{i=1}^{m_h} = S_h^1(\mathcal{T}_h) \cap
  H^{1.1}_{0;,0}(Q) \subset P = H_{0;,0}^{1,1}(Q).
\]
In the following, we will formulate discrete variational formulations
for the optimality systems, and derive related error estimates, which will
enable us to link the regularization parameter $\varrho$ to the mesh
size $h$, revealing an asymptotically optimal choice $\varrho = h^4$, and
$\varrho=h^2$, in the case of the $L^2$ regularization and the energy
regularization in $P^\ast$, respectively. 

\subsection{The $L^2$ regularization $U=L^2(Q)$}
\label{SubSec:SpaceTimeFiniteElementDiscretization:L2}
In order to derive error estimates, it will be useful to first apply the
transformation $p_\varrho = \sqrt{\varrho} \, \widetilde{p}_\varrho$. Then,
\eqref{eq:wave-continuous-VF-optimality-system-common-L2} becomes:
find $(\widetilde{p}_\varrho,y_\varrho) \in P \times H_{0;0,}^{1,1}(Q)$ such
that   
\begin{equation}\label{eq:wave-transformed-VF-optimality-system-common-L2}
  \begin{array}{rcrcll}
    \displaystyle \frac{1}{\sqrt{\varrho}} \,
    \langle \widetilde{p}_{\varrho}, q \rangle_{L^2(Q)}
    & \hspace*{-2mm} + \hspace*{-2mm}
    & b(y_{\varrho },q)
    & \hspace*{-2mm} = \hspace*{-2mm} & 0,
    & \forall  q\in P, \\[2mm]
    -b(z,\widetilde{p}_{\varrho })
    & \hspace*{-2mm} + \hspace*{-2mm}
    & \displaystyle \frac{1}{\sqrt{\varrho}} \,
          \langle y_{\varrho },z \rangle_{L^2(Q)}
    & \hspace*{-2mm} = \hspace*{-2mm} &
          \displaystyle \frac{1}{\sqrt{\varrho}} \,
          \langle y_d,z \rangle_{L^2(Q)},
    & \forall  z\in H_{0;0,}^{1,1}(Q). 
  \end{array}
\end{equation}
The Galerkin variational formulation is then to find
$(\widetilde{p}_{\varrho h},y_{\varrho h})\in P_h\times Y_h$ such that   
 \begin{equation}\label{eq:wave-DVF-optimality-system-common-L2}
  \begin{array}{rcrcll}
    \displaystyle \frac{1}{\sqrt{\varrho}} \,
    \langle \widetilde{p}_{\varrho h}, q_h \rangle_{L^2(Q)}
    & \hspace*{-2mm} + \hspace*{-2mm}
    & b(y_{\varrho h},q_h)
    & \hspace*{-2mm} = \hspace*{-2mm} & 0,
    & \forall  q_h\in P_h, \\[2mm]
    -b(z_h,\widetilde{p}_{\varrho h})
    & \hspace*{-2mm} + \hspace*{-2mm}
    & \displaystyle \frac{1}{\sqrt{\varrho}} \,
          \langle y_{\varrho h},z_h \rangle_{L^2(Q)}
    & \hspace*{-2mm} = \hspace*{-2mm} &
          \displaystyle \frac{1}{\sqrt{\varrho}} \,
          \langle y_d,z_h \rangle_{L^2(Q)},
    & \forall  z_h \in Y_h. 
  \end{array}
\end{equation}

\begin{lemma}\label{lem:unique-solvability-L2-reg-DVF}
  For any $y_d \in L^2(Q)$, the Galerkin formulation
  \eqref{eq:wave-DVF-optimality-system-common-L2} admits a unique solution
  $(\widetilde{p}_{\varrho h},y_{\varrho h}) \in P_h\times Y_h$. 
\end{lemma}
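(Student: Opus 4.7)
The plan is to reduce the claim to a finite-dimensional linear algebra statement: the Galerkin system \eqref{eq:wave-DVF-optimality-system-common-L2} is square, with $m_h$ equations (one for each $q_h = \psi_i$) for the $m_h$ unknown coefficients of $\widetilde{p}_{\varrho h}$ and $n_h$ equations (one for each $z_h = \varphi_k$) for the $n_h$ coefficients of $y_{\varrho h}$, so existence will follow from uniqueness. Hence it suffices to show that the only solution for $y_d = 0$ is $(\widetilde{p}_{\varrho h}, y_{\varrho h}) = (0,0)$.

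For uniqueness, I would set $y_d = 0$ in \eqref{eq:wave-DVF-optimality-system-common-L2} and carry out the standard energy test familiar from mixed/saddle-point problems. I would take $q_h = \widetilde{p}_{\varrho h}\in P_h$ in the first equation and $z_h = y_{\varrho h}\in Y_h$ in the second, obtaining
\begin{equation*}
\frac{1}{\sqrt{\varrho}}\,\|\widetilde{p}_{\varrho h}\|_{L^2(Q)}^2 + b(y_{\varrho h},\widetilde{p}_{\varrho h}) = 0,
\qquad
-b(y_{\varrho h},\widetilde{p}_{\varrho h}) + \frac{1}{\sqrt{\varrho}}\,\|y_{\varrho h}\|_{L^2(Q)}^2 = 0.
\end{equation*}
Adding the two equations cancels the bilinear form $b(\cdot,\cdot)$ (this is precisely the point of the transformation $p_\varrho = \sqrt{\varrho}\,\widetilde{p}_\varrho$ and of the symmetric sign pattern of the saddle-point system), leaving
\begin{equation*}
\frac{1}{\sqrt{\varrho}}\bigl(\|\widetilde{p}_{\varrho h}\|_{L^2(Q)}^2 + \|y_{\varrho h}\|_{L^2(Q)}^2\bigr) = 0,
\end{equation*}
from which $\widetilde{p}_{\varrho h} = 0$ and $y_{\varrho h} = 0$ follow immediately because $\varrho > 0$.

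I do not expect a real obstacle here: since the conformity $Y_h \subset H^{1,1}_{0;0,}(Q)$ and $P_h \subset H^{1,1}_{0;,0}(Q)$ makes $b(\cdot,\cdot)$ well-defined on the discrete spaces, and since the $L^2(Q)$ inner product is trivially positive definite on the (finite-dimensional, hence closed) subspaces $P_h$ and $Y_h$, the cancellation argument works without any inf-sup or ellipticity hypothesis on the wave bilinear form. The only mildly delicate point is to note that the system is indeed square so that injectivity of the system matrix yields surjectivity, and this is clear from the Galerkin setup where trial and test spaces coincide in each block.
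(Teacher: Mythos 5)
Your argument is correct and coincides with the paper's own proof: both test with $q_h = \widetilde{p}_{\varrho h}$ and $z_h = y_{\varrho h}$, add the equations so that the $b(\cdot,\cdot)$ terms cancel, conclude $\widetilde{p}_{\varrho h}=y_{\varrho h}=0$ for $y_d=0$, and invoke the finite-dimensional equivalence of uniqueness and existence for the square Galerkin system. No gaps.
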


\begin{proof}
  Testing \eqref{eq:wave-DVF-optimality-system-common-L2} with
  $q_h = \widetilde{p}_{\varrho h}$, and with $z_h= y_{\varrho h}$, and
  summing up both equations, we get 
  \[
    \| \widetilde{p}_{\varrho h} \|_{L^2(Q)}^2 +
    \| y_{\varrho h} \|_{L^2(Q)}^2 = \langle y_d,y_{\varrho h} \rangle_{L^2(Q)}. 
  \]
  Thus, for the homogeneous case $y_d = 0$, we see that
  $\widetilde p_{\varrho h}=y_{\varrho h} = 0$, which yields uniqueness
  for the solution of the linear problem. Moreover, in the finite
  dimensional case, uniqueness implies existence.  
\end{proof}

\begin{lemma}\label{lem:Ceas-lemma-common-L2}
  Assume the global inverse inequality
  \begin{equation}\label{eq:global-inverse-inequality}
    |v_h|_{H^1(Q)} \leq c_{\text{\tiny inv}} \, h^{-1} \,
    \| v_h \|_{L^2(Q)} \quad \text{for all } v_h\in S_h^1(\mathcal{T}_h).
  \end{equation}
  If we choose $\varrho =h^4$, then
  \begin{eqnarray}\label{eq:Cea-common-L2}
    && \hspace*{-7mm}h^{-2}
       \| \widetilde{p}_\varrho-\widetilde{p}_{\varrho h} \|_{L^2(Q)}^2
       + |\widetilde{p}_\varrho-\widetilde{p}_{\varrho h}|_{H^1(Q)}^2 +
       h^{-2} \| y_\varrho-y_{\varrho h} \|_{L^2(Q)}^2 +
       |y_\varrho-y_{\varrho h}|_{H^1(Q)}^2 \\
    && \hspace*{-4mm} \leq \, c \, \Big[
       h^{-2}\| \widetilde{p}_\varrho - q_h \|_{L^2(Q)}^2 +
       |\widetilde{p}_\varrho-q_h|_{H^1(Q)}^2 +
       h^{-2} \| y_\varrho-z_h \|_{L^2(Q)}^2 +
       |y_\varrho-z_h|_{H^1(Q)}^2 \Big] \nonumber
  \end{eqnarray}
  holds for all $(q_h,z_h)\in P_h\times Y_h$.  
\end{lemma}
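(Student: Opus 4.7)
The plan is to treat \eqref{eq:wave-DVF-optimality-system-common-L2} as a conforming Galerkin approximation of \eqref{eq:wave-transformed-VF-optimality-system-common-L2} with the combined bilinear form
\[
\mathcal{A}((p,y),(q,z)) := \frac{1}{\sqrt{\varrho}}\langle p,q\rangle_{L^2(Q)} + b(y,q) - b(z,p) + \frac{1}{\sqrt{\varrho}}\langle y,z\rangle_{L^2(Q)},
\]
which, being symmetric on the diagonal and skew on the off-diagonal, enjoys the $L^2$-coercivity $\mathcal{A}((p,y),(p,y)) = \frac{1}{\sqrt{\varrho}}(\|p\|_{L^2(Q)}^2+\|y\|_{L^2(Q)}^2)$. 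Subtracting \eqref{eq:wave-DVF-optimality-system-common-L2} from \eqref{eq:wave-transformed-VF-optimality-system-common-L2} gives the Galerkin orthogonality $\mathcal{A}((\widetilde p_\varrho-\widetilde p_{\varrho h},\, y_\varrho-y_{\varrho h}),(q_h,z_h))=0$ for all $(q_h,z_h)\in P_h\times Y_h$.

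I would then apply a Strang-type split. For arbitrary $q_h\in P_h$, $z_h\in Y_h$, write $\widetilde p_\varrho-\widetilde p_{\varrho h} = \eta_p - d_p$ with $\eta_p := \widetilde p_\varrho - q_h$, $d_p := \widetilde p_{\varrho h}-q_h \in P_h$, and analogously $y_\varrho - y_{\varrho h} = \eta_y - d_y$ with $d_y \in Y_h$. Testing Galerkin orthogonality against $(d_p,d_y)\in P_h\times Y_h$, the contribution $b(d_y,d_p)$ appears twice with opposite signs and cancels, leaving (using $\sqrt{\varrho}=h^2$)
\[
h^{-2}\|d_p\|_{L^2(Q)}^2 + h^{-2}\|d_y\|_{L^2(Q)}^2 = h^{-2}\langle\eta_p,d_p\rangle_{L^2(Q)} + h^{-2}\langle\eta_y,d_y\rangle_{L^2(Q)} + b(\eta_y,d_p) - b(d_y,\eta_p).
\]

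The main obstacle is the mismatch between coercivity (in a weighted $L^2$-norm) and boundedness of $b$ (only on $H^1\times H^1$); I bridge it by invoking the inverse inequality \eqref{eq:global-inverse-inequality} on the discrete factor of each cross-term, e.g.\ $|b(\eta_y,d_p)|\leq |\eta_y|_{H^1(Q)}|d_p|_{H^1(Q)}\leq c_{\text{\tiny inv}}\,h^{-1}|\eta_y|_{H^1(Q)}\|d_p\|_{L^2(Q)}$, and analogously for $b(d_y,\eta_p)$. Cauchy--Schwarz on the two $L^2$-terms, together with Young's inequality applied with suitably small parameter to all four right-hand-side terms, absorbs fractions of $h^{-2}\|d_p\|_{L^2}^2 + h^{-2}\|d_y\|_{L^2}^2$ into the left-hand side and leaves only contributions of the form $h^{-2}\|\eta_\ast\|_{L^2(Q)}^2 + |\eta_\ast|_{H^1(Q)}^2$, producing
\[
h^{-2}\|d_p\|_{L^2(Q)}^2 + h^{-2}\|d_y\|_{L^2(Q)}^2 \leq c\bigl(h^{-2}\|\eta_p\|_{L^2(Q)}^2 + |\eta_p|_{H^1(Q)}^2 + h^{-2}\|\eta_y\|_{L^2(Q)}^2 + |\eta_y|_{H^1(Q)}^2\bigr).
\]

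To finish, the $L^2$-parts of \eqref{eq:Cea-common-L2} follow directly from the triangle inequalities $\|\widetilde p_\varrho-\widetilde p_{\varrho h}\|_{L^2(Q)} \leq \|\eta_p\|_{L^2(Q)} + \|d_p\|_{L^2(Q)}$ and the analogue for the state, scaled by $h^{-2}$. For the $H^1$-seminorm parts I apply \eqref{eq:global-inverse-inequality} a second time, now to the discrete correctors themselves, giving $|d_p|_{H^1(Q)}^2 \leq c_{\text{\tiny inv}}^2\, h^{-2}\|d_p\|_{L^2(Q)}^2$ and similarly for $d_y$, which are both controlled by the previous display. Since $(q_h,z_h)$ was arbitrary, this yields \eqref{eq:Cea-common-L2}. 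The scaling $\varrho=h^4$ is not cosmetic: it is exactly what makes the $L^2$-coercivity weight $h^{-2}$ match the $h^{-1}$ introduced by the inverse inequality when lifting the $b$-bound, so that no power of $h$ is lost between the left- and right-hand sides.
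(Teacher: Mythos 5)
Your proof is correct and follows essentially the same route as the paper's: the paper establishes boundedness of the Galerkin projection in the weighted norm $\varrho^{-1/2}\|\cdot\|_{L^2(Q)}^2+|\cdot|_{H^1(Q)}^2$ and invokes the standard ``bounded projection implies Cea'' principle, while you unroll the identical computation through an explicit $\eta$--$d$ splitting of the error. The core mechanism is the same in both: cancellation of the $b$-terms when testing with the discrete pair, the inverse inequality to lift the $L^2$-coercivity to the $H^1$-seminorm for discrete functions, and the balance $\varrho^{-1/2}=h^{-2}$ coming from $\varrho=h^4$.
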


\begin{proof}
  For given $(p,y) \in P \times H_{0;0,}^{1,1}(Q)$, let
  $(p_h,y_h) \in P_h\times Y_h$ denote the unique solution of 
  \begin{equation}\label{eq:wave-DVF-Galerkin-orthog-common-L2}
    \begin{array}{rclll}
      \displaystyle \frac{1}{\sqrt{\varrho}} \,
      \langle p_h, q_h \rangle_{L^2(Q)} + b(y_h,q_h)
      & \hspace*{-2mm} = \hspace*{-2mm}
      & \displaystyle \frac{1}{\sqrt{\varrho}} \,
            \langle p, q_h \rangle_{L^2(Q)} + b(y,q_h),
      & \forall q_h \in P_h, \\[2mm] 
      \displaystyle - b(z_h,p_h) + \frac{1}{\sqrt{\varrho}} \,
      \langle y_h,z_h \rangle_{L^2(Q)}
      & \hspace*{-2mm} = \hspace*{-2mm}
      & \displaystyle -b(z_h,p) + \frac{1}{\sqrt{\varrho}} \,
            \langle y,z_h \rangle_{L^2(Q)}, & \forall  z_h\in Y_h, 
    \end{array}
  \end{equation}
  which induces the Galerkin projection $(p,y)\to (p_h,y_h)$. If we can
  show that the Galerkin projection is bounded, this immediately implies
  Cea's lemma, i.e., the estimate \eqref{eq:Cea-common-L2}. Using the
  global inverse inequality \eqref{eq:global-inverse-inequality} and
  \eqref{eq:wave-DVF-Galerkin-orthog-common-L2}, we compute
  \begin{eqnarray*}
    &&\hspace{-6mm}\varrho^{-1/2} \, \| p_h \|_{L^2(Q)}^2 +
       |p_h|_{H^1(Q)}^2 + \varrho^{-1/2} \, \| y_h \|_{L^2(Q)}^2 +
       |y_h|_{H^1(Q)}^2 \\
    && \hspace*{-4mm}
       \leq \, \Big( \varrho^{-1/2} + c_{\text{\tiny inv}} h^{-2} \Big) \,
       \| p_h \|_{L^2(Q)}^2 +
       \Big( \varrho^{-1/2} + c_{\text{\tiny inv}} h^{-2} \Big) \,
       \| y_h \|_{L^2(Q)}^2 \\
    && \hspace*{-4mm}
       = \, \Big( 1 + c_{\text{\tiny inv}} h^{-2} \varrho^{1/2} \Big) \,
       \Big[ \varrho^{-1/2} \, \| p_h \|_{L^2(Q)}^2 +
       \varrho^{-1/2} \, \| y_h \|_{L^2(Q)}^2 \Big] \\
    && \hspace*{-4mm}
       = \, \Big( 1 + c_{\text{\tiny inv}} h^{-2} \varrho^{1/2} \Big)
       \Big[ \frac{1}{\sqrt{\varrho}} \, \langle p_{h}, p_{h} \rangle_{L^2(Q)} +
       b(y_h,p_h) - b(y_h,p_h) + \frac{1}{\sqrt{\varrho}} \,
       \langle y_h,y_h \rangle_{L^2(Q)} \Big] \\
    && \hspace*{-4mm} = \,
       \Big( 1 + c_{\text{\tiny inv}} h^{-2} \varrho^{1/2} \Big)
       \Big[ \frac{1}{\sqrt{\varrho}} \,
       \langle p,p_h \rangle_{L^2(Q)} + b(y,p_h)
       - b(y_h,p) + \frac{1}{\sqrt{\varrho}} \,
       \langle y,y_h \rangle_{L^2(Q)} \Big] \\
    && \hspace*{-4mm}
       \leq \, \Big( 1 + c_{\text{\tiny inv}} h^{-2} \varrho^{1/2} \Big)
       \Big[ \frac{1}{\sqrt{\varrho}} \, \| p \|_{L^2(Q)} \| p_h \|_{L^2(Q)}
       + |y|_{H^1(Q)} |p_h|_{H^1(Q)} \\
    && \hspace{40mm} + |y_h|_{H^1(Q)} |p|_{H^1(Q)} +
       \frac{1}{\sqrt{\varrho}} \, \| y \|_{L^2(Q)} \| y_h \|_{L^2(Q)} \Big] \\
    && \hspace*{-4mm} \leq \,
       \Big( 1 + c_{\text{\tiny inv}} h^{-2} \varrho^{1/2} \Big)
       \Big[ \frac{1}{\sqrt{\varrho}} \, \| p \|_{L^2(Q)}^2 + |p|_{H^1(Q)}^2
       + \frac{1}{\sqrt{\varrho}} \, \| y \|_{L^2(Q)}^2 +
       |y|_{H^1(Q)}^2 \Big]^{1/2} \\
    &&  \hspace{25mm} \cdot \Big[ \frac{1}{\sqrt{\varrho}} \,
       \| p_h \|_{L^2(Q)}^2 +|p_h|_{H^1(Q)}^2 + \frac{1}{\sqrt{\varrho}} \,
       \| y_h \|_{L^2(Q)}^2 + |y_h|_{H^1(Q)}^2 \Big]^{1/2}.
  \end{eqnarray*}
  Thus, choosing $\varrho = h^4$, we obtain 
  \begin{eqnarray*}
    && h^{-2} \, \| p_h \|_{L^2(Q)}^2 + |p_h|_{H^1(Q)}^2 +
       h^{-2} \, \| y_h \|_{L^2(Q)}^2 + |y_h|_{H^1(Q)}^2 \\
    && \hspace*{1cm} \leq \, \Big( 1 + c_{\text{\tiny inv}} \Big) \Big[
       h^{-2} \, \| p \|_{L^2(Q)}^2 + |p|_{H^1(Q)}^2 +
       h^{-2} \, \| y \|_{L^2(Q)}^2 + |y|_{H^1(Q)}^2 \Big], 
  \end{eqnarray*}
  implying the desired bound. 
\end{proof}

\noindent
Combining the regularization error estimates with the above best
approximation, we can now characterize the error
$\| y_d-y_{\varrho h} \|_{L^2(Q)}$ depending on the regularity of the
target $y_d$. 

\begin{theorem}\label{thm:l2-reg:fem-estiamtes}
  For $y_d \in L^2(Q)$, let $(\widetilde{p}_{\varrho h},y_{\varrho h}) \in
  P_h \times Y_h$ be the unique solution of
  \eqref{eq:wave-DVF-optimality-system-common-L2}. Then, 
  \begin{equation}\label{eq:common-L2:discrete-L2-L2}
    \| y_{\varrho h}-y_d \|_{L^2(Q)}\leq \| y_d \|_{L^2(Q)}.
  \end{equation}
  Moreover, let the assumptions of Lemma \ref{lem:Ceas-lemma-common-L2} hold,
  i.e., a global inverse inequality, and $\varrho=h^4$.
  Let $y_d\in H_{0;0,}^{1,1}(Q) \cap H^2(Q)$. Then,
  \begin{equation}\label{eq:common-L2:discrete-L2-H2}
    \| y_{\varrho h}-y_d \|_{L^2(Q)} \leq c \, h^2 \, | y_d |_{H^2(Q)} .
  \end{equation}
\end{theorem}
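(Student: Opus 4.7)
The argument separates naturally into the two assertions. For \eqref{eq:common-L2:discrete-L2-L2} my plan is to reproduce, at the discrete level, the stability manipulation used in the proof of Lemma \ref{lem:regularization-error-estimates-L2}. Test the first equation of \eqref{eq:wave-DVF-optimality-system-common-L2} with $q_h = \widetilde p_{\varrho h}\in P_h$ and the second with $z_h = y_{\varrho h}\in Y_h$; adding the two equations and multiplying through by $\sqrt{\varrho}$ cancels the bilinear form $b(\cdot,\cdot)$ and gives
\[
\| \widetilde p_{\varrho h} \|_{L^2(Q)}^2 + \| y_{\varrho h} \|_{L^2(Q)}^2 = \langle y_d , y_{\varrho h} \rangle_{L^2(Q)}.
\]
Expanding $\|y_{\varrho h}-y_d\|_{L^2(Q)}^2$ and inserting this identity yields
$\|y_{\varrho h}-y_d\|_{L^2(Q)}^2 + \|\widetilde p_{\varrho h}\|_{L^2(Q)}^2 = \langle y_d - y_{\varrho h}, y_d\rangle_{L^2(Q)}$, and a Cauchy--Schwarz inequality gives \eqref{eq:common-L2:discrete-L2-L2}. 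This is the exact discrete mirror of the continuous proof, so no additional ingredient is needed.

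For \eqref{eq:common-L2:discrete-L2-H2} I will split the error by the triangle inequality
\[
\| y_d - y_{\varrho h} \|_{L^2(Q)} \leq \| y_d - y_\varrho \|_{L^2(Q)} + \| y_\varrho - y_{\varrho h} \|_{L^2(Q)}.
\]
The regularization term is immediately handled by Lemma \ref{lem:regularization-error-estimates-L2}, estimate \eqref{eq:reg-error-estimate-L2:L2-H2}: with $\varrho = h^4$ we obtain $\|y_d-y_\varrho\|_{L^2(Q)} \leq h^2\|\Box y_d\|_{L^2(Q)} \leq c h^2 |y_d|_{H^2(Q)}$.

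The discretization term will be controlled through the C\'ea--type bound \eqref{eq:Cea-common-L2} from Lemma \ref{lem:Ceas-lemma-common-L2}. For the adjoint slot I choose $q_h = 0$, since $\widetilde p_\varrho$ itself is already small: combining \eqref{Estimate p L2 H2} with $\widetilde p_\varrho = p_\varrho/\sqrt\varrho$ gives $\|\widetilde p_\varrho\|_{L^2(Q)} \leq \sqrt\varrho\,\|\Box y_d\|_{L^2(Q)}$, while Proposition \ref{prop:regularization-interpolation-estimates-L2} gives $|\widetilde p_\varrho|_{H^1(Q)} \leq c\,\varrho^{1/4}\|\Box y_d\|_{L^2(Q)}$. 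For the primal slot I take $z_h = \Pi_h y_d$, where $\Pi_h$ is the usual Lagrange (or Cl\'ement) interpolant, which by $y_d\in H^2(Q)$ satisfies the standard estimates $\|y_d-\Pi_hy_d\|_{L^2(Q)} \leq c h^2|y_d|_{H^2(Q)}$ and $|y_d-\Pi_hy_d|_{H^1(Q)} \leq c h |y_d|_{H^2(Q)}$; triangle-splitting $y_\varrho - \Pi_h y_d = (y_\varrho - y_d) + (y_d - \Pi_h y_d)$ and using \eqref{eq:reg-error-estimate-L2:L2-H2} together with \eqref{Estimate y H1 H2} yields the matching rates for $y_\varrho - z_h$.

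Substituting $\varrho = h^4$ in all four contributions on the right-hand side of \eqref{eq:Cea-common-L2} then shows each of them is $O(h^2 |y_d|_{H^2(Q)}^2)$; since the left-hand side contains $h^{-2}\|y_\varrho-y_{\varrho h}\|_{L^2(Q)}^2$, this delivers $\|y_\varrho - y_{\varrho h}\|_{L^2(Q)} \leq c h^2 |y_d|_{H^2(Q)}$, and combined with the regularization estimate above this completes \eqref{eq:common-L2:discrete-L2-H2}. The only nontrivial bookkeeping is verifying that the term $h^{-2}\|\widetilde p_\varrho\|_{L^2(Q)}^2$ and the $H^1$ contributions are consistent with $\varrho = h^4$; in fact, it is precisely the fractional-order estimate \eqref{Estimate p H1 H2} from Proposition \ref{prop:regularization-interpolation-estimates-L2} (assumed rather than proved in the excerpt) that forces this particular balance between $\varrho$ and $h$, so this is the conceptual pressure point of the argument.
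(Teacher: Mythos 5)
Your proposal is correct and follows essentially the same route as the paper: the discrete testing identity for \eqref{eq:common-L2:discrete-L2-L2}, and for \eqref{eq:common-L2:discrete-L2-H2} the triangle-inequality split combined with the C\'ea bound \eqref{eq:Cea-common-L2}, the regularization estimate \eqref{eq:reg-error-estimate-L2:L2-H2}, the choice $z_h=\Pi_h y_d$, and the assumed estimates \eqref{Estimate y H1 H2}--\eqref{Estimate p H1 H2}. The only deviation is your choice $q_h=0$ in place of the paper's $q_h=\Pi_h\widetilde p_\varrho$, which works equally well since \eqref{Estimate p L2 H2} and \eqref{Estimate p H1 H2} make $h^{-2}\|\widetilde p_\varrho\|_{L^2(Q)}^2+|\widetilde p_\varrho|_{H^1(Q)}^2$ itself of order $h^2\,\|\Box y_d\|_{L^2(Q)}^2$ when $\varrho=h^4$.
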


\begin{proof}
  The estimate \eqref{eq:common-L2:discrete-L2-L2} follows the lines of the
  continuous case in Lemma \ref{lem:regularization-error-estimates-L2},
  equation \eqref{eq:reg-error-estimate-L2:L2-L2}. To show
  \eqref{eq:common-L2:discrete-L2-H2}, first note, that by a triangle
  inequality, we have that 
  \begin{equation*}
    \| y_{\varrho h}-y_d \|_{L^2(Q)} \leq
    \| y_{\varrho h}-y_\varrho \|_{L^2(Q)} + \| y_\varrho-y_d \|_{L^2(Q)}. 
  \end{equation*}
  The second term can further be estimated, using
  \eqref{eq:reg-error-estimate-L2:L2-H2} for $\varrho = h^4$, by
  \begin{equation*}
    \| y_\varrho-y_d \|_{L_2(Q)} \leq \sqrt{\varrho} \,
    \| \Box y_d \|_{L^2(Q)} \leq h^2 \, \| y_d \|_{H^2(Q)}.
  \end{equation*}
  For the first term, we consider the estimate
    \eqref{eq:Cea-common-L2}, i.e.,
    \begin{eqnarray*}
      h^{-2} \, \| y_\varrho-y_{\varrho h} \|_{L^2(Q)}^2
      & \leq & c \, \Big[
       h^{-2} \, \| \widetilde{p}_\varrho - q_h \|_{L^2(Q)}^2 +
               |\widetilde{p}_\varrho-q_h|_{H^1(Q)}^2 \\
      && \hspace*{2cm} + h^{-2} \, \| y_\varrho-z_h \|_{L^2(Q)}^2 +
         |y_\varrho-z_h|_{H^1(Q)}^2 \Big] ,
    \end{eqnarray*}
    and it remains to bound all terms of the right hand side. Let
    $q_h = \Pi_h \widetilde{p}_\varrho$ be the Scott--Zhang quasi
    interpolation \cite{LLSY:ScottZhang:1990}, satisfying the best
    approximation and stability estimates
    \begin{equation*}
      \| \widetilde{p}_\varrho - \Pi_h \widetilde{p}_\varrho \|_{L_2(Q)}^2
      \, \leq \, c \, h^2 \, |\widetilde{p}_\varrho|_{H^1(Q)}^2, \quad
      | \widetilde{p}_\varrho- \Pi_h \widetilde{p}_\varrho |_{H^1(Q)}^2
      \, \leq \, c \, |\widetilde{p}_\varrho|_{H^1(Q)}^2 .  
    \end{equation*}
    With this, using $\widetilde{p}_\varrho = \varrho^{-1/2} \, p_\varrho$
    and \eqref{Estimate p H1 H2}, we have, recall $\varrho=h^4$
    \begin{eqnarray*}
      && h^{-2} \, \| \widetilde{p}_\varrho - q_h \|_{L^2(Q)}^2 +
         |\widetilde{p}_\varrho-q_h|_{H^1(Q)}^2 \, \leq \,
         c \, |\widetilde{p}_\varrho|^2_{H^1(Q)} \, = \,
         c \, \varrho^{-1} \, |p_\varrho|^2_{H^1(Q)} \\
      && \hspace*{2cm} \leq \,
         c \, \varrho^{1/2} \, \| \Box y_d \|_{L^2(Q)}^2 \, = \,
         c \, h^2 \, \| \Box y_d \|^2_{L^2(Q)} \, \leq \,
         c \, h^2 \, |y_d|_{H^2(Q)}^2 .
    \end{eqnarray*}
    Next, we consider, using a triangle inequality,
    \eqref{eq:reg-error-estimate-L2:L2-H2}, and choosing
    $z_h = \Pi_hy_d$ to conclude, recall $\varrho=h^4$,
    \begin{eqnarray*}
      \| y_\varrho - z_h \|_{L^2(Q)}
      & \leq & \| y_\varrho - y_d \|_{L^2(Q)}+
               \| y_d - \Pi_h y_d \|_{L^2(Q)} \\
      & \leq & \sqrt{\varrho} \, \| \Box y_d \|_{L^2(Q)} +
               c \, h^2 \, |y_d|_{H^2(Q)} \leq c \, h^2 \, |y_d|_{H^2(Q)} .
    \end{eqnarray*}
    Moreover, now using \eqref{Estimate y H1 H2}, we also have
    \begin{eqnarray*}
      |y_\varrho - \Pi_hy_d|_{H^1(Q)}
      & \leq & |y_\varrho - y_d |_{H^1(Q)} + |y_d - \Pi_h y_d|_{H^1(Q)} \\
      & \leq & \varrho^{1/4} \, \| \Box y_d \|_{L^2(Q)} +
               c \, h \, |y_d|_{H^2(Q)} \, \leq \,
               c \, h \, |y_d|_{H^2(Q)} .
    \end{eqnarray*}
    Finally, collecting all terms together, the assertion follows.
\end{proof}

\subsection{The energy regularization in
  $U=P^\ast=[H_{0;,0}^{1,1}(Q)]^\ast$}
\label{SubSec:SpaceTimeFiniteElementDiscretization:Energy}
This section follows the results presented in
\cite{LLSY:LoescherSteinbach:2024SINUM}. Recall, that the state
$y_\varrho\in Y=\mathcal{H}_{0;0,}(Q)$, minimizing the reduced cost
functional \eqref{eq:cost-functional-energy}, was characterized as the
unique solution of the operator equation \eqref{eq:wave-VF-Schur-complement}.
This is equivalent to the variational formulation to find
$y_\varrho\in Y$ such that 
\begin{equation}\label{eq:energy-VF}
  \varrho \, \langle Sy_\varrho,z \rangle_Q +
  \langle y_\varrho,z \rangle_{L^2(Q)} =
  \langle y_d,z \rangle_{L^2(Q)}\quad \text{for all }z\in Y,  
\end{equation} 
with the linear operator
$S:= B^\ast A^{-1} B: Y\to Y^\ast$, which is bounded,
self-adjoint and $Y$-elliptic. The Galerkin variational formulation of the
problem is then to find $y_{\varrho h}\in Y_h$ such that 
\begin{equation}\label{eq:energy-DVF}
  \varrho \, \langle Sy_{\varrho h},z_h \rangle_Q +
  \langle y_{\varrho h},z_h \rangle_{L^2(Q)} =
  \langle y_d,z_h \rangle_{L^2(Q)}\quad \text{for all }z_h\in Y_h.  
\end{equation} 
Due to the choice of a conforming subspace $Y_h\subset Y$ we obtain, using
standard arguments, unique solvability of \eqref{eq:energy-DVF} and the
Cea type a priori estimate
\begin{equation}\label{eq:energy:Cea-type-estimate}
  \varrho \, \| y_{\varrho}-y_{\varrho h} \|_S^2 +
  \| y_{\varrho }-y_{\varrho h} \|_{L^2(Q)}^2 \leq
  \inf_{z_h\in Y_h} \Big[ \varrho \,
  \| y_{\varrho }-z_h \|_S^2 + \| y_{\varrho}-z_h \|_{L^2(Q)}^2 \Big]. 
\end{equation}
Combining this best approximation result with the regularization error
estimates in Lemma \ref{lem:wave-regularization-error-estimates}, we can
derive an asymptotically optimal choice for the regularization parameter
$\varrho$ depending solely on the regularity of target.   

\begin{lemma}[{\cite[Theorem 4.1]{LLSY:LoescherSteinbach:2024SINUM}}]
  \label{lem:energy-FEM-estiamtes-Hs}
  Let $y_d\in H^{s,s}_{0;0,}(Q):=[H^{1,1}_{0;0,}(Q),L_2(Q)]_s$ for some
  $s\in[0,1]$, or $y_d\in H^{1,1}_{0;0,}(Q)\cap H^s(Q)$ for $s\in [1,2]$.
  If we choose $\varrho = h^2$, then for the unique solution
  $y_{\varrho h}\in Y_h$ of \eqref{eq:energy-DVF} there holds 
  \begin{equation}\label{eq:energy:FEM-estaimte-Hs}
    \| y_{\varrho h}-y_d \|_{L^2(Q)} \leq c \, h^s \, \| y_d \|_{H^s(Q)}.
  \end{equation}
\end{lemma}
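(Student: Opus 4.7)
The plan is to establish the estimate at the two endpoints $s=0$ and $s=2$, and then to cover the full range $s\in[0,2]$ by a space interpolation argument applied to the linear data-to-error map $y_d\mapsto y_d-y_{\varrho h}$. The interpolation spaces appearing in the hypothesis, namely $H^{s,s}_{0;0,}(Q)=[H^{1,1}_{0;0,}(Q),L^2(Q)]_s$ for $s\in[0,1]$ and $H^s(Q)\cap H^{1,1}_{0;0,}(Q)$ for $s\in[1,2]$, are precisely the intermediate spaces needed for that step. Throughout, the coupling $\varrho=h^2$ is used to balance regularization and discretization errors.

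For $s=0$ I would proceed directly from the discrete formulation \eqref{eq:energy-DVF}: testing with $z_h=y_{\varrho h}$ yields
\[
\varrho\,\|y_{\varrho h}\|_S^2 + \|y_{\varrho h}\|_{L^2(Q)}^2 = \langle y_d,y_{\varrho h}\rangle_{L^2(Q)},
\]
and the same algebraic manipulation (Cauchy--Schwarz plus completion of squares) that produced \eqref{eq:wave-rho-H-H-estimate} in the continuous setting gives $\|y_{\varrho h}-y_d\|_{L^2(Q)}\leq\|y_d\|_{L^2(Q)}$. For $s=2$ I would split via the triangle inequality
\[
\|y_{\varrho h}-y_d\|_{L^2(Q)}\leq\|y_\varrho-y_d\|_{L^2(Q)}+\|y_\varrho-y_{\varrho h}\|_{L^2(Q)}.
\]
The regularization part is handled by \eqref{eq:wave-rho-HandX-Syd-estimate}, yielding $\|y_\varrho-y_d\|_{L^2(Q)}\leq c\,h^2\|Sy_d\|_{L^2(Q)}$, which is bounded by $c\,h^2\|y_d\|_{H^2(Q)}$ under the hypothesis. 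For the discretization part I would invoke the Cea-type inequality \eqref{eq:energy:Cea-type-estimate} with $z_h=\Pi_h y_d$, the Scott--Zhang quasi-interpolant of the target, and split both norms by triangle inequality into a regularization piece ($y_\varrho-y_d$) and an interpolation piece ($y_d-\Pi_h y_d$). The regularization pieces are controlled by \eqref{eq:wave-rho-HandX-X-estimate}--\eqref{eq:wave-rho-HandX-Syd-estimate}; the Scott--Zhang pieces by standard best-approximation bounds, combined with the norm equivalence \eqref{eq:wave-norm-equivalence} to translate the $\|\cdot\|_S$-contribution first to $\|\cdot\|_Y$ and then to an $H^1$-seminorm estimate.

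The intermediate range $s\in(0,2)$ then follows by space interpolation between the $s=0$ bound (with constant $c$) and the $s=2$ bound (with constant $c\,h^2$). The main obstacle is the Cea bound in the case $s=2$: it requires the \emph{simultaneous} control of $\varrho\|y_\varrho-\Pi_h y_d\|_S^2$ and $\|y_\varrho-\Pi_h y_d\|_{L^2(Q)}^2$, neither of which is the natural quantity produced by either the Scott--Zhang estimates or the regularization error bounds on their own. The delicate point is to arrange the splittings so that the prefactor $\varrho=h^2$ absorbs exactly the extra factor of $h^{-1}$ coming from the $H^1$-type estimate of $\|y_d-\Pi_h y_d\|_Y$, so that each contribution scales as $h^4\|y_d\|_{H^2(Q)}^2$ and, after interpolation, as $h^{2s}\|y_d\|_{H^s(Q)}^2$.
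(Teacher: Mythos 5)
The paper contains no proof of this lemma: it is quoted verbatim from [Theorem~4.1] of the cited reference, and the only indication of the intended argument is the sentence immediately preceding it (``combining this best approximation result with the regularization error estimates in Lemma~\ref{lem:wave-regularization-error-estimates}''). Your proposal implements exactly that recipe --- endpoint estimates built from the Cea-type bound \eqref{eq:energy:Cea-type-estimate} and Lemma~\ref{lem:wave-regularization-error-estimates}, followed by space interpolation --- so the route is the intended one. The $s=0$ endpoint and the overall architecture are fine.

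There is, however, a genuine gap at the $s=2$ endpoint. You invoke \eqref{eq:wave-rho-HandX-Syd-estimate} to obtain $\|y_\varrho-y_d\|_{L^2(Q)}\leq\varrho\,\|Sy_d\|_{L^2(Q)}$ and then assert that this ``is bounded by $c\,h^2\|y_d\|_{H^2(Q)}$ under the hypothesis''. That step is unjustified: $y_d\in H^2(Q)\cap H^{1,1}_{0;0,}(Q)$ gives $By_d\in L^2(Q)$, but it does \emph{not} give $Sy_d=B^\ast A^{-1}By_d\in L^2(Q)$, let alone $\|Sy_d\|_{L^2(Q)}\leq c\,\|y_d\|_{H^2(Q)}$; this requires full $H^2$ regularity of the auxiliary space-time Laplace problem, which fails in general --- the paper's own Remark~\ref{rem:lower-order-convergence-energy-regularization} makes precisely this point and uses it to explain the reduced rates observed numerically for the smooth target. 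The same obstruction hits the term $\varrho\|y_\varrho-\Pi_hy_d\|_S^2$ in the Cea bound, since the only estimate of $\|y_\varrho-y_d\|_S$ better than $O(1)\cdot\|y_d\|_S$ is $\sqrt{\varrho}\,\|Sy_d\|_{L^2(Q)}$, which again needs $Sy_d\in L^2(Q)$. Without that extra hypothesis your argument delivers only the rate $h^{\min(s,1)}$ for $s\in[1,2]$; the continuous analogue \eqref{eq:wave-interpolation-L2Hs-eq1} carries the additional assumption $Sy_d\in H^{s-2}(Q)$ for exactly this reason, and it is implicitly needed here too. A further, more cosmetic point: interpolating in one step between $s=0$ and $s=2$ does not directly produce the hypothesis spaces $[H^{1,1}_{0;0,}(Q),L^2(Q)]_s$ for $s\in[0,1]$; the clean version interpolates separately on $[0,1]$ (using the $s=1$ endpoint from \eqref{eq:wave-rho-HandX-X-estimate} together with $\|y_d\|_S\leq 2\|y_d\|_Y\leq c\,|y_d|_{H^1(Q)}$) and on $[1,2]$.
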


\noindent
For the numerical treatment, the variational formulation
\eqref{eq:energy-DVF} is not suitable, as the realization of the operator
$S$ is not computable. Thus, in the last step of our analysis we will
consider a computable realization of $S$, leading to a perturbed variational
formulation. Therefore, let $y\in Y$ be arbitrary but fixed and let us
consider the auxiliary problem to find $p_y\in P$ such that 
\begin{equation*}
  \langle Ap_y,q \rangle_Q =
  \langle By,q \rangle_Q \quad \text{for all }q\in P. 
\end{equation*}  
Then, $Sy = B^\ast p_y$. To define an approximation, we introduce
$p_{yh}\in P_h$ as unique solution of 
\begin{equation}\label{eq:energy:auxiliary-perturbed}
  \langle Ap_{y h},q_h \rangle_Q =
  \langle By,q_h \rangle_Q \quad \text{for all }q_h\in P_h, 
\end{equation}  
and define $\widetilde Sy := B^\ast p_{yh}$. Then we consider the perturbed
variational formulation to find $\widetilde y_{\varrho h}\in Y_h$ such that 
\begin{equation}\label{eq:energy:perturbed-DVF}
  \varrho \, \langle \widetilde S\widetilde y_{\varrho h},z_h \rangle_Q +
  \langle \widetilde y_{\varrho h},z_h \rangle_{L^2(Q)} =
  \langle y_d,z_h \rangle_{L^2(Q)} \quad \text{for all }z_h\in Y_h.  
\end{equation} 
Note, that due to the properties of $A:P\to P^\ast$ and $B:Y\to P^\ast$,
the operator $\widetilde S:Y\to Y^\ast$ is bounded, symmetric and positive
semi-definite. Thus, \eqref{eq:energy:perturbed-DVF} admits a unique
solution. Moreover, we see that the perturbation error solely depends on
the best approximation properties of $P_h\subset P$. Thus, using a Strang
lemma argument, which requires an inverse inequality, we can prove analogous
estimates as in Lemma \ref{lem:energy-FEM-estiamtes-Hs} for the solution of
the perturbed variational formulation. 

\begin{theorem}[{\cite[Corollary 4.7]{LLSY:LoescherSteinbach:2024SINUM}}]
  \label{thm:energy:perturbed-DVF}
  Let the global inverse inequality \eqref{eq:global-inverse-inequality}
  hold and choose $\varrho = h^2$. Then the unique solution
  $\widetilde y_{\varrho h}\in Y_h$ of \eqref{eq:energy-DVF} satisfies
  \begin{equation}
    \| \widetilde y_{\varrho h}-y_d \|_{L^2(Q)} \leq
    c \, h^s \, \| y_d \|_{H^s(Q)},\, s\in[0,2],
  \end{equation}
  if $y_d\in H^{s,s}_{0;0,}(Q)$ for $s\in[0,1]$, or
  $y_d\in H^{1,1}_{0;0,}(Q)\cap H^s(Q)$ for $s\in[1,2]$. 
\end{theorem}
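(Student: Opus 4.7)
The plan is to split the error via the triangle inequality
\[
  \| \widetilde{y}_{\varrho h} - y_d \|_{L^2(Q)} \leq
  \| \widetilde{y}_{\varrho h} - y_{\varrho h} \|_{L^2(Q)} +
  \| y_{\varrho h} - y_d \|_{L^2(Q)},
\]
where $y_{\varrho h} \in Y_h$ is the (theoretical, non-computable) solution of the unperturbed Galerkin problem \eqref{eq:energy-DVF}. The second term is already handled by Lemma~\ref{lem:energy-FEM-estiamtes-Hs}, which gives exactly the target bound $c\,h^s\,\|y_d\|_{H^s(Q)}$ under the stated regularity of $y_d$ and the choice $\varrho = h^2$. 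All the new work is in estimating the perturbation $\widetilde{y}_{\varrho h} - y_{\varrho h}$.

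For that, I would use a Strang-type argument. Subtracting \eqref{eq:energy-DVF} from \eqref{eq:energy:perturbed-DVF} and testing with $z_h := \widetilde{y}_{\varrho h} - y_{\varrho h} \in Y_h$ yields
\[
  \varrho \, \langle \widetilde{S} z_h, z_h \rangle_Q + \|z_h\|_{L^2(Q)}^2
  = \varrho \, \langle (S - \widetilde{S}) y_{\varrho h}, z_h \rangle_Q.
\]
Since $\widetilde{S}$ is symmetric positive semi-definite, the first term on the left may be dropped, leaving $\|z_h\|_{L^2(Q)}^2 \leq \varrho \, |\langle (S - \widetilde{S}) y_{\varrho h}, z_h \rangle_Q|$. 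So it remains to estimate the consistency error of $\widetilde S$ against $S$ on discrete functions.

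To bound the consistency term I would exploit the definitions of $S$ and $\widetilde{S}$ via the auxiliary problems. Writing $p_{y_{\varrho h}} \in P$ and $p_{y_{\varrho h}, h} \in P_h$ for the exact and Galerkin solutions of $A p = B y_{\varrho h}$, and introducing the companion auxiliary problem $A p_{z_h} = B z_h$ with Galerkin approximation $p_{z_h,h} \in P_h$, one obtains by Galerkin orthogonality for both auxiliary problems and the symmetry of $A$ the identity
\[
  \langle (S - \widetilde{S}) y_{\varrho h}, z_h \rangle_Q
  = \langle A(p_{y_{\varrho h}} - p_{y_{\varrho h}, h}), p_{z_h} - p_{z_h, h} \rangle_Q,
\]
so by Cauchy--Schwarz in the $A$-inner product we get the product of two best-approximation errors in $P_h$. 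Standard a priori estimates for the auxiliary elliptic-type problem give $|p_{y_{\varrho h}} - p_{y_{\varrho h},h}|_{H^1(Q)} \lesssim h \, |p_{y_{\varrho h}}|_{H^2(Q)} \lesssim h \, \|B y_{\varrho h}\|_{L^2(Q)}$ (up to the loss of regularity noted in Remark~\ref{rem:lower-order-convergence-energy-regularization}), and similarly for $z_h$. Here the inverse inequality \eqref{eq:global-inverse-inequality} enters: it allows $\|B z_h\|_{L^2(Q)}$ to be controlled by $h^{-2} \|z_h\|_{L^2(Q)}$, and likewise for $y_{\varrho h}$. Combining these bounds and using the a priori $L^2$ stability $\|y_{\varrho h}\|_{L^2(Q)} \leq \|y_d\|_{L^2(Q)}$ from \eqref{eq:wave-rho-H-H-estimate}, the right-hand side becomes
\[
  \varrho \cdot h^2 \cdot h^{-2} \|y_{\varrho h}\|_{L^2(Q)} \cdot h^{-2} \|z_h\|_{L^2(Q)},
\]
and with $\varrho = h^2$ everything balances.

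The main obstacle I expect is precisely this balancing: the auxiliary adjoint problem loses regularity (cf.\ Remark~\ref{rem:lower-order-convergence-energy-regularization}), so one cannot naively invoke $H^2$-regularity for $p_{y_{\varrho h}}$, and the two factors of $h^{-2}$ coming from the inverse inequality must be tightly compensated by $\varrho h^2 = h^4$ together with the best-approximation rate. Once the $s=2$ case is established in this form, interpolating between it and the trivial $s=0$ stability bound \eqref{eq:common-L2:discrete-L2-L2} (adapted to the perturbed problem, which follows from the same energy test with $z_h = \widetilde{y}_{\varrho h}$ and the positivity of $\widetilde{S}$) yields the full range $s \in [0,2]$, giving the stated estimate.
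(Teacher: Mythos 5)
The paper itself does not prove this theorem: it is quoted from [Corollary~4.7] of the cited reference, and the surrounding text only sketches the intended argument (``a Strang lemma argument, which requires an inverse inequality,'' with the perturbation error governed by the best approximation in $P_h$). Your skeleton --- triangle inequality against the unperturbed Galerkin solution, Lemma~\ref{lem:energy-FEM-estiamtes-Hs} for that part, an energy test of the difference of \eqref{eq:energy-DVF} and \eqref{eq:energy:perturbed-DVF}, and the rewriting of the consistency term as $\langle A(p_{y_{\varrho h}}-p_{y_{\varrho h},h}),\,p_{e_h}-p_{e_h,h}\rangle_Q$ via Galerkin orthogonality --- is exactly that route, and the identity you derive is correct.

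The quantitative execution, however, has a genuine gap. Writing $e_h:=\widetilde y_{\varrho h}-y_{\varrho h}$, your bound reads
\[
  \|e_h\|_{L^2(Q)}^2 \;\le\; \varrho\,
  \|p_{y_{\varrho h}}-p_{y_{\varrho h},h}\|_A\,
  \|p_{e_h}-p_{e_h,h}\|_A ,
\]
with $\|\cdot\|_A$ the norm induced by the $A$-inner product, and you then estimate \emph{both} factors by $h\cdot h^{-2}\|\cdot\|_{L^2(Q)}$. That yields $\varrho\, h^2\, h^{-4}\,\|y_{\varrho h}\|_{L^2(Q)}\|e_h\|_{L^2(Q)}$, which for $\varrho=h^2$ is just $\|y_d\|_{L^2(Q)}\|e_h\|_{L^2(Q)}$: the perturbation is bounded only by $\|y_d\|_{L^2(Q)}$, i.e., you recover the trivial $s=0$ estimate and nothing more. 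The claimed $s=2$ rate $h^2\|y_d\|_{H^2(Q)}$ therefore does not follow, and there is no nontrivial endpoint left to interpolate against. The repair is to not convert everything back into $L^2$ norms by inverse inequalities. For the $e_h$-factor, Cea's lemma together with \eqref{eq:wave-norm-equivalence} and \eqref{eq:global-inverse-inequality} gives $\|p_{e_h}-p_{e_h,h}\|_A\le\|e_h\|_S\le 2\,c_{\mathrm{inv}}\,h^{-1}\|e_h\|_{L^2(Q)}$ --- note one power of $h^{-1}$, not two; the quantity $\|Be_h\|_{L^2(Q)}$ you invoke is not even defined for piecewise linears, since $\Box e_h$ is a distribution supported on element faces, so the correct object is the dual norm $\|Be_h\|_{P^\ast}=\|e_h\|_S$. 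For the $y_{\varrho h}$-factor one must retain a genuine $O(h^{s-1})$ approximation gain measured against a norm of $y_d$, using discrete analogues of the stability bounds \eqref{eq:wave-rho-XandX-stability} and \eqref{eq:wave-rho-SandS-stability} and the regularity of the auxiliary problem $Ap=By_{\varrho h}$, where the loss of regularity described in Remark~\ref{rem:lower-order-convergence-energy-regularization} genuinely limits what can be proved. This balancing is the actual content of the cited Corollary~4.7, and it is precisely the part your proposal leaves open.
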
 

\noindent
When introducing $\widetilde p_{\varrho h} = -\varrho p_{\widetilde y_{\varrho h}}$,
where $p_{\widetilde y_{\varrho h}}$ solves \eqref{eq:energy:auxiliary-perturbed}
for $y=\widetilde y_{\varrho h}$, we see that the perturbed variational
formulation \eqref{eq:energy:perturbed-DVF} is equivalent to the coupled
system to find
$(\widetilde p_{\varrho h},\widetilde y_{\varrho h})\in P_h\times Y_h$ such that 
\begin{equation}\label{eq:wave-DVF-optimality-system-common-L2 double}
  \begin{array}{rcrcll}
    \varrho^{-1} \, \langle A \widetilde p_{\varrho h}, q_h \rangle_Q
    & + & \langle B \widetilde y_{\varrho h},q_h \rangle_Q
    & = & 0 &\; \text{for all } q_h\in P_h,\\[1mm]
    - \langle B z_h,\widetilde p_{\varrho h} \rangle_Q
    & + &\langle \widetilde y_{\varrho h},z_h \rangle_{L^2(Q)}
    &=& \langle y_d,z_h \rangle_{L^2(Q)},&\; \text{for all } z_h\in Y_h.
  \end{array}
\end{equation}
This will be the starting point for the numerical treatment of the problem. 
We stress again that, by \eqref{eq:evaluation-wave-bilinear-form},
we have for $y_h\in Y_h\subset H_{0;0,}^{1,1}(Q)$ and
$q_h\in P_h\subset H^{1,1}_{0;,0}(Q)$ that 
\[
  \langle B y_h , q_h \rangle_Q
  =
  - \langle \partial_t y_h , \partial_t q_h \rangle_{L^2(Q)}
  + \langle \nabla_x y_h , \nabla_x q_h \rangle_{L^2(Q)}
  \; \mbox{for all} \; y_h \in Y_h, \, q_h \in P_h .
\]

\begin{remark}\label{Sec:FEM:Remark:LocalInverseInequality}
  In the proof of Lemma \ref{lem:Ceas-lemma-common-L2} and
  Theorem \ref{thm:energy:perturbed-DVF} we have used a global inverse
  inequality, which in general assumes a globally quasi-uniform mesh.
  However, in the numerical treatment we will also consider a variable
  regularization parameter
  $\varrho(x,t) = h_\tau^r,\; \forall \, (x,t) \in \tau,\;
  \forall \tau \in \mathcal{T}_h$
  with $r=4$ for $L^2$-regularization, and $r=2$ for energy regularization,
  where it seems to be sufficient to consider a local inverse inequality
  \begin{equation}\label{Sec:FEM:LocalInverseInequality}
    \|\nabla v_h\|_{L^2(\tau)} \le c_\text{\tiny inv} \,
    h_\tau^{-1} \ \|v_h\|_{L^2(\tau)}
    \quad \forall v_h \in S_h^1(\mathcal{T}_h),
    \forall \tau \in \mathcal{T}_h;
  \end{equation}
  see \cite{LLSY:LangerLoescherSteinbachYang:2024CAMWA} for a related
  approach for a distributed optimal control problem subject to the
  Poisson equation.  
\end{remark}

\section{Solvers}\label{Sec:Solvers}
Let us first specify the submatrices $B_h$ and $M_h$ appearing in the SID
system \eqref{Sec:Introduction:Eqn:AbstractDiscreteReducedOptimalitySystem} 
for hyperbolic OCPs. The coefficients $B_h[j,k]$ of the $m_h \times n_h$
rectangular wave matrix $B_h$ are defined by
\begin{equation}\label{Sec:Solvers:Eqn:Bh[j,k]}
  B_h[j,k] =
  - \langle \partial_t\varphi_{k}, \partial_t\psi_{j} \rangle_{L^2(Q)} 
  + \langle \nabla_x \varphi_{k}, \nabla_x\psi_{j} \rangle_{L^2(Q)},
\end{equation}
for all $j=1,\ldots,m_h$ and $k=1,\ldots,n_h$,
whereas the coefficients $M_h[l,k]$ of the SPD $n_h \times n_h$ mass
matrix $M_h$ are given by
\begin{equation}\label{Sec:Solvers:Eqn:Mh[l,k]}
  M_h[l,k] = \langle \varphi_{k}, \varphi_{l} \rangle_{L^2(Q)} \;
  \forall \, l,k = 1,\ldots,n_h.
\end{equation}
Later we will heavily use that the mass matrix $M_h$ is spectrally equivalent 
to the lumped mass matrix $D_h =  \text{lump}(M_h)$ satisfying 
the spectral equivalent inequalities
\begin{equation}\label{Sec:Solvers:Eqn:MhDh}
  (d+2)^{-1} D_h \le M_h \le D_h;
\end{equation}
see, e.g., \cite{LLSY:LangerLoescherSteinbachYang:2023arXiv:2304.14664}.
The $m_h \times m_h$ matrix $A_{\varrho h}$ is also SPD as we will see later 
when we consider the $L^2$ and the energy regularization in 
Subsections \ref{Sec:Solvers:SubSec:L2} and \ref{Sec:Solvers:SubSec:Energy},
respectively.
    
There are many methods for solving the SID system 
\eqref{Sec:Introduction:Eqn:AbstractDiscreteReducedOptimalitySystem};
see Section~\ref{Sec:Introduction} for some references. Here we focus on
Bramble–Pasciak's PCG (PB-PCG) \cite{LLSY:BramblePasciak:1988a}.
The basic idea consists in transforming the SID system 
\eqref{Sec:Introduction:Eqn:AbstractDiscreteReducedOptimalitySystem}
to the equivalent SPD system 
\begin{equation}
\label{Sec:Solvers:Eqn:Kx=y}
  {\mathcal K}_h
  \begin{bmatrix}
   \mathbf{p}_h\\ 
   \mathbf{y}_h\\ 
  \end{bmatrix}
  =
  \begin{bmatrix}
    \mathbf{0}\\ 
    \mathbf{y}_{dh}\\ 
  \end{bmatrix}
  := 
   \begin{bmatrix}
    A_{\varrho h}\widehat{A}_{\varrho h}^{-1}-I_h & 0 \\
    B_h^\top\widehat{A}_{\varrho h}^{-1} & -I_h
  \end{bmatrix}
  \begin{bmatrix}
    \mathbf{0}\\ 
   -\mathbf{y}_{dh}\\ 
  \end{bmatrix},
\end{equation}
where the new system matrix
\begin{equation*}
  \begin{aligned}
    {\mathcal K}_h=&
    \begin{bmatrix}
      A_{\varrho h}\widehat{A}_{\varrho h}^{-1}-I_h & 0\\ 
      B_{\varrho h}^\top \widehat{A}_{\varrho h}^{-1} & -I_h\\
    \end{bmatrix}
    \begin{bmatrix}
      A_{\varrho h} & B_h\\ 
      B_{h}^\top & -M_h\\
    \end{bmatrix}\\
    =&
    \begin{bmatrix}
      (A_{\varrho h}-\widehat{A}_{\varrho h})\widehat{A}_{\varrho h}^{-1}A_{\varrho h} & (A_{\varrho h}-\widehat{A}_{\varrho h})\widehat{A}_{\varrho h}^{-1}B_h\\ 
      B_h^\top \widehat{A}_{\varrho h}^{-1}(A_{\varrho h}-\widehat{A}_{\varrho h})
      & B_h^\top \widehat{A}_{\varrho h}^{-1}B_h+M_h \\
    \end{bmatrix}
  \end{aligned}
\end{equation*}
is SPD provided that $\widehat{A}_{\varrho h}$ is a properly scaled
preconditioner for $A_{\varrho h}$ such that the spectral equivalence
inequalities
\begin{equation}\label{Sec:Solvers:Eqn:SpectralEquivalenceInequalities:A}
  \widehat{A}_{\varrho h} < A_{\varrho h} \le
  \overline{c}_A \, \widehat{A}_{\varrho h}
\end{equation}
hold for some $h$-independent, positive constant $\overline{c}_A$. 
Now we can solve the SPD system \eqref{Sec:Solvers:Eqn:Kx=y} by means
of the PCG preconditioned by the SPD Bramble--Pasciak preconditioner
\begin{equation}\label{Sec:Solvers:Eqn:BPpreconditioner}
   \widehat{\mathcal{K}}_h
   =
   \begin{bmatrix}
    A_{\varrho h} - \widehat{A}_{\varrho h} & 0\\
    0 & \widehat{S}_{\varrho h}
  \end{bmatrix},
\end{equation}
where $\widehat{S}_h$ is some SPD preconditioner for the exact Schur complement 
$S_{\varrho h} = B_h^\top {A}_{\varrho h}^{-1}B_h+M_h$ such that  the spectral
equivalence inequalities%
\begin{equation}\label{Sec:Solvers:Eqn:SpectralEquivalenceInequalities:S}
  \underline{c}_S \, \widehat{S}_{\varrho h} < S_{\varrho h} \le
  \overline{c}_S \, \widehat{S}_{\varrho h}
\end{equation}
hold for some $h$-independent, positive constants $\underline{c}_S$ and
$\overline{c}_S$. The spectral equivalence inequalities 
\eqref{Sec:Solvers:Eqn:SpectralEquivalenceInequalities:A}
and
\eqref{Sec:Solvers:Eqn:SpectralEquivalenceInequalities:S}
yield the spectral equivalence inequalities
\begin{equation}\label{Sec:Solvers:Eqn:SpectralEquivalenceInequalities:K}
  \underline{c}_\mathcal{K} \, \widehat{\mathcal{K}}_h < {\mathcal{K}}_h
  \le \overline{c}_\mathcal{K} \, \widehat{\mathcal{K}}_h,
\end{equation}
where the positive constants $\underline{c}_\mathcal{K}$ and
$\overline{c}_\mathcal{K}$
can explicitly be computed from $\overline{c}_A$, $\underline{c}_S$, and
$\overline{c}_S$; see the original paper \cite{LLSY:BramblePasciak:1988a},
and \cite{LLSY:Zulehner:2002MCOM} for an improvement of the lower bound
$\underline{c}_\mathcal{K}$. Now the standard  PCG convergence rate estimates
in the ${\mathcal{K}}_h$ energy norm  directly follow from
\eqref{Sec:Solvers:Eqn:SpectralEquivalenceInequalities:K}.

Alternatively, we can solve the primal SPD Schur complement system
\begin{equation}\label{Sec:Solvers:Eqn:Sy=yd}
  {S}_{\varrho h}\mathbf{y}_h :=
  (B_h^\top {A}_{\varrho h}^{-1}B_h+M_h) \mathbf{y}_h = \mathbf{y}_{dh}
\end{equation}
by means of the standard PCG preconditioned by $\widehat{S}_{\varrho h}$.
This Schur Complement PCG (SC-PCG) has one drawback. 
The matrix-by-vector multiplication ${S}_{\varrho h} * \mathbf{y}_h^n$ 
requires the application of ${A}_{\varrho h}^{-1}$ that cannot easily be 
replaced by a preconditioner without perturbing the discretization error.
We will discuss this issue for the $L^2$ and energy regularizations 
in the following subsections separately.

\subsection{$L^2$ Regularization and Mass Lumping}
\label{Sec:Solvers:SubSec:L2}
For the standard $L^2$ regularization, the regularization matrix
$A_{\varrho h}$ is nothing but the SPD $m_h \times m_h$ mass matrix
$\overline{M}_{\varrho h}$, the coefficients of which are defined by
\begin{equation}\label{Sec:Solvers:Eqn:overlineMh[j,i]}
  \overline{M}_{\varrho h}[j,i] =
  \langle \varrho^{-1} \psi_{i}, \psi_{j} \rangle_{L^2(Q)} \;
  \forall \, j,i = 1,\ldots,m_h.
\end{equation}
Here we permit variable regularization of the form
\begin{equation}\label{Sec:Solvers:Eqn:L_2_VariableRegularization}
  \varrho(x,t) = h_\tau^4, \; \forall \, (x,t) \in \tau,\;
  \forall \tau \in \mathcal{T}_h,
\end{equation}
which we implemented in all numerical experiments when adaptive mesh
refinement is used. It is clear that
\eqref{Sec:Solvers:Eqn:L_2_VariableRegularization} 
turns to $\varrho = h^4$ in the case of uniform mesh refinement 
for which we have made the error analysis in 
Subsection~\ref{SubSec:SpaceTimeFiniteElementDiscretization:L2}.

It is also clear that $\overline{M}_{\varrho h}$ is spectrally equivalent to
$\overline{D}_{\varrho h} = \text{lump}(\overline{M}_{\varrho h})$ 
with the same spectral equivalence constants as given in
\eqref{Sec:Solvers:Eqn:MhDh} for $M_h$ and $D_h$, i.e.
\begin{equation}\label{Sec:Solvers:Eqn:overlineMhoverlineDh}
  (d+2)^{-1} \, \overline{D}_{\varrho h} \le \overline{M}_{\varrho h}
  \le \overline{D}_{\varrho h} .
\end{equation}
Now the following spectral equivalence inequalities 
are valid for the Schur complement $B^\top_h A_{\varrho h}^{-1} B_h + M_h$.

\begin{theorem}\label{Theorem:SpectralEquivalenceTheoremL2Regularization1}
Let us consider the optimally balanced, mesh-dependent, 
variable regularization \eqref{Sec:Solvers:Eqn:L_2_VariableRegularization},
and let $M_h$ as defined in \eqref{Sec:Solvers:Eqn:Mh[l,k]} with
$D_h = \text{\rm lump}(M_h)$.
Then the spectral equivalence inequalities 
\begin{equation}\label{Sec:Solvers:Eqn:L2RegSpectralEquivalenceInequalities}
  (d+2)^{-1} \, D_h \, \le \, M_h \, \le \,
  B^\top_h A_{\varrho h}^{-1} B_h + M_h 
  \, \le \, (c_\text{\tiny inv}^4+1) \, M_h 
  \, \le \, (c_\text{\tiny inv}^4+1) \, D_h
\end{equation}
hold for both $A_{\varrho h} = \overline{M}_{\varrho h}$ and 
$A_{\varrho h} = \overline{D}_{\varrho h} := \text{\rm lump}(\overline{M}_{\varrho h})$ 
corresponding to the standard $L^2$ regularization and the mass-lumped
$L^2$ regularization, respectively. The constant $c_\text{\tiny inv}$
originates from the inverse inequalities 
\eqref{Sec:FEM:LocalInverseInequality}.
\end{theorem}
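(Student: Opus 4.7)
The outer pair of inequalities, $(d+2)^{-1} D_h \le M_h$ and $(c_\text{\tiny inv}^4+1) M_h \le (c_\text{\tiny inv}^4+1) D_h$, is immediate from the mass lumping estimate \eqref{Sec:Solvers:Eqn:MhDh}. The inequality $M_h \le B_h^\top A_{\varrho h}^{-1} B_h + M_h$ holds trivially, since $A_{\varrho h}$ is SPD and therefore $B_h^\top A_{\varrho h}^{-1} B_h$ is positive semi-definite. Thus the only non-trivial point is the upper bound $B_h^\top A_{\varrho h}^{-1} B_h \le c_\text{\tiny inv}^4 \, M_h$, from which the chain in \eqref{Sec:Solvers:Eqn:L2RegSpectralEquivalenceInequalities} follows.

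First I would observe that the spectral equivalence \eqref{Sec:Solvers:Eqn:overlineMhoverlineDh} implies $\overline{D}_{\varrho h}^{-1} \le \overline{M}_{\varrho h}^{-1}$ in the Loewner sense, so that $B_h^\top \overline{D}_{\varrho h}^{-1} B_h \le B_h^\top \overline{M}_{\varrho h}^{-1} B_h$ and it suffices to establish the bound for the non-lumped choice $A_{\varrho h} = \overline{M}_{\varrho h}$.

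For that I would use the variational representation
\[
\mathbf{y}_h^\top B_h^\top \overline{M}_{\varrho h}^{-1} B_h \mathbf{y}_h = \sup_{\mathbf{0} \neq \mathbf{q}_h \in \mathbb{R}^{m_h}} \frac{(\mathbf{q}_h^\top B_h \mathbf{y}_h)^2}{\mathbf{q}_h^\top \overline{M}_{\varrho h} \mathbf{q}_h},
\]
and estimate numerator and denominator separately. Identifying $\mathbf{q}_h$ and $\mathbf{y}_h$ with FE functions $q_h \in P_h$ and $y_h \in Y_h$, the elementwise-constant regularization \eqref{Sec:Solvers:Eqn:L_2_VariableRegularization} gives $\mathbf{q}_h^\top \overline{M}_{\varrho h} \mathbf{q}_h = \sum_{\tau \in \mathcal{T}_h} h_\tau^{-4} \|q_h\|_{L^2(\tau)}^2$, while $\mathbf{q}_h^\top B_h \mathbf{y}_h = b(y_h, q_h)$ by \eqref{Sec:Solvers:Eqn:Bh[j,k]}. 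Splitting $b(y_h,q_h)$ into element integrals and applying the Cauchy--Schwarz inequality followed by the local inverse inequality \eqref{Sec:FEM:LocalInverseInequality} on both factors yields, on each $\tau$,
\[
\bigl| -\langle \partial_t y_h, \partial_t q_h \rangle_{L^2(\tau)} + \langle \nabla_x y_h, \nabla_x q_h \rangle_{L^2(\tau)} \bigr| \le |y_h|_{H^1(\tau)} |q_h|_{H^1(\tau)} \le c_\text{\tiny inv}^2 \, h_\tau^{-2} \, \|y_h\|_{L^2(\tau)} \|q_h\|_{L^2(\tau)}.
\]
Summing over $\tau$ and regrouping the $h_\tau^{-2}$ weight with $\|q_h\|_{L^2(\tau)}$, one Cauchy--Schwarz inequality in $\ell^2$ gives
\[
|b(y_h,q_h)| \le c_\text{\tiny inv}^2 \, \|y_h\|_{L^2(Q)} \, \Big( \sum_{\tau \in \mathcal{T}_h} h_\tau^{-4} \|q_h\|_{L^2(\tau)}^2 \Big)^{1/2} = c_\text{\tiny inv}^2 \, \|y_h\|_{L^2(Q)} \, \bigl(\mathbf{q}_h^\top \overline{M}_{\varrho h} \mathbf{q}_h \bigr)^{1/2}.
\]
Squaring, dividing by $\mathbf{q}_h^\top \overline{M}_{\varrho h} \mathbf{q}_h$, and invoking $\|y_h\|_{L^2(Q)}^2 = \mathbf{y}_h^\top M_h \mathbf{y}_h$ delivers the claimed bound.

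The only mildly delicate point is to ensure that the local inverse inequality \eqref{Sec:FEM:LocalInverseInequality} really controls both the temporal and the spatial derivative contributions to $b(\cdot,\cdot)$, i.e., that $\nabla v_h$ in \eqref{Sec:FEM:LocalInverseInequality} is to be read as the full space-time gradient; everything else is a routine two-level application of Cauchy--Schwarz. Note that the argument is inherently elementwise and therefore applies verbatim to the variable regularization \eqref{Sec:Solvers:Eqn:L_2_VariableRegularization}, without requiring global quasi-uniformity of the mesh.
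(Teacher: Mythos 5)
Your proposal is correct and follows essentially the same route as the paper: the variational (sup) representation of the quadratic form of $B_h^\top \overline{M}_{\varrho h}^{-1} B_h$, reduction of the lumped case via $\overline{D}_{\varrho h}^{-1} \le \overline{M}_{\varrho h}^{-1}$, and two applications of the local inverse inequality \eqref{Sec:FEM:LocalInverseInequality} producing the constant $c_\text{\tiny inv}^4$, combined with \eqref{Sec:Solvers:Eqn:MhDh} for the outer bounds. The only (cosmetic) difference is organizational: the paper applies one global Cauchy--Schwarz with the weights $\varrho^{\pm 1/4}$ split between the factors and then the inverse inequality to each factor separately, whereas you apply Cauchy--Schwarz and the inverse inequality elementwise to both factors and finish with a discrete $\ell^2$ Cauchy--Schwarz over the elements; both yield the same constant, and your reading of $\nabla$ in \eqref{Sec:FEM:LocalInverseInequality} as the full space-time gradient matches the paper's usage.
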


\begin{proof}
  Using the spectral equivalence inequalities
  \eqref{Sec:Solvers:Eqn:overlineMhoverlineDh}, Cauchy's inequalities, and
  the inverse inequalities \eqref{Sec:FEM:LocalInverseInequality},
  we get the estimates
  \begin{eqnarray*}
    && \hspace*{-5mm}
       (B^\top_h\overline{D}_{\varrho h}^{-1} B_h \mathbf{y}_h,\mathbf{y}_h) 
    \, \le \, (B^\top_h\overline{M}_{\varrho h}^{-1} B_h \mathbf{y}_h,\mathbf{y}_h)
            =
            \sup_{\mathbf{q}_h \in \mathbb{R}^{m_h}}
            \frac{(B_h \mathbf{y}_h,\mathbf{q}_h)^2}
            {(\overline{M}_{\varrho h}\mathbf{q}_h,\mathbf{q}_h)} \\
    && \hspace*{-3mm} = \, \sup_{q_h \in P_h}
          \frac{\Big[ - \langle \partial_t y_h , \partial_t q_h
          \rangle_{L^2(Q)} + \langle \nabla_x y_h , \nabla_x q_h
          \rangle_{L^2(Q)} \Big]^2}
          {\langle \varrho^{-1} q_h , q_h \rangle_{L^2(Q)}} \\
    && \hspace*{-3mm} = \sup_{q_h \in P_h}
          \frac{\Big[ \langle \varrho^{\frac{1}{4}}
          \widehat{\nabla} y_h , \varrho^{-\frac{1}{4}}
          \nabla q_h\rangle_{L^2(Q)}\Big]^2}
          {\langle \varrho^{-1} q_h , q_h \rangle_{L^2(Q)}} \leq
       \sup_{q_h \in P_h}
       \frac{\|\varrho^{\frac{1}{4}} \widehat{\nabla} y_h\|_{L_2(Q)}^2
       \|\varrho^{-\frac{1}{4}} \nabla q_h\|_{L^2(Q)}^2}
       {\|\varrho^{-\frac{1}{2}} q_h\|_{L^2(Q)}^2}\\
    && \hspace*{-3mm} = \sup_{q_h \in P_h}
       \frac{\|\varrho^{\frac{1}{4}} \widehat{\nabla} y_h\|_{L_2(Q)}^2
       \sum\limits_{\tau \in {\mathcal{T}}_h} h_\tau^{-2} \,
       \| \nabla q_h \|^2_{L^2(\tau)}}
       {\|\varrho^{-\frac{1}{2}} q_h\|_{L^2(Q)}^2} \\
    && \hspace*{-3mm} \le \sup_{q_h \in P_h}
       \frac{\|\varrho^{\frac{1}{4}} \widehat{\nabla} y_h\|_{L_2(Q)}^2
       c_\text{\tiny inv}^2 \sum\limits_{\tau \in {\mathcal{T}}_h} h_\tau^{-4} \,
       \| q_h \|^2_{L^2(\tau)}}{\|\varrho^{-\frac{1}{2}} q_h\|_{L^2(Q)}^2} \\
    && \hspace*{-3mm} = \, c_\text{\tiny inv}^2
       \|\varrho^{\frac{1}{4}} \nabla y_h\|_{L_2(Q)}^2 =
       c_\text{\tiny inv}^2 \sum\limits_{\tau \in {\mathcal{T}}_h}
       h_\tau^2 \, \| \nabla y_h \|^2_{L^2(\tau)}
       \leq c_\text{\tiny inv}^4 \, \| y_h \|^2_{L^2(Q)}
       =
       c_\text{\tiny inv}^4 (M_h \mathbf{y}_h,\mathbf{y}_h)
\end{eqnarray*}
for all $\mathbf{y}_h \in \mathbb{R}^{n_h}$, $\mathbf{y}_h
\leftrightarrow y_h \in Y_h= S_h^1(\mathcal{T}_h)\cap H^{1.1}_{0;0,}(Q)$,
where $\widehat{\nabla} = (\nabla_x, - \partial_t)^\top$, and
$\nabla = (\nabla_x, \partial_t)^\top$ is the space-time gradient.
Combining theses estimates with the spectral equivalence inequalities
\eqref{Sec:Solvers:Eqn:MhDh} completes the proof of the theorem.
\end{proof}

\noindent
We note that the constant choice $\varrho = h^4$ leads to the same 
spectral equivalence inequalities
\eqref{Sec:Solvers:Eqn:L2RegSpectralEquivalenceInequalities}
as in the case of variable regularization since the constant regularization 
is a special case of variable regularization when we set $h_\tau = h$
for all $\tau \in \mathcal{T}_h$.

Thanks to \eqref{Sec:Solvers:Eqn:overlineMhoverlineDh} 
and \eqref{Sec:Solvers:Eqn:L2RegSpectralEquivalenceInequalities}, 
we can choose 
\begin{equation}\label{Sec:Solvers:Eqn:L2RegPrePBCG}
  \widehat{A}_{\varrho h} 
  = \delta (d+2)^{-1} \overline{D}_{\varrho h}
    := \delta (d+2)^{-1} \text{lump}(\overline{M}_{\varrho h})
  \; \text{and} \;
  \widehat{S}_{\varrho h} = D_h := \text{lump}(M_h)
\end{equation}
yielding the spectral equivalence constants
$\overline{c}_A = (d+2)/\delta$, $\underline{c}_S = 1/(d+2)$
and $\overline{c}_S = (c_\text{\tiny inv}^4+1)$,
where $\delta < 1$ is a properly chosen, positive scaling parameter.
Therefore, the PB-PCG is an asymptotically optimal solver for 
the SID system
\eqref{Sec:Introduction:Eqn:AbstractDiscreteReducedOptimalitySystem}
in the case of the $L^2$ regularization.

Moreover, we can replace the mass matrix
${A}_{\varrho h} = \overline{M}_{\varrho h}$ by the lumped mass matrix 
$\overline{D}_{\varrho h}:=\text{lump}(\overline{M}_{\varrho h})$ in 
the discrete optimality system
\eqref{Sec:Introduction:Eqn:AbstractDiscreteReducedOptimalitySystem}
without affecting the discretization error as was shown in
\cite{LLSY:LangerLoescherSteinbachYang:2023arXiv:2304.14664}
in the case of elliptic OCPs for $\varrho = h^4$.
Then the matrix-by-vector multiplication 
$(B^\top_h \overline{D}_{\varrho h}^{-1} B_h + M_h) * \mathbf{y}_h^n$ is fast.
Now the SC-PCG with the Schur complement preconditioner 
$\widehat{S}_{\varrho h} = D_h := \text{lump}(M_h)$ 
is an asymptotically optimal solver for the SC system
\eqref{Sec:Solvers:Eqn:Sy=yd}. This mass-lumped SC-PCG converges in the 
$B^\top_h \overline{D}_{\varrho h}^{-1} B_h + M_h$ energy norm that 
is equivalent to the $L^2(Q)$ norm on the FE space due to
Theorem~\ref{Theorem:SpectralEquivalenceTheoremL2Regularization1}. 
This is exactly the norm in which we want to approximate the target $y_d$.

\subsection{Energy Regularization}\label{Sec:Solvers:SubSec:Energy}
For the energy regularization, the regularization matrix $A_{\varrho h}$
is nothing but the SPD $m_h \times m_h$ diffusion stiffness matrix
$\overline{K}_{\varrho h}$, the coefficients of which are defined by
\begin{equation}\label{Sec:Solvers:Eqn:overlineKh[j,i]}
  \overline{K}_{\varrho h}[j,i] =
  \langle \varrho^{-1} \nabla \psi_{i}, \nabla \psi_{j} \rangle_{L^2(Q)}
  \; \forall \, j,i = 1,\ldots,m_h.
\end{equation}
Here we again permit variable regularization of the form
\begin{equation}\label{Sec:Solvers:Eqn:varroh=h2}
  \varrho(x,t) = h_\tau^2,\; \forall \, (x,t) \in \tau,\;
  \forall \tau \in \mathcal{T}_h,
\end{equation}
which we implemented in all numerical experiments when an adaptive 
mesh refinement is used. It is clear that \eqref{Sec:Solvers:Eqn:varroh=h2} 
turns to $\varrho = h^2$ in the case of uniform mesh refinement 
for which we have made the error analysis in 
Subsection~\ref{SubSec:SpaceTimeFiniteElementDiscretization:Energy}.

Now the Schur complement  $B^\top_h \overline{K}_{\varrho h}^{-1} B_h + M_h$
is again spectrally equivalent to $D_h$ as the following spectral
equivalence theorem shows.

\begin{theorem}\label{Theorem:SpectralEquivalenceTheoremEnergyRegularization1}
  Let us consider the optimally balanced, mesh-dependent, variable
  regularization \eqref{Sec:Solvers:Eqn:varroh=h2},
  and let $M_h$ as defined in \eqref{Sec:Solvers:Eqn:Mh[l,k]} with
  $D_h = \text{\rm lump}(M_h)$. Then the spectral equivalence inequalities
\begin{equation}
  \label{Sec:Solvers:Eqn:EnergyRegSpectralEquivalenceInequalities}
  (d+2)^{-1} \, D_h \, \le \,  M_h \, \le \,
  B^\top_h A_{\varrho h}^{-1} B_h + M_h 
  \, \le \, (c_\text{\tiny inv}^2+1) \, M_h 
  \, \le \, (c_\text{\tiny inv}^2+1) \, D_h
\end{equation}
hold for $A_{\varrho h} = \overline{K}_{\varrho h}$.
The constant $c_\text{\tiny inv}$ originates from the inverse inequalities 
\eqref{Sec:FEM:LocalInverseInequality}.
\end{theorem}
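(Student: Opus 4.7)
The plan is to follow the same three-step template as in the proof of Theorem~\ref{Theorem:SpectralEquivalenceTheoremL2Regularization1}, with the key simplification that for energy regularization the gradient in $\overline{K}_{\varrho h}$ pairs directly with the gradient coming out of $B_h$, so only one factor of the inverse inequality \eqref{Sec:FEM:LocalInverseInequality} is needed instead of two.

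First I would dispose of the outer inequalities: $(d+2)^{-1}D_h \le M_h$ follows from \eqref{Sec:Solvers:Eqn:MhDh}, and the lower bound $M_h \le B_h^\top A_{\varrho h}^{-1} B_h + M_h$ is trivial since $B_h^\top \overline{K}_{\varrho h}^{-1} B_h$ is symmetric positive semi-definite. The rightmost inequality $(c_\text{\tiny inv}^2+1) M_h \le (c_\text{\tiny inv}^2+1) D_h$ is again \eqref{Sec:Solvers:Eqn:MhDh}. So the whole content of the theorem reduces to the upper estimate $B_h^\top \overline{K}_{\varrho h}^{-1} B_h \le c_\text{\tiny inv}^2 \, M_h$.

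For that, I would use the standard sup-characterization of the quadratic form induced by a Schur-type product with an SPD matrix, namely
\[
  (B_h^\top \overline{K}_{\varrho h}^{-1} B_h \mathbf{y}_h,\mathbf{y}_h)
  = \sup_{\mathbf{q}_h \in \mathbb{R}^{m_h}}
     \frac{(B_h \mathbf{y}_h,\mathbf{q}_h)^2}{(\overline{K}_{\varrho h}\mathbf{q}_h,\mathbf{q}_h)}
  = \sup_{q_h \in P_h}
     \frac{\langle \widehat{\nabla} y_h , \nabla q_h \rangle_{L^2(Q)}^2}
          {\langle \varrho^{-1} \nabla q_h , \nabla q_h \rangle_{L^2(Q)}},
\]
exactly as in the $L^2$ proof, with $\widehat{\nabla}=(\nabla_x,-\partial_t)^\top$ coming from \eqref{Sec:Solvers:Eqn:Bh[j,k]} and $\nabla q_h$ in the denominator coming from \eqref{Sec:Solvers:Eqn:overlineKh[j,i]}. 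I would then insert the weight split $\langle \widehat{\nabla} y_h,\nabla q_h\rangle = \langle \varrho^{1/2}\widehat{\nabla} y_h,\varrho^{-1/2}\nabla q_h\rangle$ and apply Cauchy--Schwarz elementwise, so the numerator's $\varrho^{-1/2}\nabla q_h$ cancels against the denominator and what survives is simply $\|\varrho^{1/2}\widehat{\nabla} y_h\|_{L^2(Q)}^2$.

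At this point I use $|\widehat{\nabla} y_h|=|\nabla y_h|$ pointwise, insert the variable regularization \eqref{Sec:Solvers:Eqn:varroh=h2}, and apply the local inverse inequality \eqref{Sec:FEM:LocalInverseInequality} on each simplex:
\[
  \|\varrho^{1/2}\widehat{\nabla} y_h\|_{L^2(Q)}^2
  = \sum_{\tau\in\mathcal{T}_h} h_\tau^2\,\|\nabla y_h\|_{L^2(\tau)}^2
  \le c_\text{\tiny inv}^2\sum_{\tau\in\mathcal{T}_h}\|y_h\|_{L^2(\tau)}^2
  = c_\text{\tiny inv}^2\,(M_h\mathbf{y}_h,\mathbf{y}_h).
\]
This delivers the middle upper bound in \eqref{Sec:Solvers:Eqn:EnergyRegSpectralEquivalenceInequalities}, and together with \eqref{Sec:Solvers:Eqn:MhDh} the theorem follows. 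I do not expect any real obstacle here: the one subtle point is making sure that the matching of powers is correct, and indeed this is precisely why the energy-regularization balance $\varrho=h_\tau^2$ is exactly the one that makes $\varrho^{1/2}\cdot h_\tau^{-1}$ dimensionless in the inverse inequality, which is what yields $c_\text{\tiny inv}^2$ here (as opposed to $c_\text{\tiny inv}^4$ in the $L^2$ case where two inverse-inequality factors were needed to absorb the mass-matrix denominator).
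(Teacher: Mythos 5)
Your proposal is correct and follows essentially the same route as the paper's own proof: the sup-characterization of the Schur complement quadratic form, the weight split $\langle \varrho^{1/2}\widehat{\nabla} y_h,\varrho^{-1/2}\nabla q_h\rangle$ with Cauchy--Schwarz so that the denominator cancels, the pointwise identity $|\widehat{\nabla} y_h|=|\nabla y_h|$, and the elementwise inverse inequality with $\varrho=h_\tau^2$ yielding the single factor $c_\text{\tiny inv}^2$. Your closing observation about why one inverse-inequality factor suffices here versus two in the $L^2$ case matches the structure of the paper's argument exactly.
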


\begin{proof}
  Using the spectral equivalence inequalities
  \eqref{Sec:Solvers:Eqn:overlineMhoverlineDh}, Cauchy's inequalities,
  and the inverse inequalities \eqref{Sec:FEM:LocalInverseInequality},
  we get the estimates
  \begin{eqnarray*}
    && \hspace*{-5mm}
       (B^\top_h\overline{K}_{\varrho h}^{-1} B_h \mathbf{y}_h,\mathbf{y}_h) 
        =\sup_{\mathbf{q}_h \in \mathbb{R}^{m_h}}
        \frac{(B_h \mathbf{y}_h,\mathbf{q}_h)^2}
        {(\overline{K}_{\varrho h}\mathbf{q}_h,\mathbf{q}_h)}\\
    && = \sup_{q_h\in P_h }
        \frac{\Big[ - \langle \partial_t y_h , \partial_t q_h
        \rangle_{L^2(Q)} + \langle \nabla_x y_h, \nabla_x q_h
        \rangle_{L^2(Q)} \Big]^2}
        {\langle \varrho^{-1} \nabla q_h , \nabla q_h \rangle_{L^2(Q)}} \\
    && = \sup_{q_h \in P_h}
        \frac{\Big[ \langle \varrho^{\frac{1}{2}} \widehat{\nabla} y_h,
        \varrho^{-\frac{1}{2}} \nabla q_h\rangle_{L^2(Q)}\Big]^2}
        {\langle \varrho^{-1} \nabla q_h , \nabla q_h\rangle_{L^2(Q)}} 
       \le  \sup_{q_h \in P_h}
          \frac{\|\varrho^{\frac{1}{2}} \widehat{\nabla} y_h\|_{L^2(Q)}^2
          \|\varrho^{-\frac{1}{2}} \nabla q_h\|_{L^2(Q)}^2}
          {\|\varrho^{-\frac{1}{2}} \nabla q_h\|_{L^2(Q)}^2}\\
    && = \, \|\varrho^{\frac{1}{2}} {\nabla} y_h\|_{L^2(Q)}^2
       = \sum\limits_{\tau \in {\mathcal{T}}_h} h_\tau^2 \,
       \| \nabla y_h \|^2_{L^2(\tau)}
       \le
       c_\text{\tiny inv}^2 \, \| y_h \|^2_{L^2(Q)}
       =
       c_\text{\tiny inv}^2 \, (M_h \mathbf{y}_h,\mathbf{y}_h), 
\end{eqnarray*}
for all $\mathbf{y}_h \in \mathbb{R}^{n_h}$,
$\mathbf{y}_h \leftrightarrow y_h \in Y_h=
S_h^1(\mathcal{T}_h)\cap H^{1.1}_{0;0,}(Q)$,
where $\widehat{\nabla} = (\nabla_x, - \partial_t)^T$, and
$\nabla = (\nabla_x, \partial_t)^T$ is the space-time gradient.
Combining theses estimates with the spectral equivalence inequalities
\eqref{Sec:Solvers:Eqn:MhDh} completes the proof of the theorem.
\end{proof}

\noindent
We again note that the constant choice $\varrho = h^2$ leads to the same 
spectral equivalence inequalities
\eqref{Sec:Solvers:Eqn:EnergyRegSpectralEquivalenceInequalities}
as in the case of variable regularization since the constant regularization 
is a special case of variable regularisation when we set $h_\tau = h$ for
all $\tau \in \mathcal{T}_h$.

Let us again solve the SID system
\eqref{Sec:Introduction:Eqn:AbstractDiscreteReducedOptimalitySystem}
by means of the PB-PCG. Thanks to
Theorem~\ref{Theorem:SpectralEquivalenceTheoremEnergyRegularization1},
we can use $\widehat{S}_{\varrho h} = D_h := \text{lump}(M_h)$ as very
efficient SC preconditioner with the spectral equivalence constants 
$\underline{c}_S = 1/(d+2)$ and $\overline{c}_S = (c_\text{\tiny inv}^2+1)$.
The construction of a properly scaled preconditioner $\widehat{A}_{\varrho h}$ 
for ${A}_{\varrho h} = \overline{K}_{\varrho h}$ is more involved.
In our numerical experiments, we will choose a properly scaled SPD
algebraic multigrid (AMG) preconditioners that can be represented in the form 
\begin{equation}\label{Sec:Solvers:Eqn:ScaledAMGpreconditioner} 
  \widehat{A}_{\varrho h} = \widehat{K}_{\varrho h} 
  := \delta (1-\eta^i) K_{\varrho h} (I_h - E_{\varrho h}^i)^{-1}
\end{equation}
with a positive scaling parameter $\delta < 1$, where $E_{\varrho h}$ denotes
the corresponding AMG error propagation (iteration) matrix,
and $\eta \in [0,1)$ is a bound for the convergence rate with respect to 
the ${K}_{\varrho h}$ energy norm, i.e.
$\|E_{\varrho h}\|_{K_{\varrho h}} \le \eta < 1$. We can choose the components
of the multigrid preconditioner 
${K}_{\varrho h} (I_h - E_{\varrho h}^i)^{-1}$ in such a way that it is SPD,
$E_{\varrho h}$ is self-adjoint and not negative in the ${K}_{\varrho h}$
energy inner product, and 
\begin{equation}
\label{Sec:Solvers:Eqn:AMGpreconditioner:SpectralEquivalenceInequalities1} 
 (1-\eta^i)   K_{\varrho h} (I_h - E_{\varrho h}^i)^{-1} \le K_{\varrho h} \le
 K_{\varrho h} (I_h - E_{\varrho h}^i)^{-1};
\end{equation}
see \cite{LLST:JungLangerMeyerQueckSchneider:1989a} for details. The
spectral equivalence inequalities
\eqref{Sec:Solvers:Eqn:AMGpreconditioner:SpectralEquivalenceInequalities1} 
immediately yield \eqref{Sec:Solvers:Eqn:SpectralEquivalenceInequalities:A}
with $\overline{c}_A = 1/(\delta (1 - \eta^i))$.
Therefore, due to this result and the results of
Theorem~\ref{Theorem:SpectralEquivalenceTheoremEnergyRegularization1},
the PB-PCG with \eqref{Sec:Solvers:Eqn:ScaledAMGpreconditioner} 
and again $\widehat{S}_{\varrho h} = D_h := \text{lump}(M_h)$
is an asymptotically optimal solver for the SID system
\eqref{Sec:Introduction:Eqn:AbstractDiscreteReducedOptimalitySystem}
in the case of the energy regularization too, where $i=1$ is a good choice.
We note that, in the case of constant $\varrho = h^2$, we 
have $\widehat{K}_{\varrho h} = \varrho^{-1} K_h (I_h - E_h^i)^{-1}$,
where  $K_h = K_{1 h}$ and $E_h = E_{1 h}$.

In order to solve the corresponding Schur complement system
\eqref{Sec:Solvers:Eqn:Sy=yd} efficiently by means of PCG, we replace
$A_{\varrho h}^{-1}$ by an iterative approximation,
e.g. produced by AMG as in our numerical experiments, 
i.e., instead of  the exact Schur complement system, we solve the
inexact Schur complement system
\begin{equation}\label{Sec:Solvers:Eqn:InexactSy=yd}
  (B_h^\top (I_h - E_{\varrho h}^{i}) {K}_{\varrho h}^{-1}B_h+M_h)
  \mathbf{\tilde y}_h = \mathbf{y}_{dh}
\end{equation}
where we want to choose $i$ such that
$\|{\tilde y}_h - y_d\|_{L^2(Q)} = {\mathcal{O}}(\|y_h - y_d\|_{L^2(Q)}) =
{\mathcal{O}}(h^s)$. It is obviously sufficient to show that
$\|{\tilde y}_h - y_h\|_{L^2(Q)} = {\mathcal{O}}(h^s)$.

\begin{lemma}
\label{Sec:Solvers:Lemma:ErrorEstimate}
Let us choose the optimally balanced regularization $\varrho$ as given
by \eqref{Sec:Solvers:Eqn:varroh=h2},
and let $\|E_{\varrho h}\|_{K_{\varrho h}} \le \eta$ with some $h$-independent
rate $\eta \in (0,1)$. Then the estimates
\begin{equation}
  \|\mathbf{\tilde y}_h - \mathbf{y}_h\|_{M_h} =
  \|{\tilde y}_h - y_h\|_{L^2(Q)} 
  \le  c_\text{\tiny inv}^2 \eta^i \, \|\mathbf{\tilde y}_h\|_{M_h}
  \le  c_\text{\tiny inv}^2 \eta^i \, \|y_d\|_{L^2(Q)}
\end{equation}
hold.
\end{lemma}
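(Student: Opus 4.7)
The plan is to derive the estimate by comparing the inexact Schur complement system \eqref{Sec:Solvers:Eqn:InexactSy=yd} with the exact one \eqref{Sec:Solvers:Eqn:Sy=yd}, exploiting the spectral equivalences already established in Theorem~\ref{Theorem:SpectralEquivalenceTheoremEnergyRegularization1} together with the assumed $K_{\varrho h}$-energy bound on the AMG error propagator.

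First, I would subtract the two right-hand sides. Since $S_{\varrho h}\mathbf{y}_h = \mathbf{y}_{dh} = \widetilde{S}_{\varrho h}\mathbf{\tilde y}_h$ with $\widetilde{S}_{\varrho h} = B_h^\top(I_h-E_{\varrho h}^i)K_{\varrho h}^{-1}B_h + M_h$, rearranging yields the error equation
\[
  S_{\varrho h}(\mathbf{\tilde y}_h - \mathbf{y}_h)
  = B_h^\top E_{\varrho h}^i K_{\varrho h}^{-1}B_h\,\mathbf{\tilde y}_h .
\]
Setting $e := \mathbf{\tilde y}_h - \mathbf{y}_h$ and using $M_h \le S_{\varrho h}$ from \eqref{Sec:Solvers:Eqn:EnergyRegSpectralEquivalenceInequalities}, I estimate $\|e\|_{M_h}^2 \le \|e\|_{S_{\varrho h}}^2 = (S_{\varrho h}e,e)$ and substitute the error equation on the right-hand side. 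The crucial step is then to write the resulting expression in the $K_{\varrho h}$ inner product, with $u := K_{\varrho h}^{-1}B_h\mathbf{\tilde y}_h$ and $w := K_{\varrho h}^{-1}B_h e$, so that $(B_h^\top E_{\varrho h}^i K_{\varrho h}^{-1}B_h\mathbf{\tilde y}_h,e) = (K_{\varrho h}E_{\varrho h}^i u,w)$. A Cauchy--Schwarz inequality in $\langle K_{\varrho h}\cdot,\cdot\rangle$ together with $\|E_{\varrho h}^i u\|_{K_{\varrho h}} \le \eta^i\|u\|_{K_{\varrho h}}$ yields
\[
  \|e\|_{S_{\varrho h}}^2 \le \eta^i
  \sqrt{(B_h^\top K_{\varrho h}^{-1}B_h\mathbf{\tilde y}_h,\mathbf{\tilde y}_h)}\;
  \sqrt{(B_h^\top K_{\varrho h}^{-1}B_h e,e)} .
\]
Here I would invoke Theorem~\ref{Theorem:SpectralEquivalenceTheoremEnergyRegularization1} once more, which gives $B_h^\top K_{\varrho h}^{-1}B_h \le c_{\text{\tiny inv}}^2 M_h$. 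Applying this to both factors and absorbing $\|e\|_{M_h}$ on the left delivers the first inequality $\|e\|_{M_h}\le c_{\text{\tiny inv}}^2\eta^i\|\mathbf{\tilde y}_h\|_{M_h}$.

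For the second inequality, I test $\widetilde{S}_{\varrho h}\mathbf{\tilde y}_h = \mathbf{y}_{dh}$ with $\mathbf{\tilde y}_h$. The symmetry of $E_{\varrho h}$ in $\langle K_{\varrho h}\cdot,\cdot\rangle$ together with $\|E_{\varrho h}\|_{K_{\varrho h}}\le \eta < 1$ ensures that $I_h - E_{\varrho h}^i$ is positive semi-definite in the $K_{\varrho h}$ inner product; consequently $(B_h^\top (I_h-E_{\varrho h}^i)K_{\varrho h}^{-1}B_h\mathbf{\tilde y}_h,\mathbf{\tilde y}_h) \ge 0$, leaving $\|\mathbf{\tilde y}_h\|_{M_h}^2 \le (\mathbf{y}_{dh},\mathbf{\tilde y}_h) = \langle y_d,\tilde y_h\rangle_{L^2(Q)}$. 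A final Cauchy--Schwarz inequality in $L^2(Q)$ and cancellation of $\|\mathbf{\tilde y}_h\|_{M_h} = \|\tilde y_h\|_{L^2(Q)}$ gives $\|\mathbf{\tilde y}_h\|_{M_h}\le \|y_d\|_{L^2(Q)}$, which chains onto the first bound.

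The main obstacle is the bookkeeping in the middle step: one has to switch between the Euclidean inner product (for the matrix identities) and the $K_{\varrho h}$ inner product (where the convergence bound on $E_{\varrho h}$ lives), and to deploy Theorem~\ref{Theorem:SpectralEquivalenceTheoremEnergyRegularization1} cleanly twice---once as $M_h \le S_{\varrho h}$ to pass from the Schur complement norm to the mass-matrix norm, and once as $B_h^\top K_{\varrho h}^{-1}B_h \le c_{\text{\tiny inv}}^2 M_h$ to produce the factor $c_{\text{\tiny inv}}^2$ in the final bound. Everything else is standard Galerkin perturbation analysis.
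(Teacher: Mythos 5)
Your argument is correct and follows essentially the same route as the paper's proof: subtract the exact from the inexact Schur complement system, bound $\|\mathbf{\tilde y}_h-\mathbf{y}_h\|_{M_h}^2$ by the $S_{\varrho h}$-energy via \eqref{Sec:Solvers:Eqn:EnergyRegSpectralEquivalenceInequalities}, pass to the $K_{\varrho h}$ inner product to exploit $\|E_{\varrho h}\|_{K_{\varrho h}}\le\eta$, and convert back with $B_h^\top K_{\varrho h}^{-1}B_h\le c_{\text{\tiny inv}}^2 M_h$. The only difference is that you spell out the final bound $\|\mathbf{\tilde y}_h\|_{M_h}\le\|y_d\|_{L^2(Q)}$ (testing the inexact system with $\mathbf{\tilde y}_h$ and using the positive semi-definiteness of $B_h^\top(I_h-E_{\varrho h}^i)K_{\varrho h}^{-1}B_h$), which the paper merely asserts.
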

\begin{proof}
Substracting the exact SC system \eqref{Sec:Solvers:Eqn:Sy=yd} from the 
inexact SC system \eqref{Sec:Solvers:Eqn:InexactSy=yd},
multiplying  this difference by the error $\mathbf{\tilde y}_h - \mathbf{y}_h$,
and using \eqref{Sec:Solvers:Eqn:EnergyRegSpectralEquivalenceInequalities},
we arrive at the estimates
\begin{eqnarray*}
  \|\mathbf{\tilde y}_h - \mathbf{y}_h\|_{M_h}^2
  &\le& ((B_h^\top K_{\varrho h}^{-1}B_h + M_h)
        (\mathbf{\tilde y}_h - \mathbf{y}_h),
        \mathbf{\tilde y}_h - \mathbf{y}_h)  \\
  &=& (K_{\varrho h}E_{\varrho h}^{i}K_{\varrho h}^{-1}B_h\mathbf{\tilde y}_h,
      K_{\varrho h}^{-1}B_h(\mathbf{\tilde y}_h - \mathbf{y}_h))\\
  &=& (E_{\varrho h}^{i} \mathbf{x}_h,\mathbf{z}_h)_{K_{\varrho h}}
      \le \|E_{\varrho h}^{i} \mathbf{x}_h\|_{K_{\varrho h}}
      \|\mathbf{z}_h\|_{K_{\varrho h}}\\
  &\le& \|E_{\varrho h}\|_{K_{\varrho h}}^{i} \|\mathbf{x}_h\|_{K_{\varrho h}}
        \|\mathbf{z}_h\|_{K_{\varrho h}}\\
  &\le& \eta^{i} (B_h^\top K_{\varrho h}^{-1}B_h\mathbf{\tilde y}_h,
        \mathbf{\tilde y}_h)^{1/2}
        (B_h^\top K_{\varrho h}^{-1}B_h (\mathbf{\tilde y}_h - \mathbf{y}_h),
        \mathbf{\tilde y}_h - \mathbf{y}_h)^{1/2}\\
  &\le& \eta^{i}c_\text{\tiny inv}^2 \,
        \|\mathbf{\tilde y}_h\|_{M_h}
        \|\mathbf{\tilde y}_h - \mathbf{y}_h\|_{M_h}
\end{eqnarray*}
where we used the setting
$\mathbf{x}_h =K_{\varrho h}^{-1}B_h\mathbf{\tilde y}_h$ 
and $\mathbf{z}_h =K_{\varrho h}^{-1}B_h(\mathbf{\tilde y}_h - \mathbf{y}_h)$ 
to simplify long notations. From the inexact SC system
\eqref{Sec:Solvers:Eqn:InexactSy=yd}, we can derive the estimate
$\|\mathbf{\tilde y}_h\|_{M_h} \le \|y_d\|_{L^2(Q)}$
that completes the proof of the lemma.
\end{proof}

\begin{lemma}\label{Sec:Solvers:Lemma:SpectralEstimate} 
  Let $\|E_{\varrho h}\|_{K_{\varrho h}} \le \eta$ with some $h$-independent
  rate $\eta \in (0,1)$.
  Then the following spectral equivalence inequalities are valid:
  \begin{equation}
    \label{Sec:Solvers:Eqn:AMGpreconditioner:SpectralEquivalenceInequalities2} 
    0 \le (1-\eta^i)\, B_h^\top K_{\varrho h}^{-1}B_h \le
    B_h^\top (I_h - E_{\varrho h}^{i}) {K}_{\varrho h}^{-1}B_h \le
    B_h^\top K_{\varrho h}^{-1}B_h.
  \end{equation}
\end{lemma}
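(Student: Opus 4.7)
The plan is to reduce this lemma directly to the operator-level spectral equivalence inequalities \eqref{Sec:Solvers:Eqn:AMGpreconditioner:SpectralEquivalenceInequalities1}, which were stated earlier under the very same hypothesis $\|E_{\varrho h}\|_{K_{\varrho h}}\le \eta<1$ together with the standing structural assumption that $E_{\varrho h}$ is self-adjoint and non-negative in the $K_{\varrho h}$ energy inner product. No further algebra on $E_{\varrho h}$ itself will be needed.

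First, I would record the easy structural facts. Self-adjointness of $E_{\varrho h}$ in the $K_{\varrho h}$-inner product means $K_{\varrho h}E_{\varrho h}=E_{\varrho h}^\top K_{\varrho h}$, from which one checks directly that the matrix $(I_h-E_{\varrho h}^i)K_{\varrho h}^{-1}$ is symmetric. Together with non-negativity of $E_{\varrho h}$ and the bound $\|E_{\varrho h}\|_{K_{\varrho h}}\le \eta$, the $K_{\varrho h}$-eigenvalues of $E_{\varrho h}^i$ lie in $[0,\eta^i]$, so the $K_{\varrho h}$-eigenvalues of $I_h-E_{\varrho h}^i$ lie in $[1-\eta^i,1]$. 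Hence $(I_h-E_{\varrho h}^i)K_{\varrho h}^{-1}$ is SPD, and so is $B_h^\top(I_h-E_{\varrho h}^i)K_{\varrho h}^{-1}B_h$.

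Next, I would take \eqref{Sec:Solvers:Eqn:AMGpreconditioner:SpectralEquivalenceInequalities1}, namely
\[
(1-\eta^i)\, K_{\varrho h}(I_h-E_{\varrho h}^i)^{-1}\le K_{\varrho h}\le K_{\varrho h}(I_h-E_{\varrho h}^i)^{-1},
\]
and invert the Löwner chain (permissible because all three matrices are SPD). Inversion reverses the order and yields
\[
(1-\eta^i)\,K_{\varrho h}^{-1}\le (I_h-E_{\varrho h}^i)K_{\varrho h}^{-1}\le K_{\varrho h}^{-1}.
\]

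Finally, I would congruence-transform by $B_h$, using the elementary fact that $P\le Q$ (in the Löwner sense) implies $B_h^\top P B_h\le B_h^\top Q B_h$, since for every vector $\mathbf{v}$ one has $\langle B_h^\top P B_h\mathbf{v},\mathbf{v}\rangle=\langle P B_h\mathbf{v},B_h\mathbf{v}\rangle\le\langle Q B_h\mathbf{v},B_h\mathbf{v}\rangle$. Applying this to the chain above gives exactly the two non-trivial inequalities of \eqref{Sec:Solvers:Eqn:AMGpreconditioner:SpectralEquivalenceInequalities2}; the leftmost bound by $0$ is immediate because $B_h^\top K_{\varrho h}^{-1}B_h$ is symmetric positive semi-definite and $1-\eta^i\ge 0$. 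There is no real obstacle here, the only point requiring care is checking the symmetry of $(I_h-E_{\varrho h}^i)K_{\varrho h}^{-1}$ so that the Löwner ordering is meaningful, and this is handled by the $K_{\varrho h}$-self-adjointness of $E_{\varrho h}$ recalled at the beginning.
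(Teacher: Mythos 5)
Your proposal is correct and follows exactly the route the paper takes: the paper's proof is a one-line reduction to the inequalities \eqref{Sec:Solvers:Eqn:AMGpreconditioner:SpectralEquivalenceInequalities1}, and you have merely filled in the standard details (inversion reverses the L\"owner order for SPD matrices, followed by the congruence transformation with $B_h$). Nothing is missing or different in substance.
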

  
\begin{proof}
  The spectral equivalence inequalities
  \eqref{Sec:Solvers:Eqn:AMGpreconditioner:SpectralEquivalenceInequalities2}
  now follow from the spectral equivalence inequalities
  \eqref{Sec:Solvers:Eqn:AMGpreconditioner:SpectralEquivalenceInequalities1}. 
\end{proof}

\noindent
Lemma~\ref{Sec:Solvers:Lemma:SpectralEstimate} immediately yields that the
inexact SC $B_h^\top (I_h - E_{\varrho h}^{i}) {K}_{\varrho h}^{-1}B_h + M_h$
satisfies the same spectral equivalence inequalities
\eqref{Sec:Solvers:Eqn:EnergyRegSpectralEquivalenceInequalities}
like the exact SC $B_h^\top - E_{\varrho h}^{i}) {K}_{\varrho h}^{-1}B_h+ M_h$.
Thus, the inexact SC system \eqref{Sec:Solvers:Eqn:InexactSy=yd} can be
solved by means of the $D_h := \text{lump}(M_h)$ preconditioned PCG
requiring ${\mathcal{O}}(\ln(\varepsilon^{-1}))$
to reduce the initial error by some given factor $\varepsilon \in (0,1)$ 
in the $M_h$ energy norm that is equivalent to the energy norm defined 
by the inexact SC. Lemma~\ref{Sec:Solvers:Lemma:SpectralEstimate} states
that the discretization error is asymptotically not affected by the
inner iterations provided that the number $i$ of inner iterations is fixed 
to $i= \ln(h^{-1})$. If the inner iteration has an $h$-independent rate
$\eta \in (0,1)$ and  asymptotically optimal arithmetical complexity like
AMG, and if we choose $\varepsilon = {\mathcal{O}}(h^s)$ 
then the PCG need $k=\mathcal{O}(\ln(h^{-1}))$ iterations  and
${\mathcal{O}}((\ln(h^{-1}))^2 h^{-d})$ arithmetical operations
to produce an approximation
$\mathbf{\tilde y}_h^k \leftrightarrow {\tilde y}_h^k$ which differs from
$y_d$ in the order ${\mathcal{O}}(h^s)$ of the discretization error
with respect to the $L^2(Q)$ norm. It is clear that the arithmetical
complexity can be reduced to ${\mathcal{O}}((\ln(h^{-1})) h^{-d})$
by using a nested iteration setting on a sequence of finer and finer meshes 
that can be generated adaptively; see
Tables~\ref{Table:wave_ex3_energy_2d_nested} for numerical results using
nested iterations on uniformly and adaptively refined meshes
in $2$ space dimension.
\section{Numerical Results}
\label{Sec:NumericalResults}
We perform numerical experiments for three different benchmark examples 
with targets $y_d$ possessing different regularity:

\begin{itemize}
\item {\bf Example~1:} {\it Smooth Target}, where the target function
  is defined by
  \begin{equation}
    \label{Sec:NumericalResults:Eqn:Example1:SmoothTarget}
    y_d(x,t) = t^2 \prod_{i=1}^d{\sin(\pi x_i)} \in
    C^\infty(\overline{Q})\cap H^{1,1}_{0;0,}(Q) \subset Y.
  \end{equation}
\item {\bf Example~2:} {\it  Continuous Target} that is given by
  the continuous piecewise multi-linear (tri-linear for $d=2$) 
  target  function
  \begin{equation}
    \label{Sec:NumericalResults:Eqn:Example2:ContinuousTarget}
    y_{d}(x,t) =\phi(t)\prod_{i=1}^{d}\phi(x_i)\in
    H_0^{3/2-\varepsilon}(Q),\; \varepsilon>0,
  \end{equation}
  where
  \begin{equation*}
    \phi(s)=
    \begin{cases}
      1, & \textup{ if } s=0.5,\\
      0, & \textup{ if } s\notin [0.25, 0.75],\\
      \textup{linear}, & \textup{else}.
    \end{cases}
  \end{equation*}
  We note that this target function belongs to the state space $Y$ too.
\item {\bf Example~3:} {\it  Discontinuous Target} that is defined by
  the discontinuous function
  \begin{equation}\label{Sec:NumericalResults:Eqn:Example3:DiscontinuousTarget}
    y_d(x,t)=
    \begin{cases}
    1, \; \textup{ if } (x,t)\in (0.25, 0.75)^{d+1}\subset Q, \\
    0, \; \textup{ else},
  \end{cases}
  \in H^{1/2-\varepsilon}(Q), \; \varepsilon>0,
\end{equation}
which does not belong to the the state space $Y$.
\end{itemize}

\noindent
For $d=2$, the space-time domain is given by
$Q=\Omega\times (0, T) \subset \mathbb{R}^3$ with
$\Omega=(0,1)^2$ and $T=1$. The domain $Q$ is uniformly decomposed into
$384$ tetrahedrons with $5$ equidistant vertices in each direction. 
This yields an initial coarse mesh with $125$ vertices in
total and the mesh size $h = 2^{-(l+1)} = 0.25$ at the level $l = 1$.
The uniform refinement of the tetrahedrons
is based on Bey's algorithm  as described in \cite{LLSY:JB95}. 
This uniform refinement results in $(2^{l+1} +1)^{d+1=3}$ vertices, 
and the mesh size $h=2^{-(l+1)}$ that yields 
$\varrho = h^4 = 2^{-4(l+1)}$ ($L^2$-regularization) and
$\varrho = h^2 = 2^{-2(l+1)}$ (energy regularization),
where $l$ is running from $1$ (coarsest mesh) to $L=6$ (finest mesh).

In the case of three space dimensions $d=3$, we consider the space-time
domain $Q=\Omega\times(0, T) \subset \mathbb{R}^4$ with $\Omega=(0,1)^3$
and $T=1$. The initial decomposition of $Q$ contains $178$ vertices and
$960$ pentatops. The refinement of the pentatops uses the bisection method
proposed in \cite{LLSY:RS08}. The mesh size is
$h\approx (\textup{\#Vertices})^{-1/4}=2.74$e$-1$
with $\textup{\#Vertices}=178$ on the starting level $l=1$,
  and, on the finest level $L=17$, the mesh size is
  $h\approx (\textup{\#Vertices})^{-1/4}=2.22$e$-2$ with
  $\textup{\#Vertices}=4,144,513$.

Besides the uniform mesh refinement described above for $d=2$ and $d=3$,
we also provide numerical experiments for an adaptive mesh refinement 
based on the computable error representation
\begin{equation}\label{Sec:NumericalResults:Eqn:ErrorRepresentation}
\|y_{\varrho h} - y_d \|_{L^2(Q)}^2 
= \sum_{\tau \in \mathcal{T}_h} \|y_{\varrho h} - y_d \|_{L^2(\tau)}^2,
\end{equation}
and the  maximum marking strategy, i.e., an element $\tau \in \mathcal{T}_h$
will be refined if 
$\| y_{\varrho h} - y_d\|_{L^2(\tau)} \ge
\theta \, \max_{\tau \in \mathcal{T}_h} \| y_{\varrho h} - y_d\|_{L^2(\tau)}$,
where we have chosen $\theta = 0.5$;
cf.~\cite{LLSY:BabuskaVogelius:1984NumerMath}.

In our numerical experiments, we compare the performance of the 
SC-PCG and the BP-PCG, presented in the preceding section,  
with the standard preconditioned GMRES (PGMRES) for solving 
the equivalent non-symmetric and positive definite system 
\begin{equation*}
  \begin{bmatrix}
    A_{\varrho h} & B_h\\
    -B_h^\top & M_h
  \end{bmatrix}
  \begin{bmatrix}
   \mathbf{p}_h\\ 
   \mathbf{y}_h\\ 
  \end{bmatrix}
  =
  \begin{bmatrix}
    \mathbf{0}_h\\ 
    \mathbf{y}_{dh}\\ 
  \end{bmatrix},
\end{equation*}
with the block-diagonal matrix
\begin{equation*}
  \label{Sec:NumericalResults:GMRESPreconditioner}
  \begin{bmatrix}
    \widehat{A}_{\varrho h} & 0\\
    0 & \textup{ lump } (M_h)
  \end{bmatrix},
\end{equation*}
as preconditioner, where 
$\widehat{A}_{\varrho h}=  \text{lump}(\overline{M}_{\varrho h})$
for the $L^2$ regularization, and, in the case of the energy regularization,
$\widehat{A}_{\varrho h}=A_{\varrho h}(I_h-E_{\varrho h}^j)^{-1}$  
is defined by  the classical Ruge--St\"{u}ben algebraic multigrid (AMG)
preconditioner \cite{LLSY:RugeStuben} with 
$j=2$ AMG V-cycles and $2$ Gauss--Seidel pre-smoothing
and post-smoothing steps at each level. 
The SC-PCG is always preconditioned by $D_h = \text{lump}(M_h)$ 
as discussed in Section~\ref{Sec:Solvers}.
SC-CG means that we run the CG without any preconditioning.
We always solve the SC with $\widehat{A}_{\varrho h} = \text{lump}(M_h)$.

We stop the iterations as soon as the initial error is reduced  by a factor
of $10^{11}$ in the norm that is defined by the square root of scalar
product between the preconditioned residual and the residual. For instance,
in the case of the BP-PCG iteration, this norm is nothing but the  
${\mathcal K}_{h} \widehat{\mathcal K}_{h}^{-1}{\mathcal K}_{h}$ energy norm,
i.e. $\|\cdot\|_{{\mathcal K}_{h} \widehat{\mathcal K}_{h}^{-1}{\mathcal K}_{h}}
= (\widehat{\mathcal K}_{h}^{-1}{\mathcal K}_{h}\,\cdot\,,
{\mathcal K}_{h}\,\cdot\,)^{1/2}$. The initial guess is always the zero 
vector with exception of the nested iteration 
where we interpolate the initial guess from the coarser mesh.

In the following two subsection, $\|\cdot\|$ always denotes the 
$L^2$ norm $\|\cdot\|_{L^2(Q)}$.

\subsection{$L^2$-Regularization and Mass Lumping}
\label{SubSec:L2Regularization}
In the BP-PCG, we use \eqref{Sec:Solvers:Eqn:L2RegPrePBCG} for
$\widehat{A}_{\varrho h}$ and $\widehat{S}_{\varrho h}$, where we have set
$\delta=0.98$. Further, for the inverse operation of $M_{\varrho h}^{-1}$ applied
to a given vector $\mathbf{v}$ inside each SC-PCG/CG iteration, we apply the
Ruge--St\"{u}ben AMG \cite{LLSY:RugeStuben} method to solve
$M_{\varrho h}\mathbf{w}=\mathbf{v}$ until the
relative preconditioned residual error is reduced by a factor of $10^{12}$.

We first consider the case of the uniform refinement of the space-time 
cylinder $Q \subset \mathbb{R}^3$($d=2$) across $6$ levels of refinement. 
Tables~\ref{Table:wave_ex1_l2_2d}, \ref{Table:wave_ex2_l2_2d}, and
\ref{Table:wave_ex3_l2_2d} provide the numerical results for 
{\bf Examples~1}, {\bf 2}, and {\bf 3}, respectively. In the second column
of the tables, we observe that the discretization error $\|y_{\varrho h}-y_d\|$ 
behaves like expected from the theoretical results presented in
Subsection~\ref{SubSec:SpaceTimeFiniteElementDiscretization:L2}.
More precisely, the experimental order of convergence (EOC) 
corresponds to the regularity of the target. The third column of the tables
displays the  iteration numbers needed 
to reduce the initial error by the factor $10^{-11}$
for the PGMRES, the SC-PCG/CG and the PB-PCG solvers. 
Here we see that the robustness of the proposed preconditioners is
confirmed by almost mesh-independent iteration numbers for all solvers.

\begin{table}[ht]
  {\small
  \begin{tabular}{|cr|cc|ccc|}
    \hline
    \multicolumn{2}{|c|}{}&\multicolumn{2}{c|}{Convergence}&\multicolumn{3}{c|}{Solvers (Number of Iterations)}\\
    \hline
    Level&\#Vertices
    & 
    $\|y_{\varrho h}-y_d\|$ & EOC & PGMRES &SC-PCG/CG& PB-PCG \\
    \hline
    $1$&$125$&$1.166$e$-1$&$-$ &$67$&$60/65$&$97$ \\
    $2$&$729$&$2.688$e$-2$&$2.12$&$325$&$239/317$&$290$\\
    $3$&$4,913$&$5.564$e$-3$&$2.27$ &$403$&$256/377$&$301$\\
    $4$&$35,937$&$1.105$e$-3$&$2.33$&$400$&$250/389$&$293$\\
    $5$&$274,625$&$2.138$e$-4$&$2.37$&$393$&$241/395$&$284$\\
    $6$&$2,146,689$&$4.172$e$-5$&$2.36$&$381$&$235/398$&$276$\\ 
    \hline
  \end{tabular}
  }
  \caption{{\bf Example~1} 
    (Smooth Target \ref{Sec:NumericalResults:Eqn:Example1:SmoothTarget},
    $d=2$, $L^2$ regularization): 
  Convergence in the $L^2(Q)$-norm,
  and number of iterations 
  for attaining the relative accuracy $10^{-11}$.}
  \label{Table:wave_ex1_l2_2d}
\end{table}

\begin{table}[ht]
  {\small
  \begin{tabular}{|cr|cc|ccc|}
    \hline
    \multicolumn{2}{|c|}{}&\multicolumn{2}{c|}{Convergence}&\multicolumn{3}{c|}{Solvers (Number of Iterations)}\\
    \hline
    Level & \#Vertices
    &
    $\|y_{\varrho h}-y_d\|
    $  & EOC & PGMRES &SC-PCG/CG  & PB-PCG \\
    \hline
    $1$&$125$&$5.668$e$-2$&$-$ &$66$    &$59/61$ &$98$ \\
    $2$&$729$&$4.069$e$-2$&$0.48$&$350$ &$248/324$ &$305$\\
    $3$&$4,913$&$1.454$e$-2$&$1.49$&$436$ &$267/361$ &$313$\\
    $4$&$35,937$&$4.808$e$-3$&$1.60$&$429$ &$257/345$ &$302$\\
    $5$&$274,625$&$1.727$e$-3$&$1.48$&$415$ &$244/321$ &$287$\\
    $6$&$2,146,689$&$6.121$e$-4$&$1.50$&$399$ &$236/291$ &$277$\\
    \hline
  \end{tabular}
  }
  \caption{{\bf Example~2} 
  (Continuous Target \eqref{Sec:NumericalResults:Eqn:Example2:ContinuousTarget}, $d=2$, $L^2$ regularization): 
  Convergence in the $L^2(Q)$-norm, and number of iterations for attaining the relative accuracy $10^{-11}$.}
  \label{Table:wave_ex2_l2_2d}
\end{table}

\begin{table}[ht]
  {\small
  \begin{tabular}{|cr|cc|ccc|}
    \hline
    \multicolumn{2}{|c|}{}&\multicolumn{2}{c|}{Convergence}&\multicolumn{3}{c|}{Solvers (Number of Iterations)}\\
    \hline
    Level&\#Vertices
    &
    $\|y_{\varrho h}-y_d\|$  
    & EOC & PGMRES &SC-PCG/CG & PB-PCG \\
    \hline
    $1$&$125$&$2.668$e$-1$&$-$ & $68$ &$57/65$&$105$ \\
    $2$&$729$&$2.085$e$-1$&$0.36$&$347$ &$246/316$&$299$\\
    $3$&$4,913$&$1.562$e$-1$&$0.42$ &$432$ &$270/367$&$315$\\
    $4$&$35,937$&$1.128$e$-1$&$0.47$ &$442$&$268/364$&$312$\\
    $5$&$274,625$&$8.064$e$-2$&$0.48$ &$445$&$263/344$&$308$\\
    $6$&$2,146,689$&$5.734$e$-2$&$0.49$ &$442$&$259/316$&$302$\\
    \hline
  \end{tabular}
  }
  \caption{{\bf Example~3} 
    (Discontinuous Target \eqref{Sec:NumericalResults:Eqn:Example3:DiscontinuousTarget}, $d=2$, $L^2$ regularization): 
    Convergence in the $L^2(Q)$-norm, and number of iterations for attaining the relative accuracy $10^{-11}$.}
  \label{Table:wave_ex3_l2_2d}
\end{table}

For the $L^2$ regularization, we also consider the mass-lumped Schur
complement system $(B_h^\top \overline{D}_{\varrho h}^{-1}B_h + M_h)
\mathbf{y}_h = \mathbf{y}_{dh}$, which is solved by means of the PCG method
preconditioned by the lumped mass matrix $D_h$. The numerical behavior of
the $L^2$ error between the space-time finite element state approximations
$y_{\varrho h}$ and  the three targets $y_d$ as well as the number of
mass-lumped SC-PCG iterations are shown in
Tables~\ref{Table:wave_ex1ex2ex3_l2_inexactschur_2d} and
\ref{Table:wave_ex1ex2ex3_l2_inexactschur_3d} for two and three space
dimensions, respectively. We observe that the
convergence rates depend on the regularity of the targets as expected;
see also the convergence history illustrated in the two plots of
Figure~\ref{fig:convergence_l2_2d_3d} corresponding to $d=2$ and $d=3$.
Moreover, if we compare the errors $\|y_{\varrho h}-y_d\|$ of 
Table~\ref{Table:wave_ex1ex2ex3_l2_inexactschur_2d} with the corresponding 
errors in Tables~\ref{Table:wave_ex1_l2_2d}, \ref{Table:wave_ex2_l2_2d},
and  \ref{Table:wave_ex3_l2_2d},
then we see that the mass-lumping in the Schur complement does not affect 
the accuracy of the approximations at all. Furthermore, the mass-lumped
SC-PCG solver is robust as the almost constant iteration numbers show. 

\begin{table}[ht]
  {\small
  \begin{tabular}{|cr|cc|cc|cc|}
    \hline
    \multicolumn{2}{|c|}{}&\multicolumn{2}{c|}{Target
      (\ref{Sec:NumericalResults:Eqn:Example1:SmoothTarget})}&\multicolumn{2}{c|}{Target
      (\ref{Sec:NumericalResults:Eqn:Example2:ContinuousTarget})}&\multicolumn{2}{c|}{Target
    (\ref{Sec:NumericalResults:Eqn:Example3:DiscontinuousTarget})}\\
    \hline
    Level&\#Vertices&$\|y_{\varrho h}-y_d\|$&PCG&$\|y_{\varrho h}-y_d\|$&PCG&$\|y_{\varrho h}-y_d\|$&PCG\\
    \hline
    $1$&$125$&$1.022$e$-1$&$47$&$5.457$e$-2$&$48$&$2.599$e$-1$&$46$\\
    $2$&$729$&$2.540$e$-2$&$135$&$3.780$e$-2$&$141$&$2.034$e$-1$&$138$\\
    $3$&$4,913$&$5.374$e$-3$&$142$&$1.343$e$-2$&$149$&$1.520$e$-1$&$150$\\
    $4$&$35,937$&$1.071$e$-3$&$140$&$4.498$e$-3$&$144$&$1.098$e$-1$&$150$\\
    $5$&$274,625$&$2.066$e$-4$&$137$&$1.622$e$-3$&$139$&$7.857$e$-2$&$149$\\
    $6$&$2,146,689$&$3.998$e$-5$&$134$&$5.760$e$-4$&$136$&$5.588$e$-2$&$150$\\
    \hline
  \end{tabular}
  }
  \caption{PCG for the  mass-lumped SC system ($d=2$):  $L^2$ error and number of 
  mass-lumped SC-PCG iterations for attaining the relative accuracy $10^{-11}$.
  }
  \label{Table:wave_ex1ex2ex3_l2_inexactschur_2d}
\end{table}

\begin{table}[ht]
  {\small
  \begin{tabular}{|cr|cc|cc|cc|}
    \hline
    \multicolumn{2}{|c|}{}&\multicolumn{2}{c|}{Target
      (\ref{Sec:NumericalResults:Eqn:Example1:SmoothTarget})}&\multicolumn{2}{c|}{Target
      (\ref{Sec:NumericalResults:Eqn:Example2:ContinuousTarget})}&\multicolumn{2}{c|}{Target (\ref{Sec:NumericalResults:Eqn:Example3:DiscontinuousTarget})}\\
    \hline
    Level&\#Vertices&$\|y_{\varrho h}-y_d\|$& PCG &$\|y_{\varrho h}-y_d\|$& PCG &$\|y_{\varrho h}-y_d\|$& PCG\\
    \hline
    $1$&$178$&$1.034$e$-1$&$34$&$5.957$e$-2$&$37$&$2.501$e$-1$&$34$\\
    $2$&$235$&$1.028$e$-1$&$35$&$2.880$e$-2$&$35$&$2.593$e$-1$&$35$\\
    $3$&$315$&$9.660$e$-2$&$79$&$2.380$e$-2$&$78$&$1.830$e$-1$&$79$\\
    $4$&$715$&$7.193$e$-2$&$261$&$2.217$e$-2$&$261$&$1.968$e$-1$&$260$\\
    $5$&$1,493$&$5.076$e$-2$&$370$&$2.161$e$-2$&$369$&$1.734$e$-1$&$384$\\
    $6$&$2,185$&$4.359$e$-2$&$423$&$2.047$e$-2$&$422$&$1.767$e$-1$&$418$\\
    $7$&$3,465$&$3.579$e$-2$&$563$&$1.949$e$-2$&$573$&$1.630$e$-1$&$579$\\
    $8$&$9,225$&$2.060$e$-2$&$840$&$1.684$e$-2$&$861$&$1.554$e$-1$&$870$\\
    $9$&$19,057$&$1.261$e$-2$&$628$&$1.372$e$-2$&$640$&$1.390$e$-1$&$658$\\
    $10$&$26,593$&$1.036$e$-2$&$628$&$1.086$e$-2$&$643$&$1.330$e$-1$&$653$\\
    $11$&$47,073$&$7.926$e$-3$&$689$&$9.046$e$-3$&$703$&$1.257$e$-1$&$713$\\
    $12$&$134,113$&$4.420$e$-3$&$1050$&$5.896$e$-3$&$1080$&$1.144$e$-1$&$1121$\\
    $13$&$273,281$&$2.779$e$-3$&$792$&$4.220$e$-3$&$785$&$1.031$e$-1$&$821$\\
    $14$&$372,481$&$2.246$e$-3$&$693$&$3.732$e$-3$&$711$&$9.833$e$-2$&$734$\\
    $15$&$700,161$&$1.722$e$-3$&$724$&$3.212$e$-3$&$734$&$9.240$e$-2$&$771$\\
    $16$&$2,051,841$&$9.570$e$-4$&$1074$&$2.403$e$-3$&$1074$&$8.232$e$-2$&$1146$\\
    $17$&$4,144,513$&$5.951$e$-4$&$808$&$1.685$e$-3$&$775$&$7.490$e$-2$&$843$\\    
    \hline
  \end{tabular}
  }
    \caption{PCG for the  mass-lumped SC system ($d=3$):  $L^2$ error and number of 
  mass-lumped SC-PCG iterations for attaining the relative accuracy $10^{-11}$.
  }
  \label{Table:wave_ex1ex2ex3_l2_inexactschur_3d}
\end{table}

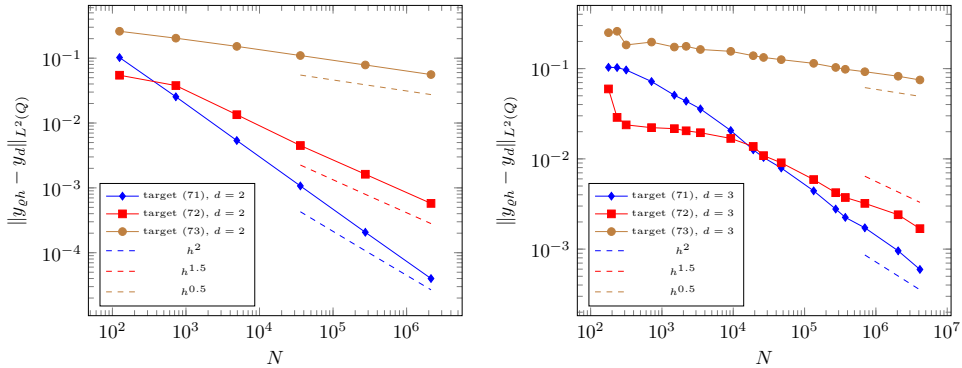
\begin{figure}[htpb!]
  \begin{tikzpicture}[scale=0.725,transform shape]
    \begin{axis}[
        xmode = log,
        ymode = log,
        xlabel=$N$,
        ylabel=$\| y_{\varrho h}- y_d \|_{L^2(Q)}$,
        legend pos=south west,
        legend style={font=\tiny}]
      
      \addplot [solid, mark=diamond*, color=blue] table [col sep=
        &, y=err, x=N]{2d_smooth.dat};
      \addlegendentry{target
        (\ref*{Sec:NumericalResults:Eqn:Example1:SmoothTarget}), $d=2$}

      \addplot  [solid, mark=square*, color=red] table [col sep=
        &, y=err, x=N]{2d_linear.dat};
      \addlegendentry{target
        (\ref*{Sec:NumericalResults:Eqn:Example2:ContinuousTarget}), $d=2$}
      
      \addplot [solid, mark=*, color=brown] table [col sep=
        &, y=err, x=N]{2d_discontinuous.dat};
      \addlegendentry{target
        (\ref*{Sec:NumericalResults:Eqn:Example3:DiscontinuousTarget}), $d=2$}
      
      \addplot [dashed, thin, color=blue] table [col sep=
        &, y=err, x=N]{2d_smooth_exact.dat};
      \addlegendentry{$h^{2}$}
      
      \addplot [dashed, thin, color=red] table [col sep=
        &, y=err, x=N]{2d_continuous_exact.dat};
      \addlegendentry{$h^{1.5}$}
      
      \addplot [dashed, thin, color=brown] table [col sep=
        &, y=err, x=N]{2d_discontinuous_exact.dat};
      \addlegendentry{$h^{0.5}$}
    \end{axis}
    
    \hspace{6.5cm}
    
    \begin{axis}[
        xmode = log,
        ymode = log,
        xlabel=$N$,
        ylabel=$\| y_{\varrho h}- y_d \|_{L^2(Q)}$,
        legend pos=south west,
        legend style={font=\tiny}]
      
      \addplot [solid, mark=diamond*, color=blue] table [col sep=
        &, y=err, x=N]{3d_smooth.dat};
      \addlegendentry{target (\ref*{Sec:NumericalResults:Eqn:Example1:SmoothTarget}), $d=3$}

      \addplot  [solid, mark=square*, color=red] table [col sep=
        &, y=err, x=N]{3d_linear.dat};
      \addlegendentry{target (\ref*{Sec:NumericalResults:Eqn:Example2:ContinuousTarget}), $d=3$}
      
      \addplot [solid, mark=*, color=brown] table [col sep=
        &, y=err, x=N]{3d_discontinuous.dat};
      \addlegendentry{target (\ref*{Sec:NumericalResults:Eqn:Example3:DiscontinuousTarget}), $d=3$}
      
      \addplot [dashed, thin, color=blue] table [col sep=
        &, y=err, x=N]{3d_smooth_exact.dat};
      \addlegendentry{$h^{2}$}
      
      \addplot [dashed, thin, color=red] table [col sep=
        &, y=err, x=N]{3d_continuous_exact.dat};
      \addlegendentry{$h^{1.5}$}
      
      \addplot [dashed, thin, color=brown] table [col sep=
        &, y=err, x=N]{3d_discontinuous_exact.dat};
      \addlegendentry{$h^{0.5}$}
      
    \end{axis}
  \end{tikzpicture}
  \caption{Convergence history for all targets when solving the mass-lumped SC
    system: $d=2$ (left) and $d=3$ (right).}
  \label{fig:convergence_l2_2d_3d}
\end{figure} 

In order to reduce the computational complexity even further, 
we may use the nested SC-PCG iteration with the preconditioner $D_h$ for
solving the mass-lumped Schur complement system 
on a sequence of uniformly or adaptively refined meshes.
We here only consider {\bf Example~3} with the discontinuous target 
\eqref{Sec:NumericalResults:Eqn:Example3:DiscontinuousTarget}.
At the coarsest level, we solve the mass-lumped SC system until the 
initial error is reduced by a factor of $10^6$.
We use the adaptive threshold $\alpha\left[N_{l}/N_{l-1}\right]^{\beta/3}$ 
to control the error at the refined levels  $l=2,3,\ldots$,
where $N_l$ is the number of degrees of freedom at the level $l$. 
In the numerical experiments, we set $\alpha=0.4$ for $d=2$ and
$\alpha=0.1$ for $d=3$, 
and use $\beta=0.5$ and $\beta=0.75$ for the uniform and adaptive refinement, 
respectively.
The performance of this nested mass-lumped SC-PCG iteration is documented 
in Tables~\ref{Table:wave_ex3_l2_2d_lumpedmass_nested} and
\ref{Table:wave_ex3_l2_3d_lumpedmass_nested}
for $d=2$ and $d=3$, respectively.
The adaptive refinement shows a much better convergence than the uniform one. 

In fact, it is straightforward to parallelize the mass-lumped SC-PCG solver for
this mass-lumped SC system; see the measured performance using $256$ cores
for {\bf Example~3} ($d=2$) in Table
\ref{Table:wave_ex3_l2_2d_parallel_lumpedmass_nested}. 
The parallel
solver is implemented using the open source MFEM (https://mfem.org/), and
tested on the high performance cluster RADON1
(https://www.oeaw.ac.at/ricam/hpc). We observe a very good
parallel efficiency.

\begin{table}[ht]
  {\small
  \begin{tabular}{|r|cc|c|r|c|c|c|}
    \hline
    \multicolumn{4}{|c|}{Uniform}&\multicolumn{3}{c|}{Adaptive}\\
    \hline
    \#Vertices &$\|\tilde{y}_{\varrho h}-y_d\|$&EOC&SC-PCG&\#Vertices &$\|\tilde{y}_{\varrho h}-y_d\|$&SC-PCG\\
    \hline
    $125$&$2.599$e$-1$&$-$&$37$ [0.002 s] &$125$&$2.599$e$-1$&$37$ [0.002 s]\\
    $729$&$2.105$e$-1$&$0.30$&$5$ [0.002 s] &$223$&$2.717$e$-1$&$1$ [0.0001 s]\\
    $4,913$&$1.502$e$-1$&$0.49$&$7$ [0.02 s] &$1,072$&$1.786$e$-1$&$12$ [0.006 s]\\
    $35,937$&$1.091$e$-1$&$0.46$&$7$ [0.15 s] &$4,750$&$1.260$e$-1$&$9$ [0.02 s]\\
    $274,625$&$7.808$e$-2$&$0.48$&$8$ [1.74 s]&$18,267$&$9.518$e$-2$&$12$ [0.12 s]\\
    $2,146,689$&$5.555$e$-2$ &$0.49$&$8$ [13.20 s]&$28,533$&$8.631$e$-2$&$12$ [0.23 s]\\
    $16,974,593$&$3.940$e$-2$ &$0.50$&$8$ [102.90 s]&$86,893$&$6.466$e$-2$&$12$ [0.77 s]\\
    &&& &$106,903$&$6.144$e$-2$&$12$ [1.15 s]\\
    &&& &$362,570$&$4.538$e$-2$&$12$ [3.72 s]\\
    &&& &$404,330$&$4.397$e$-2$&$11$ [4.40 s]\\
    &&& &$1,507,002$&$3.195$e$-2$&$12$ [18.44 s]\\
    \hline
  \end{tabular}
  }
    \caption{{\bf Example~3} 
    (Discontinuous Target \eqref{Sec:NumericalResults:Eqn:Example3:DiscontinuousTarget}, $d=2$, $L^2$ regularization, 
    mass lumping, nested iteration):
    Convergence in the  $L^2(Q)$-norm, number of nested SC-PCG iterations,
    and time in seconds.}
  \label{Table:wave_ex3_l2_2d_lumpedmass_nested}
\end{table}

\begin{table}[ht]
  {\small
  \begin{tabular}{|r|cc|c|r|c|c|c|}
    \hline
    \multicolumn{4}{|c|}{Uniform}&\multicolumn{3}{c|}{Adaptive}\\
    \hline
    \#Vertices &$\|\tilde{y}_{\varrho h}-y_d\|$&EOC&SC-PCG&\#Vertices &$\|\tilde{y}_{\varrho h}-y_d\|$&SC-PCG\\
    \hline
    $4,913$&$1.520$e$-1$&$-$&$88$ [0.02 s] &$4,913$&$1.520$e$-1$&$88$ [0.02 s]\\
    $35,937$&$1.083$e$-1$&$0.49$&$8$ [0.004 s] &$7,848$&$1.312$e$-1$&$10$ [0.003 s]\\
    $274,625$&$7.744$e$-2$&$0.48$&$8$ [0.005 s]&$23,967$&$8.892$e$-2$&$10$ [0.004 s]\\
    $2,146,689$&$5.527$e$-2$&$0.49$&$8$ [0.024 s]&$44,470$&$7.413$e$-2$&$10$ [0.006 s]\\
    $16,974,593$&$3.931$e$-2$&$0.49$&$8$ [0.18 s]&$84,302$&$6.290$e$-2$&$9$ [0.006 s]\\
    $135,005,697$&$2.789$e$-2$&$0.49$&$8$ [1.31 s]&$189,462$&$5.034$e$-2$&$9$ [0.007 s]\\
    &&& &$552,590$&$3.709$e$-2$&$11$ [0.01 s]\\
    &&& &$747,512$&$3.510$e$-2$&$10$ [0.01 s]\\
    &&& &$1,586,023$&$2.723$e$-2$&$13$ [0.04 s]\\
    \hline
  \end{tabular}
  }
  \caption{{\bf Example~3} 
    (Discontinuous Target \eqref{Sec:NumericalResults:Eqn:Example3:DiscontinuousTarget}, $d=2$, $L^2$ regularization, 
    mass lumping, parallel nested iteration):
    Convergence in the $L^2(Q)$-norm, number of nested SC-PCG iterations,
    and time in seconds, using 256 cores.}
  \label{Table:wave_ex3_l2_2d_parallel_lumpedmass_nested}
\end{table}

\begin{table}[ht]
  {\small
  \begin{tabular}{|r|c|c|r|c|c|c|}
    \hline
    \multicolumn{3}{|c|}{Uniform}&\multicolumn{3}{c|}{Adaptive}\\
    \hline
    \#Vertices &$\|\tilde{y}_{\varrho h}-y_d\|$&SC-PCG&\#Vertices &$\|\tilde{y}_{\varrho h}-y_d\|$&SC-PCG\\
    \hline
    $178$&$2.501$e$-1$&$31$ [0.003 s] &$178$&$2.501$e$-1$&$31$ [0.003 s]\\
    $715$&$2.001$e$-1$&$24$ [0.01 s] &$296$&$2.006$e$-1$&$12$ [0.002 s]\\
    $2,185$&$1.802$e$-1$&$18$ [0.027 s] &$569$&$1.880$e$-1$&$7$ [0.003 s]\\
    $9,225$&$1.548$e$-1$&$43$ [0.26 s] &$1,316$&$1.620$e$-1$&$4$ [0.003 s]\\
    $19,057$&$1.380$e$-1$&$20$ [0.31 s] &$2,167$&$1.475$e$-1$&$21$ [0.03 s]\\
    $47,073$&$1.262$e$-1$&$42$ [4.45 s] &$6,479$&$1.274$e$-1$&$16$ [0.08 s]\\
    $273,281$&$1.023$e$-1$&$25$ [10.62 s] &$18,895$&$1.127$e$-1$&$21$ [0.32 s]\\
    $700,161$&$9.261$e$-2$&$45$ [82.68 s]&$48,705$&$9.441$e$-2$&$24$ [1.32 s]\\
    $2,051,841$&$8.217$e$-2$&$54$ [194.07 s]&$77,141$&$8.833$e$-2$&$23$ [2.07 s]\\
    $5,585,665$&$7.132$e$-2$&$33$ [496.00 s]&$245,196$&$7.890$e$-2$&$14$ [4.00 s]\\
    $10,828,545$&$6.642$e$-2$&$40$ [808.90 s]&$378,810$&$6.860$e$-2$&$24$ [11.53 s]\\
    $32,127,745$&$5.920$e$-2$&$54$ [3471.20 s]&$603,678$&$6.351$e$-2$&$28$ [42.35 s]\\
    $$&$$$$&$$ &$762,073$&$6.177$e$-2$&$32$ [54.94 s]\\
    $$&$$$$&$$ &$1,343,769$&$5.786$e$-2$&$41$ [98.94 s]\\
    \hline
  \end{tabular}
  }
      \caption{{\bf Example~3} 
    (Discontinuous Target \eqref{Sec:NumericalResults:Eqn:Example3:DiscontinuousTarget}, $d=3$, $L^2$ regularization, 
    mass lumping, nested iteration):
    Convergence in the  $L^2(Q)$-norm, number of nested SC-PCG iterations,
    and time in seconds.}
  \label{Table:wave_ex3_l2_3d_lumpedmass_nested}
\end{table}

\subsection{Energy Regularization}
\label{SubSec:EnergyRegularization}
In the BP-PCG, we use \eqref{Sec:Solvers:Eqn:ScaledAMGpreconditioner} as
$\widehat{A}_{\varrho h}$ with $\delta=0.25$, and $D_h = \mbox{lump}(M_h)$ as
$\widehat{S}_{\varrho h}$. Furthermore, for the application of
$A_{\varrho h}^{-1}$ to a given vector $\mathbf{v}$ inside each 
SC-PCG/CG iteration, 
the Ruge--St\"{u}ben AMG method \cite{LLSY:RugeStuben}  has been used to
solve $A_{\varrho h}\mathbf{w}=\mathbf{v}$ until the relative preconditioned
residual error is reduced by a factor of $10^{12}$.

In Tables~\ref{Numa:wave_ex1_energy_2d}-\ref{Table:wave_ex3_energy_3d}, 
we provide the convergence studies of the space-time finite element
approximations to the targets 
\eqref{Sec:NumericalResults:Eqn:Example1:SmoothTarget},
\eqref{Sec:NumericalResults:Eqn:Example2:ContinuousTarget},
and 
\eqref{Sec:NumericalResults:Eqn:Example3:DiscontinuousTarget},
which correspond to {\bf Examples~1}, {\bf 2} and {\bf 3},
in the $L^2$-norm, and the corresponding number of iterations for the
preconditioned GMRES, SC-PCG/CG and PB-PCG solvers
for two as well as three space dimensions.
We observe the expected convergence rate for Examples~2 and 3, 
cf. Tables~\ref{Table:wave_ex2_energy_2d}-\ref{Table:wave_ex3_energy_3d},
whereas the convergence rates for the smooth target from  Example~1  
is reduced for both $d=2$ and $d=3$;
see Tables~\ref{Numa:wave_ex1_energy_2d} and
\ref{Numa:wave_ex1_energy_3d}, respectively.
This phenomena has been explained in Remark \ref{rem:lower-order-convergence-energy-regularization}; see also \cite{LLSY:LoescherSteinbach:2024SINUM}.
Figure~\ref{fig:convergence_energy_2d_3d} illustrates 
the corresponding convergence history.
The robustness of the proposed preconditioners are well confirmed by almost
mesh-independent numbers of iterations for all solvers in all cases.

\begin{table}[ht]
  {\small
  \begin{tabular}{|cr|cc|ccc|}
    \hline
    \multicolumn{2}{|c|}{}&\multicolumn{2}{c|}{Convergence}&\multicolumn{3}{c|}{Solvers (Number of Iterations)}\\
    \hline
    Level&\#Vertices&$\|y_{\varrho h}-y_d\|$&EOC&GMRES& SC-PCG/CG & PB-PCG\\
    \hline
    $1$&$125$&$8.082$e$-2$&$-$ & $59$ & $36/37$&$63$  \\
    $2$&$729$&$3.749$e$-2$&$1.11$&$96$ &$52/67$&$103$\\
    $3$&$4,913$&$1.550$e$-2$&$1.27$ &$99$ &$53/73$&$109$\\
    $4$&$35,937$&$5.994$e$-3$&$1.37$ &$97$&$52/76$&$113$\\
    $5$&$274,625$&$2.272$e$-3$&$1.40$ &$94$&$51/79$&$118$\\
    $6$&$2,146,689$&$8.273$e$-4$ &$1.46$ &$91$&$50/80$&$118$\\
    \hline
  \end{tabular}
  }
  \caption{{\bf Example~1} 
  (Smooth Target \ref{Sec:NumericalResults:Eqn:Example1:SmoothTarget}, $d=2$, energy regularization): 
  Convergence in the $L^2(Q)$-norm, and number of iterations  
  for attaining the relative accuracy $10^{-11}$.}        
  \label{Numa:wave_ex1_energy_2d}
\end{table}

\begin{table}[ht]
  {\small
  \begin{tabular}{|cr|cc|ccc|}
    \hline
    \multicolumn{2}{|c|}{}&\multicolumn{2}{c|}{Convergence}&\multicolumn{3}{c|}{Solvers (Number of Iterations) }\\
    \hline
    Level&\#Vertices&$\|y_{\varrho h}-y_d\|$&EOC&GMRES&SC-PCG/CG& PB-PCG\\
    \hline
    $1$&$178$&$9.152$e$-2$&$-$&$40$&$23/24$&$51$\\
    $2$&$235$&$8.818$e$-2$&$0.52$&$43$&$26/26$&$59$\\
    $3$&$315$&$8.016$e$-2$&$1.30$&$94$&$52/43$&$91$\\
    $4$&$715$&$6.117$e$-2$&$1.32$&$139$&$78/97$&$155$\\
    $5$&$1,493$&$4.843$e$-2$&$1.29$&$143$&$74/128$&$167$\\
    $6$&$2,185$&$4.373$e$-2$&$1.04$&$151$&$80/141$&$179$\\
    $7$&$3,465$&$3.801$e$-2$&$1.21$&$206$&$106/125$&$238$\\
    $8$&$9,225$&$2.743$e$-2$&$1.34$&$210$&$110/158$&$261$\\
    $9$&$19,057$&$2.113$e$-2$&$1.44$&$190$&$99/203$&$239$\\
    $10$&$26,593$&$1.893$e$-2$&$1.32$&$183$&$96/209$&$240$\\
    $11$&$47,073$&$1.570$e$-2$&$1.31$&$227$&$122/177$&$308$\\
    $12$&$134,113$&$1.097$e$-2$&$1.37$&$226$&$118/194$&$312$\\
    $13$&$273,281$&$8.462$e$-3$&$1.44$&$200$&$104/247$&$264$\\
    $14$&$372,481$&$7.564$e$-3$&$1.48$&$194$&$100/243$&$263$\\
    $15$&$700,161$&$6.146$e$-3$&$1.32$&$229$&$121/206$&$318$\\
    $16$&$2,051,841$&$4.145$e$-3$&$1.46$&$225$&$118/219$&$316$\\
    $17$&$4,144,513$&$3.201$e$-3$&$1.49$&$199$&$103/272$&$275$\\
    \hline
  \end{tabular}
  }
  \caption{{\bf Example~1} 
  (Smooth Target \ref{Sec:NumericalResults:Eqn:Example1:SmoothTarget}, $d=3$, energy regularization): 
  Convergence in the $L^2(Q)$-norm, and number of 
  iterations for attaining the relative accuracy $10^{-11}$.}        
  \label{Numa:wave_ex1_energy_3d}
\end{table}

\begin{table}[ht]
  {\small
  \begin{tabular}{|cr|cc|ccc|}
    \hline
    \multicolumn{2}{|c|}{}&\multicolumn{2}{c|}{Convergence}&\multicolumn{3}{c|}{Solvers (Number of Iterations)}\\
    \hline
    Level&\#Vertices&$\|y_{\varrho h}-y_d\|$&EOC&PGMRES&SC-PCG/CG&PB-PCG\\
    \hline
    $1$&$125$&$4.817$e$-2$&$-$ & $61$ & $36/38$&$57$ \\
    $2$&$729$&$3.146$e$-2$&$0.61$&$101$ &$54/69$&$94$\\
    $3$&$4,913$&$1.541$e$-2$&$1.03$&$103$&$55/73$&$96$\\
    $4$&$35,937$&$6.295$e$-3$&$1.29$&$100$&$53/73$&$98$\\
    $5$&$274,625$&$2.445$e$-3$&$1.36$ &$97$&$52/73$&$99$\\
    $6$&$2,146,689$&$9.191$e$-4$&$1.41$ &$94$&$51/72$&$99$\\
    \hline
  \end{tabular}
  }
  \caption{{\bf Example~2} 
  (Continuous Target \eqref{Sec:NumericalResults:Eqn:Example2:ContinuousTarget}, $d=2$, energy regularization): 
  Convergence in the $L^2(Q)$-norm, and number of iterations  
  for attaining the relative accuracy $10^{-11}$.} 
  \label{Table:wave_ex2_energy_2d}
\end{table}
\begin{table}[ht]
  {\small
  \begin{tabular}{|cr|cc|ccc|}
    \hline
    \multicolumn{2}{|c|}{}&\multicolumn{2}{c|}{Convergence}&\multicolumn{3}{c|}{Solvers (Number of Iterations)}\\
    \hline
    Level&\#Vertices&$\|y_{\varrho h}-y_d\|$&EOC&PGMRES&SC-PCG/CG&PB-PCG\\
    \hline
    $1$&$178$&$5.938$e$-2$&$-$&$40$&$23/25$&$54$\\
    $2$&$235$&$2.587$e$-2$&$11.56$&$44$&$26/26$&$55$\\
    $3$&$315$&$2.073$e$-2$&$3.03$&$97$&$51/42$&$96$\\
    $4$&$715$&$1.987$e$-2$&$0.21$&$146$&$80/90$&$169$\\
    $5$&$1,493$&$1.971$e$-2$&$0.04$&$148$&$76/124$&$184$\\
    $6$&$2,185$&$1.855$e$-2$&$0.62$&$155$&$81/133$&$196$\\
    $7$&$3,465$&$1.772$e$-2$&$0.39$&$211$&$107/120$&$261$\\
    $8$&$9,225$&$1.552$e$-2$&$0.55$&$219$&$113/151$&$290$\\
    $9$&$19,057$&$1.261$e$-2$&$1.15$&$195$&$100/182$&$263$\\
    $10$&$26,593$&$1.039$e$-2$&$2.33$&$185$&$97/188$&$267$\\
    $11$&$47,073$&$9.086$e$-3$&$0.94$&$232$&$122/158$&$339$\\
    $12$&$134,113$&$6.737$e$-3$&$1.15$&$232$&$121/173$&$352$\\
    $13$&$273,281$&$5.345$e$-3$&$1.29$&$203$&$106/206$&$295$\\
    $14$&$372,481$&$4.909$e$-3$&$1.12$&$193$&$101/207$&$304$\\
    $15$&$700,161$&$4.158$e$-3$&$1.07$&$233$&$121/174$&$364$\\
    $16$&$2,051,841$&$2.934$e$-3$&$1.28$&$227$&$118/190$&$369$\\
    $17$&$4,144,513$&$2.185$e$-3$&$1.70$&$198$&$103/215$&$307$\\
    \hline
  \end{tabular}
  }
  \caption{{\bf Example~2} 
  (Continuous Target \eqref{Sec:NumericalResults:Eqn:Example2:ContinuousTarget}, $d=3$, energy regularization): 
    Convergence in the $L^2(Q)$-norm, and number of iterations
    for attaining the relative accuracy $10^{-11}$.} 
  \label{Table:wave_ex2_energy_3d}
\end{table} 

\begin{table}[ht]
  {\small
  \begin{tabular}{|cr|cc|ccc|}
    \hline
    \multicolumn{2}{|c|}{}&\multicolumn{2}{c|}{Convergence}&\multicolumn{3}{c|}{Solvers (Number of Iterations)}\\
    \hline
    Level&\#Vertices&$\|y_{\varrho h}-y_d\|$&EOC&PGMRES&SC-PCG/CG & PB-PCG\\
    \hline
    $1$&$125$&$2.502$e$-1$&$-$ & $60$ & $36/38$&$62$ \\
    $2$&$729$&$1.944$e$-1$&$0.36$&$100$ &$53/68$&$93$\\
    $3$&$4,913$&$1.485$e$-1$&$0.39$ &$104$ &$55/74$&$97$\\
    $4$&$35,937$&$1.093$e$-1$&$0.44$ &$103$&$55/74$&$99$\\
    $5$&$274,625$&$7.895$e$-2$&$0.47$ &$102$&$55/74$&$103$\\
    $6$&$2,146,689$&$5.648$e$-2$ &$0.48$ &$103$&$55/74$&$105$\\
    \hline
  \end{tabular}
  }
    \caption{{\bf Example~3} 
  (Discontinuous Target \eqref{Sec:NumericalResults:Eqn:Example3:DiscontinuousTarget}, $d=2$, energy regularization): 
  Convergence in the $L^2(Q)$-norm, and number of iterations  
  for attaining the relative accuracy $10^{-11}$.} 
  \label{Table:wave_ex3_energy_2d}
\end{table}

\begin{table}[ht]
  {\small
  \begin{tabular}{|cr|cc|ccc|}
    \hline
    \multicolumn{2}{|c|}{}&\multicolumn{2}{c|}{Convergence}&\multicolumn{3}{c|}{Solvers (Number of Iterations)}\\
    \hline
    Level&\#Vertices&$\|y_{\varrho h}-y_d\|$& EOC &PGMRES&SC-PCG/CG & PB-PCG\\
    \hline
    $1$&$178$&$2.337$e$-1$&$-$&$40$ & $23/24$&$51$\\
    $2$&$235$&$2.302$e$-1$&$0.21$&$44$ & $26/26$&$58$\\
    $3$&$315$&$1.672$e$-1$&$4.37$&$90$ & $51/43$&$86$\\
    $4$&$715$&$1.864$e$-1$&$-0.53$&$146$ & $78/90$&$164$ \\
    $5$&$1,493$&$1.651$e$-1$&$0.67$&$146$ & $77/128$&$169$  \\
    $6$&$2,185$&$1.701$e$-1$&$-0.31$&$150$ & $79/135$&$171$  \\
    $7$&$3,465$&$1.550$e$-1$&$0.80$&$206$ & $107/122$&$237$  \\
    $8$&$9,225$&$1.483$e$-1$&$0.18$&$220$ & $112/152$&$265$  \\
    $9$&$19,057$&$1.340$e$-1$&$0.56$&$195$ & $100/188$&$247$  \\
    $10$&$26,593$&$1.280$e$-1$&$0.55$&$187$ & $98/192$&$244$  \\
    $11$&$47,073$&$1.214$e$-1$&$0.37$&$234$ & $123/162$&$310$  \\
    $12$&$134,113$&$1.103$e$-1$&$0.37$&$235$ & $122/178$&$321$  \\
    $13$&$273,281$&$1.006$e$-1$&$0.51$&$207$ & $108/216$&$277$  \\
    $14$&$372,481$&$9.606$e$-2$&$0.60$&$199$ & $103/214$&$270$  \\
    $15$&$700,161$&$9.045$e$-2$&$0.38$&$236$ & $124/181$&$326$  \\
    $16$&$2,051,841$&$8.059$e$-2$&$0.43$&$232$ & $121/194$&$334$  \\
    $17$&$4,144,513$&$7.373$e$-2$&$0.51$&$207$ & $109/224$&$290$  \\
    \hline
  \end{tabular}
  }
  \caption{{\bf Example~3} 
  (Discontinuous Target \eqref{Sec:NumericalResults:Eqn:Example3:DiscontinuousTarget}, $d=3$, energy regularization): 
  Convergence in the $L^2(Q)$-norm, and number of iterations  
  for attaining the relative accuracy $10^{-11}$.} 
  \label{Table:wave_ex3_energy_3d}
\end{table}

\begin{figure}[htpb!]
  \begin{tikzpicture}[scale=0.725,transform shape]
    
    \begin{axis}[
        xmode = log,
        ymode = log,
        xlabel=$N$,
        ylabel=$\| y_{\varrho h}- y_d \|_{L^2(Q)}$,
        legend pos=south west,
        legend style={font=\tiny}]
      
      \addplot [solid, mark=diamond*, color=blue] table [col sep=
        &, y=err, x=N]{2d_smooth_energy.dat};
      \addlegendentry{target
        (\ref*{Sec:NumericalResults:Eqn:Example1:SmoothTarget}), $d=2$}

      \addplot  [solid, mark=square*, color=red] table [col sep=
        &, y=err, x=N]{2d_linear_energy.dat};
      \addlegendentry{target
        (\ref*{Sec:NumericalResults:Eqn:Example2:ContinuousTarget}), $d=2$}
      
      \addplot [solid, mark=*, color=brown] table [col sep=
        &, y=err, x=N]{2d_discontinuous_energy.dat};
      \addlegendentry{target
        (\ref*{Sec:NumericalResults:Eqn:Example3:DiscontinuousTarget}), $d=2$}
      
      \addplot [dashed, thin, color=blue] table [col sep=
        &, y=err, x=N]{2d_smooth_exact_energy.dat};
      \addlegendentry{$h^{1.5}$}
      
      
      \addplot [dashed, thin, color=brown] table [col sep=
        &, y=err, x=N]{2d_discontinuous_exact_energy.dat};
      \addlegendentry{$h^{0.5}$}
    \end{axis}
    
    \hspace{6.5cm}
    
    \begin{axis}[
        xmode = log,
        ymode = log,
        xlabel=$N$,
        ylabel=$\| y_{\varrho h}- y_d \|_{L^2(Q)}$,
        legend pos=south west,
        legend style={font=\tiny}]
      
      \addplot [solid, mark=diamond*, color=blue] table [col sep=
        &, y=err, x=N]{energy3d_smooth.dat};
      \addlegendentry{target (\ref*{Sec:NumericalResults:Eqn:Example1:SmoothTarget}), $d=3$}

      \addplot  [solid, mark=square*, color=red] table [col sep=
        &, y=err, x=N]{energy3d_linear.dat};
      \addlegendentry{target (\ref*{Sec:NumericalResults:Eqn:Example2:ContinuousTarget}), $d=3$}
      
      \addplot [solid, mark=*, color=brown] table [col sep=
        &, y=err, x=N]{energy3d_discontinuous.dat};
      \addlegendentry{target (\ref*{Sec:NumericalResults:Eqn:Example3:DiscontinuousTarget}), $d=3$}
      
      \addplot [dashed, thin, color=blue] table [col sep=
        &, y=err, x=N]{energy3d_smooth_exact.dat};
      \addlegendentry{$h^{1.5}$}
      
      \addplot [dashed, thin, color=red] table [col sep=
        &, y=err, x=N]{energy3d_continuous_exact.dat};
      \addlegendentry{$h^{1.5}$}
      
      \addplot [dashed, thin, color=brown] table [col sep=
        &, y=err, x=N]{energy3d_discontinuous_exact.dat};
      \addlegendentry{$h^{0.5}$}
      
    \end{axis}
  \end{tikzpicture}
  \caption{Convergence history for all the targets
    (\ref{Sec:NumericalResults:Eqn:Example1:SmoothTarget})-(\ref{Sec:NumericalResults:Eqn:Example3:DiscontinuousTarget}) 
    and for energy regularization: $d=2$ (left) and $d=3$ (right).}
  \label{fig:convergence_energy_2d_3d}
\end{figure}
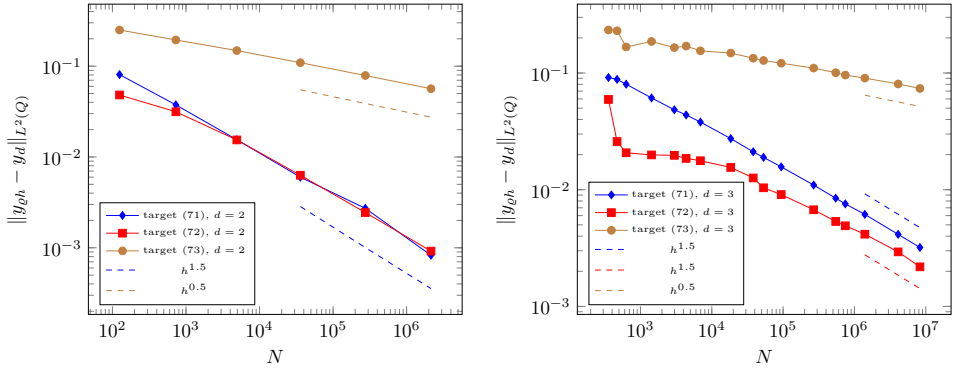 

As in the preceding Subsection~\ref{SubSec:L2Regularization} for the
mass-lumped Schur complement system, we may use the nested iteration
procedure on a sequence of uniformly and adaptively refined meshes 
to solve the inexact Schur complement system
\eqref{Sec:Solvers:Eqn:InexactSy=yd}. We again use the most interesting,
discontinuous target
\eqref{Sec:NumericalResults:Eqn:Example3:DiscontinuousTarget}
for our numerical test. To control the nested iteration error, we have
used the adaptive threshold 
$\alpha\left[N_{l}/N_{l-1}\right]^{\beta/3}$ for $l=2,3,\ldots,$,
with  $\alpha=0.5$ and $\alpha=0.1$ for $d=2$ and $d=3$, respectively, 
and $\beta=0.5$ and $\beta=0.75$
for the uniform and adaptive refinement, respectively.
For the implementation of the operation $A_{\varrho h}^{-1}$ within each
SC-PCG iteration, several inner AMG iterations are applied, namely $3$. 
The $L^2$ convergence, the number of SC-PCG iterations, and the
computational time in seconds 
are provided in Table~\ref{Table:wave_ex3_energy_2d_nested} for $d=2$.
From this table, we observe more efficiency without loss of accuracy 
compared with the results in Tables~\ref{Table:wave_ex3_energy_2d} 
obtained for the non-nested (single-grid) iterations.  

\begin{table}[ht]
  {\small
  \begin{tabular}{|r|cc|c|r|c|c|c|}
    \hline
    \multicolumn{4}{|c|}{Uniform}&\multicolumn{3}{c|}{Adaptive}\\
    \hline
    \#Vertices &$\|\tilde{y}_{\varrho h}-y_d\|$&EOC&SC-PCG&\#Vertices &$\|\tilde{y}_{\varrho h}-y_d\|$&SC-PCG\\
    \hline
    $125$&$2.502$e$-1$&$-$&$28$ [0.01 s] &$125$&$2.502$e$-1$&$28$ [0.01 s]\\
    $729$&$1.996$e$-1$&$0.33$&$2$ [0.005 s] &$223$&$2.513$e$-1$&$1$ [0.0007 s]\\
    $4,913$&$1.492$e$-1$&$0.42$&$2$ [0.05 s] &$1,044$&$1.668$e$-1$&$3$ [0.02 s]\\
    $35,937$&$1.096$e$-1$&$0.44$&$2$ [0.38 s] &$4,616$&$1.101$e$-1$&$3$ [0.12 s]\\
    $274,625$&$7.902$e$-2$&$0.47$&$2$ [4.09 s]&$17,934$&$8.560$e$-2$&$5$ [0.87 s]\\
    $2,146,689$&$5.646$e$-2$ &$0.49$&$2$ [34.66 s]&$24,487$&$8.309$e$-2$&$3$ [0.54 s]\\
    $16,974,593$&$4.013$e$-2$ &$0.49$&$2$ [281.89 s]&$82,560$&$6.274$e$-2$&$3$ [2.12 s]\\
    &&& &$94,025$&$6.051$e$-2$&$3$ [2.70 s]\\
    &&& &$349,864$&$4.400$e$-2$&$3$ [11.96 s]\\
    \hline
  \end{tabular}
  }
    \caption{{\bf Example~3} 
    (Discontinuous Target \eqref{Sec:NumericalResults:Eqn:Example3:DiscontinuousTarget}, $d=2$, energy regularization, 
      nested iteration):
    Convergence in the  $L^2(Q)$-norm, number of nested SC-PCG iterations,
    and time in seconds.}
  \label{Table:wave_ex3_energy_2d_nested}
\end{table}

\section{Conclusion and outlook}
\label{Sec:ConclusionAndOutlook}
We have considered tracking-type, distributed OCPs with both the standard 
$L^2$ regularization and the more general energy regularization subject to
hyperbolic state equations without additional control and{/}or state
constraints. The regularization parameter $\varrho$ is related to the mesh
size $h$ in such way that the deviation of the computed FE state $y_h$ from
the desired state $y_d$ is of asymptotically optimal order wrt the $L^2$ norm
in dependence on the smoothness of $y_d$. In particular, the case of
discontinuous targets,  that is the most interesting case 
from a practical point of view, is covered by the analysis. 
The predicted convergence rate $h^{1/2 - \varepsilon}$ is observed in 
all our numerical experiments. This rate can easily be improved by a
simple space-time FE adaptivity based on the computable and localizable
error $\|y_d - y_h\|_{L^2(Q)}$ and a variable choice of $\varrho$ adapted
to the local mesh size accordingly. In all cases, the primal Schur
complement $S_{\varrho h}$ is spectrally equivalent to the mass matrix $M_h$,
and, therefore, to some diagonal approximation to mass matrix like the
lumped mass matrix $D_h = \mbox{lump}(M_h)$. This is the basis for the
construction of fast iterative solvers like the PB-PCG and the PCG for the
SID system \eqref{Sec:Introduction:Eqn:AbstractDiscreteReducedOptimalitySystem} 
and the SPD SC system \eqref{Sec:Introduction:Eqn:SchurComplementSystem},
respectively. In order to ensure a fast multiplication of SC $S_{\varrho h}$
with some vector in the latter case, we can replace  $A_{\varrho}^{-1}$ by
$(\mbox{lump}(\overline{M}_{\varrho h}))^{-1}$ for the $L^2$ regularization,
whereas inner multigrid iterations for the approximate inversion of the
algebraic space-time Laplacian must be used in the case of energy
regularization. We can control the number of inner iterations 
in such a way that the discretization is not disturbed. 

In practice, these solvers should always be used in a nested iteration
framework on a sequence of uniformly or adaptively refined meshes producing
state approximations $y_l^{k_l}$, that differ from the desired state $y_d$
wrt to the $L^2(Q)$-norm in the order of the discretization error,
in asymptotically  optimal or, at least, almost optimal complexity
as one can observe from
Tables~\ref{Table:wave_ex3_l2_2d_lumpedmass_nested},  
\ref{Table:wave_ex3_l2_2d_parallel_lumpedmass_nested},
\ref{Table:wave_ex3_l2_3d_lumpedmass_nested},
and \ref{Table:wave_ex3_energy_2d_nested}. So, the nested iteration process
can be stopped as soon as some given (relative) accuracy of the nested
iteration approximation $y_l^{k_l}$ of the desired state $y_d$ is reached or 
the cost of the control measured in terms of $\| u_l^{k_l}\|_U$ exceeds
some given threshold, where $u_l^{k_l}$ is the discrete control recovered
from the last nested state iterate $y_l^{k_l} \in Y_l = Y_{h_l}$. We refer
the reader to  \cite{LLSY:LangerLoescherSteinbachYang:2024CAMWA}
for a more detailed 
description of this nested iteration procedure in the case of elliptic OCPs.

It is possible to generalize these results to other hyperbolic state
equation like dynamic elasticity initial-boundary value problems. Control
and{/}or state constraints can be considered in the same way as was done in
\cite{LLSY:GanglLoescherSteinbach:2023} for elliptic 
state equations. The corresponding non-linear algebraic system can be solved 
by semi-smooth Newton methods \cite{LLSY:HintermuellerItoKunisch2003SIOPT}.
The linear system arising at each step of the semi-smooth Newton iteration 
has the same structure as the linear systems studied in this paper.
 
\section*{Acknowledgments}
We would like to thank the computing resource support of the supercomputer
MACH--2\footnote{https://www3.risc.jku.at/projects/mach2/} from Johannes
Kepler Universit\"{a}t Linz and of the high performance computing cluster
Radon1\footnote{https://www.oeaw.ac.at/ricam/hpc} from Johann Radon Institute
for Computational and Applied Mathematics (RICAM).
Further, the financial support for the fourth author by the
Austrian Federal Ministry for Digital and Economic Affairs, the National
Foundation for Research, Technology and Development and the Christian
Doppler Research Association is gratefully acknowledged.


\bibliography{LLSY2024CAMWA}

\newcommand{\noopsort}[1]{} \newcommand{\printfirst}[2]{#1}
  \newcommand{\singleletter}[1]{#1} \newcommand{\switchargs}[2]{#2#1}
\begin{thebibliography}{10}

\bibitem{LLSY:AxelssonKaratson:2020NumerMath}
O.~Axelsson and J.~Kar\'{a}tson.
\newblock Superior properties of the {PRESB} preconditioner for operators on
  two-by-two block form with square blocks.
\newblock {\em Numer. Math}, 146(2):335–368, 2020.

\bibitem{LLSY:AxelssonNeytchevaStroem:2018JNM}
O.~Axelsson, M.~Neytcheva, and A.~Str\"om.
\newblock An efficient preconditioning method for state box-constrained optimal
  control problems.
\newblock {\em J. Numer. Math.}, 26(4):185--207, 2018.

\bibitem{LLSY:BabuskaVogelius:1984NumerMath}
I.~Babu\v{s}ka and M.~Vogelius.
\newblock Feedback and adaptive finite element solution of one-dimensional
  boundary value problems.
\newblock {\em Numer. Math.}, 44:75--102, 1984.

\bibitem{LLSY:BaiBenziChenWang:2013IMANA}
Z.-Z. Bai, M.~Benzi, F.~Chen, and Z.-Q. Wang.
\newblock Preconditioned {MHSS} iteration methods for a class of block
  two-by-two linear systems with applications to distributed control problems.
\newblock {\em IMA J. Numer. Anal.}, 33(1):343--369, 2013.

\bibitem{LLSY:BaiPan:2021Book}
Z.-Z. Bai and J.-Y. Pan.
\newblock {\em Matrix Analysis and Computations}.
\newblock SIAM, 2021.

\bibitem{LLSY:BenziGolubLiesen:2005ActaNumerica}
M.~Benzi, G.~H. Golub, and J.~Liesen.
\newblock Numerical solution of saddle point problems.
\newblock {\em Acta Numer.}, 14:1--137, 2005.

\bibitem{LLSY:JB95}
J.~Bey.
\newblock Tetrahedral grid refinement.
\newblock {\em Computing}, 55:355--378, 1995.

\bibitem{LLSY:BramblePasciak:1988a}
J.~H. Bramble and J.~E. Pasciak.
\newblock A preconditioning technique for indefinite systems resulting from
  mixed approximations of elliptic problems.
\newblock {\em Math. Comp.}, 50(181):1--17, 1988.

\bibitem{LLSY:Dauge:1988}
M.~Dauge.
\newblock {\em Elliptic Boundary Value Problems on Corner Domains}, volume 1341
  of {\em Lecture Notes in Mathematics}.
\newblock Springer, Berlin, Heidelberg, 1988.

\bibitem{LLSY:DeLosReyes:2015Book}
J.~{De los Reyes}.
\newblock {\em Numerical PDE-Constrained Optimization}.
\newblock Springer Cham, 2015.

\bibitem{LLSY:DravinsNeytcheva:2021}
I.~Dravins and M.~Neytcheva.
\newblock {\em On the Numerical Solution of State- and Control-constrained
  Optimal Control Problems}.
\newblock Department of Information Technology, Uppsala Universitet, 2021.

\bibitem{LLSY:ElmanSilvesterWathen:2005Book}
H.~C. Elman, D.~J. Silvester, and A.~J. Wathen.
\newblock {\em Finite elements and fast iterative solvers: With applications in
  incompressible fluid dynamics}.
\newblock Oxford University Press, 2005.

\bibitem{LLSY:GanglLoescherSteinbach:2023}
P.~Gangl, R.~L\"oscher, and O.~Steinbach.
\newblock Regularization and finite element error estimates for elliptic
  distributed optimal control problems with energy regularization and state or
  control constraints, 2023.
\newblock arXiv:2306.15316.

\bibitem{LLSY:Grisvard:1985}
P.~Grisvard.
\newblock {\em Elliptic problems in nonsmooth domains}, volume~69 of {\em
  Classics in Applied Mathematics}.
\newblock SIAM, Philadelphia, 2011.

\bibitem{LLSY:HerzogHeinkenschlossKaliseStadlerTrelat:2022Proceedings}
R.~Herzog, M.~Heinkenschloss, D.~Kalise, G.~Stadler, and E.~Tr\'{e}lat,
  editors.
\newblock {\em Optimization and Control for Partial Differential Equations},
  volume~29 of {\em Radon Series on Computational and Applied Mathematics}. de
  Gruyter, 2022.

\bibitem{LLSY:HintermuellerItoKunisch2003SIOPT}
M.~Hinterm\"uller, K.~Ito, and K.~Kunisch.
\newblock The primal-dual active set method as a semismooth {N}ewton method.
\newblock {\em SIAM J. Optimization}, 13:865--888, 2003.

\bibitem{LSTY:HinzePinnauUlbrichUlbrich:2009Book}
M.~Hinze, R.~Pinnau, M.~Ulbrich, and S.~Ulbrich.
\newblock {\em Optimization with PDE Constraints}, volume~23 of {\em
  Mathematical Modelling: Theory and Applications}.
\newblock Springer-Verlag, Berlin, 2009.

\bibitem{LLST:JungLangerMeyerQueckSchneider:1989a}
M.~Jung, U.~Langer, A.~Meyer, W.~Queck, and M.~Schneider.
\newblock Multigrid preconditioners and their applications.
\newblock In G.~Telschow, editor, {\em Third Multigrid Seminar, Biesenthal
  1988}, pages 11--52, Karl--Weierstrass--Institut Berlin, Report
  R--MATH--03/89, 1989.

\bibitem{LLSY:KunischReiterer2015ANM}
K.~Kunisch and S.~Reiterer.
\newblock A {G}autschi time-stepping approach to optimal control of the wave
  equation.
\newblock {\em Appl. Numer. Math.}, 90:55--76, 2015.

\bibitem{LSTY:Ladyzhenskaya:1985a}
O.~A. Ladyzhenskaya.
\newblock {\em The boundary value problems of mathematical physics}, volume~49
  of {\em Applied Mathematical Sciences}.
\newblock Springer, New York, 1985.

\bibitem{LLSY:LangerLoescherSteinbachYang:2023arXiv:2304.14664}
U.~Langer, R.~L{\"o}scher, O.~Steinbach, and H.~Yang.
\newblock Mass-lumping discretization and solvers for distributed elliptic
  optimal control problems, 2023.
\newblock arXiv:2304.14664.

\bibitem{LLSY:LangerLoescherSteinbachYang:2024CAMWA}
U.~Langer, R.~L\"oscher, O.~Steinbach, and H.~Yang.
\newblock An adaptive finite element method for distributed elliptic optimal
  control problems with variable energy regularization.
\newblock {\em Comput. Math. Appl.}, 160:1--14, 2024.

\bibitem{LLSY:LangerSteinbachTroeltzschYang:2021c}
U.~Langer, O.~Steinbach, F.~Tr{\"o}ltzsch, and H.~Yang.
\newblock Space-time finite element discretization of parabolic optimal control
  problems with energy regularization.
\newblock {\em SIAM J. Numer. Anal.}, 59(2):660--674, 2021.

\bibitem{LLSY:LangerSteinbachTroeltzschYang:2021b}
U.~Langer, O.~Steinbach, F.~Tr{\"o}ltzsch, and H.~Yang.
\newblock Unstructured space-time finite element methods for optimal control of
  parabolic equation.
\newblock {\em SIAM J. Sci. Comput.}, 43(2):A744--A771, 2021.

\bibitem{LSTY:Lions:1968Book}
J.~L. Lions.
\newblock {\em Contr\^{o}le optimal de syst\`{e}mes gouvern\'{e}s par des
  \'{e}quations aux d\'{e}riv\'{e}es partielles}.
\newblock Dunod Gauthier-Villars, Paris, 1968.

\bibitem{LLSY:LoescherSteinbach:2024SINUM}
R.~L\"{o}scher and O.~Steinbach.
\newblock Space-time finite element methods for distributed optimal control of
  the wave equation.
\newblock {\em SIAM J. Numer. Anal.}, 62(1):452--475, 2024.

\bibitem{LLSY:MardalWinther:2011NLA}
K.-A. Mardal and R.~Winther.
\newblock Preconditioning discretizations of systems of partial differential
  equations.
\newblock {\em Numer. Linear Algebra Appl.}, 18(1):1--40, 2011.

\bibitem{LLSY:NeumuellerSteinbach:2021a}
M.~Neum\"uller and O.~Steinbach.
\newblock Regularization error estimates for distributed control problems in
  energy spaces.
\newblock {\em Math. Methods Appl. Sci.}, 44:4176--4191, 2021.

\bibitem{LLSY:PearsonStollWathen:2014NLA}
J.~Pearson, M.~Stoll, and A.~Wathen.
\newblock Preconditioners for state-constrained optimal control problems with
  moreau-yosida penalty function.
\newblock {\em Numer. Linear Algebra Appl.}, 21(1):81--97, 2014.

\bibitem{LLSY:PearsonWathen:2012NLA}
J.~Pearson and A.~Wathen.
\newblock A new approximation of the {S}chur complement in preconditioners for
  {PDE}-constrained optimization.
\newblock {\em Numer. Linear Algebra Appl.}, 12(5):816--829, 2012.

\bibitem{PeraltaKunisch:2022}
G.~Peralta and K.~Kunisch.
\newblock Mixed and hybrid {P}etrov--{G}alerkin finite element discretization
  for optimal control of the wave equation.
\newblock {\em Numer. Math.}, 150:591--627, 2022.

\bibitem{LLSY:Rozloznik:2018Book}
M.~Rozlo\v{z}n\'{\i}ík.
\newblock {\em Saddle-Point Problems and Their Iterative Solution}.
\newblock Ne\u{c}as Center Series. Birkhäuser Cham, 2018.

\bibitem{LLSY:RugeStuben}
J.~W. Ruge and K.~St\"{u}ben.
\newblock Algebraic multigrid ({AMG}).
\newblock In S.~F. McCormick, editor, {\em Multigrid Methods}, pages 73--130.
  SIAM, Philadelphia, 1987.

\bibitem{LLSY:SchielaUlbrich:2014SIOPT}
A.~Schiela and S.~Ulbrich.
\newblock Operator preconditioning for a class of inequality constrained
  optimal control problems.
\newblock {\em SIAM J. Optim.}, 24(1):435--466, 2014.

\bibitem{LLSY:SchoeberlZulehner:2007SIMAX}
J.~Sch\"oberl and W.~Zulehner.
\newblock Symmetric indefinite preconditioners for saddle point problems with
  applications to {PDE}-constrained optimization problems.
\newblock {\em SIAM J. Matrix Anal. Appl.}, 29:752--773, 2007.

\bibitem{LLSY:SchulzWittum:2008CVS}
V.~Schulz and G.~Wittum.
\newblock Transforming smoothers for pde constrained optimization problems.
\newblock {\em Comput. Visual. Sci.}, 11:207--219, 2008.

\bibitem{LLSY:ScottZhang:1990}
L.~R. Scott and S.~Zhang.
\newblock Finite element interpolation of nonsmooth functions satisfying
  boundary conditions.
\newblock {\em Math. Comp.}, 54(190):483--493, 1990.

\bibitem{LLSY:SteinbachZank:2020}
O.~Steinbach and M.~Zank.
\newblock Coercive space-time finite element methods for initial boundary value
  problems.
\newblock {\em Electron. Trans. Numer. Anal.}, 52:154--194, 2020.

\bibitem{LLSY:SteinbachZank:2022}
O.~Steinbach and M.~Zank.
\newblock A generalized inf-sup stable variational formulation for the wave
  equation.
\newblock {\em J. Math. Anal. Appl.}, 505(1):Paper No. 125457, 24, 2022.

\bibitem{LLSY:RS08}
R.~Stevenson.
\newblock The completion of locally refined simplicial partitions created by
  bisection.
\newblock {\em Math. Comp.}, 77(261):227--241, 2008.

\bibitem{LLSY:StollWathen:2012NLA}
M.~Stoll and A.~Wathen.
\newblock Preconditioning for partial differential equation constrained
  optimization with control constraints.
\newblock {\em Numer. Linear Algebra Appl.}, 19:53--71, 2012.

\bibitem{LLSY:Troeltzsch:2010Book}
F.~Tr\"{o}ltzsch.
\newblock {\em Optimal control of partial differential equations: Theory,
  methods and applications}, volume 112 of {\em Graduate Studies in
  Mathematics}.
\newblock American Mathematical Society, Providence, Rhode Island, 2010.

\bibitem{LLSY:Wathen:2015ActaNumerica}
A.~Wathen.
\newblock Preconditioning.
\newblock {\em Acta Numerica}, 24:329--376, 2015.

\bibitem{LLSY:Zank:2020}
M.~Zank.
\newblock {\em Inf-sup stable space-time methods for time-dependent partial
  differential equations}, volume~38 of {\em Computation in Engineering and
  Science}.
\newblock Verlag der Technischen Universität Graz, 2020.

\bibitem{LLSY:Zulehner:2002MCOM}
W.~Zulehner.
\newblock Analysis of iterative methods for saddle point problems: a unified
  approach.
\newblock {\em Math. Comp.}, 71(238):479--505, 2002.

\end{thebibliography}
\bibliographystyle{abbrv} 
  
\ifnum\pub=0 
\fi
\ifnum\pub=1
\pagebreak
\section*{Appendix}
Recall, that in order to derive the finite element error estimates in Theorem \ref{thm:l2-reg:fem-estiamtes} for the $L^2$-regularization, resulting in the optimal choice $\varrho = h^4$, we needed to assume the regularization error estimates in Proposition \ref{prop:regularization-interpolation-estimates-L2}, given as 
\begin{equation*}
    |y_\varrho - y_d |_{H^1(Q)} \leq c \, \varrho^{1/4} \,
    \| \Box y_d \|_{L^2(Q)},
  \end{equation*}
  and
  \begin{equation*}
    | p_\varrho |_{H^1(Q)} \leq c \, \varrho^{3/4} \, \| \Box y_d \|_{L^2(Q)},
  \end{equation*}
for $y_d\in H_{0;0,}^{1,1}(Q)$ such that $\Box y_d\in L^2(Q)$. 
Although, in Remark \ref{rem:regularization-interpolation-estimates-L2}, we already gave an example, showing that the estimates \eqref{Estimate p H1 H2} and \eqref{Estimate y H1 H2} do not hold for any target function $y_d$ that is smooth enough, we want to numerically demonstrate that the interpolation estimates are indeed true for some targets. 
To this end we will consider the one-dimensional case in space, i.e., $Q = \Omega\times (0,T)=(0,1)^2\subset \mathbb{R}^2$ and the smooth targets $y_{d,i}\in H^2(Q)\cap H_{0;0,}^{1,1}(Q)$, see Figure \ref{fig:targets}, given as
\begin{equation*}
    y_{d,1}(x,t) = \begin{cases}
        \frac{1}{2}(6t-3x-2)^3(3x-6t)^3\sin(\pi x),& x\leq 2t \; \text{and}\; 6t-3x\leq 2,\\
        0,& \text{else}
    \end{cases}
\end{equation*}
\begin{equation*}
    y_{d,2}(x,t) = \sin(\pi x)\sin(\pi t),
\end{equation*}
\begin{equation*}
    y_{d,3}(x,t) = t^2\sin(\pi x).
\end{equation*}

\begin{figure}[ht]
	\centering
	\begin{subfigure}[b]{0.3\textwidth}
		\includegraphics[width=\textwidth]{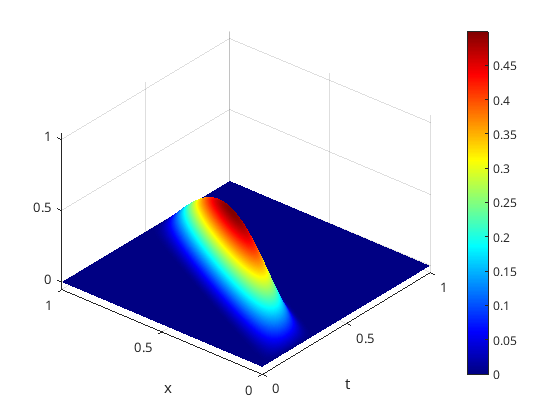}
		\caption{$y_{d,1}$}
	\end{subfigure}
	\hfill
    \begin{subfigure}[b]{0.3\textwidth}
		\includegraphics[width=\textwidth]{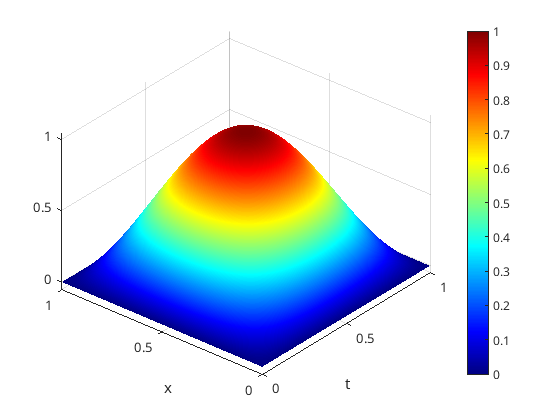}
		\caption{$y_{d,2}$}
	\end{subfigure}
	\hfill
    \begin{subfigure}[b]{0.3\textwidth}
		\includegraphics[width=\textwidth]{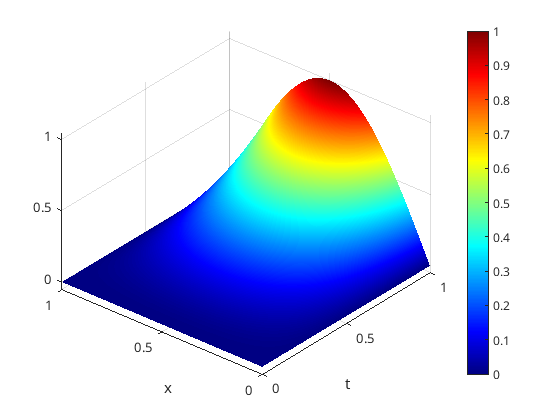}
		\caption{$y_{d,3}$}
	\end{subfigure}
	\caption{Targets $y_{d,i}\in H_{0;0,}^{1,1}(Q)\cap H^2(Q)$, $i=1,2,3$. }
	\label{fig:targets}
\end{figure}

\begin{figure}
  \centering
  \begin{subfigure}[b]{0.4\textwidth}
  \begin{tikzpicture}[scale=0.8]
    \begin{axis}[
      xmode = log,
      ymode = log,
      xlabel=$\varrho_j$,
      legend pos=south east, legend style={font=\tiny}]
      \addplot table [col sep=
      &, y=erryL2, x=rho, blue]{tab-p-y-H1interpolation-ydwave-rho2-i.dat};
      \addlegendentry{$\|y_d-y_{\varrho_j}\|_{L^2(Q)}$}
      \addplot table [col sep=
      &, y=erryH1, x=rho, red]{tab-p-y-H1interpolation-ydwave-rho2-i.dat};
      \addlegendentry{$|y_d-y_{\varrho_j}|_{H^1(Q)}$}
      \addplot table [col sep=
      &, y=pH1, x=rho, brown]{tab-p-y-H1interpolation-ydwave-rho2-i.dat};
      \addlegendentry{$|p_{\varrho_j}|_{H^1(Q)}$}
      \addplot[
      domain = 10^(-7):0.5*10^(-5),
      samples = 2,
      dashed, thin,blue] {20*x^(0.5)};
      \addlegendentry{$\varrho_j^{1/2}$}
      \addplot[
      domain = 10^(-7):0.5*10^(-5),
      samples = 2,
      dashed, thin,red] {20*x^(0.25)};
      \addlegendentry{$\varrho_j^{1/4}$}
      \addplot[
      domain = 10^(-7):0.5*10^(-5),
      samples = 2,
      dashed, thin,brown] {30*x^(0.75)};
      \addlegendentry{$\varrho_j^{3/4}$}
    \end{axis}
    \end{tikzpicture}
    \caption{$y_{d,1}$ }
    \end{subfigure}
    \hfill
  \begin{subfigure}[b]{0.4\textwidth}
  \begin{tikzpicture}[scale=0.8]
    \begin{axis}[
      xmode = log,
      ymode = log,
      xlabel=$\varrho_j$,
      legend pos=south east, legend style={font=\tiny}]
      \addplot table [col sep=
      &, y=erryL2, x=rho, blue]{tab-p-y-H1interpolation-ydsin-rho2-i.dat};
      \addlegendentry{$\|y_d-y_{\varrho_j}\|_{L^2(Q)}$}
      \addplot table [col sep=
      &, y=erryH1, x=rho, red]{tab-p-y-H1interpolation-ydsin-rho2-i.dat};
      \addlegendentry{$|y_d-y_{\varrho_j}|_{H^1(Q)}$}
      \addplot table [col sep=
      &, y=pH1, x=rho, brown]{tab-p-y-H1interpolation-ydsin-rho2-i.dat};
      \addlegendentry{$|p_{\varrho_j}|_{H^1(Q)}$}
      \addplot[
      domain = 10^(-7):0.5*10^(-5),
      samples = 2,
      dashed, thin,blue] {20*x^(0.5)};
      \addlegendentry{$\varrho_j^{1/2}$}
      \addplot[
      domain = 10^(-7):0.5*10^(-5),
      samples = 2,
      dashed, thin,red] {20*x^(0.25)};
      \addlegendentry{$\varrho_j^{1/4}$}
      \addplot[
      domain = 10^(-7):0.5*10^(-5),
      samples = 2,
      dashed, thin,brown] {30*x^(0.75)};
      \addlegendentry{$\varrho_j^{3/4}$}
    \end{axis}
    \end{tikzpicture}
  \caption{$y_{d,2}$}
    \end{subfigure}
    \\
    \begin{subfigure}[b]{0.4\textwidth}
  \begin{tikzpicture}[scale=0.8]
    \begin{axis}[
      xmode = log,
      ymode = log,
      xlabel=$\varrho_j$,
      legend pos=south east, legend style={font=\tiny}]
      \addplot table [col sep=
      &, y=erryL2, x=rho, blue]{tab-p-y-H1interpolation-ydsint-rho2-i.dat};
      \addlegendentry{$\|y_d-y_{\varrho_j}\|_{L^2(Q)}$}
      \addplot table [col sep=
      &, y=erryH1, x=rho, red]{tab-p-y-H1interpolation-ydsint-rho2-i.dat};
      \addlegendentry{$|y_d-y_{\varrho_j}|_{H^1(Q)}$}
      \addplot table [col sep=
      &, y=pH1, x=rho, brown]{tab-p-y-H1interpolation-ydsint-rho2-i.dat};
      \addlegendentry{$|p_{\varrho_j}|_{H^1(Q)}$}
      \addplot[
      domain = 10^(-7):0.5*10^(-5),
      samples = 2,
      dashed, thin,blue] {2*x^(0.5)};
      \addlegendentry{$\varrho_j^{1/2}$}
      \addplot[
      domain = 10^(-7):0.5*10^(-5),
      samples = 2,
      dashed, thin,red] {3*x^(0.25)};
      \addlegendentry{$\varrho_j^{1/4}$}
      \addplot[
      domain = 10^(-7):0.5*10^(-5),
      samples = 2,
      dashed, thin,brown] {2*x^(0.75)};
      \addlegendentry{$\varrho_j^{3/4}$}
    \end{axis}
    \end{tikzpicture}
    \caption{$y_{d,3}$ }
    \end{subfigure}
    \caption{Convergence plots for the targets $y_{d,i}$, $i=1,2,3$, choosing $\varrho_j =2^{-j}$, $j=14,\ldots,23$ for the $L^2$-regularization where the reference solution $y_{\varrho_j}=y_{\varrho_j h}\in Y_h$ is computed via a finite element method on a uniform mesh with $n_h=131072$ simplicial elements and $m_h = 65280$ DoFs with mesh size $h=2.7621$e$-3$.}
    \label{fig:regularization-interpolation-estimates-rho2}
\end{figure}

\begin{figure}
  \centering
  \begin{subfigure}[b]{0.4\textwidth}
  \begin{tikzpicture}[scale=0.8]
    \begin{axis}[
      xmode = log,
      ymode = log,
      xlabel=$\varrho_j$,
      legend pos=south east, legend style={font=\tiny}]
      \addplot table [col sep=
      &, y=erryL2, x=rho, blue]{tab-p-y-H1interpolation-ydwave.dat};
      \addlegendentry{$\|y_d-y_{\varrho_j}\|_{L^2(Q)}$}
      \addplot table [col sep=
      &, y=erryH1, x=rho, red]{tab-p-y-H1interpolation-ydwave.dat};
      \addlegendentry{$|y_d-y_{\varrho_j}|_{H^1(Q)}$}
      \addplot table [col sep=
      &, y=pH1, x=rho, brown]{tab-p-y-H1interpolation-ydwave.dat};
      \addlegendentry{$|p_{\varrho_j}|_{H^1(Q)}$}
      \addplot[
      domain = 10^(-9):10^(-5),
      samples = 2,
      dashed, thin,blue] {30*x^(0.5)};
      \addlegendentry{$\varrho_j^{1/2}$}
      \addplot[
      domain = 10^(-9):10^(-5),
      samples = 2,
      dashed, thin,red] {50*x^(0.25)};
      \addlegendentry{$\varrho_j^{1/4}$}
      \addplot[
      domain = 10^(-11):10^(-8),
      samples = 2,
      dashed, thin,brown] {30*x^(0.75)};
      \addlegendentry{$\varrho_j^{3/4}$}
    \end{axis}
    \end{tikzpicture}
    \caption{$y_{d,1}$ }
    \end{subfigure}
    \hfill
  \begin{subfigure}[b]{0.4\textwidth}
  \begin{tikzpicture}[scale=0.8]
    \begin{axis}[
      xmode = log,
      ymode = log,
      xlabel=$\varrho_j$,
      legend pos=south east, legend style={font=\tiny}]
      \addplot table [col sep=
      &, y=erryL2, x=rho, blue]{tab-p-y-H1interpolation-ydsin.dat};
      \addlegendentry{$\|y_d-y_{\varrho_j}\|_{L^2(Q)}$}
      \addplot table [col sep=
      &, y=erryH1, x=rho, red]{tab-p-y-H1interpolation-ydsin.dat};
      \addlegendentry{$|y_d-y_{\varrho_j}|_{H^1(Q)}$}
      \addplot table [col sep=
      &, y=pH1, x=rho, brown]{tab-p-y-H1interpolation-ydsin.dat};
      \addlegendentry{$|p_{\varrho_j}|_{H^1(Q)}$}
      \addplot[
      domain = 10^(-11):10^(-9),
      samples = 2,
      dashed, thin,blue] {50*x^(0.5)};
      \addlegendentry{$\varrho_j^{1/2}$}
      \addplot[
      domain = 10^(-11):10^(-9),
      samples = 2,
      dashed, thin,red] {50*x^(0.25)};
      \addlegendentry{$\varrho_j^{1/4}$}
      \addplot[
      domain = 10^(-11):10^(-9),
      samples = 2,
      dashed, thin,brown] {70*x^(0.75)};
      \addlegendentry{$\varrho_j^{3/4}$}
    \end{axis}
    \end{tikzpicture}
  \caption{$y_{d,2}$}
    \end{subfigure}
    \\
    \begin{subfigure}[b]{0.4\textwidth}
  \begin{tikzpicture}[scale=0.8]
    \begin{axis}[
      xmode = log,
      ymode = log,
      xlabel=$\varrho_j$,
      legend pos=south east, legend style={font=\tiny}]
      \addplot table [col sep=
      &, y=erryL2, x=rho, blue]{tab-p-y-H1interpolation-ydsint.dat};
      \addlegendentry{$\|y_d-y_{\varrho_j}\|_{L^2(Q)}$}
      \addplot table [col sep=
      &, y=erryH1, x=rho, red]{tab-p-y-H1interpolation-ydsint.dat};
      \addlegendentry{$|y_d-y_{\varrho_j}|_{H^1(Q)}$}
      \addplot table [col sep=
      &, y=pH1, x=rho, brown]{tab-p-y-H1interpolation-ydsint.dat};
      \addlegendentry{$|p_{\varrho_j}|_{H^1(Q)}$}
      \addplot[
      domain = 10^(-10):10^(-7),
      samples = 2,
      dashed, thin,blue] {3*x^(0.5)};
      \addlegendentry{$\varrho_j^{1/2}$}
      \addplot[
      domain = 10^(-9):10^(-5),
      samples = 2,
      dashed, thin,red] {5*x^(0.25)};
      \addlegendentry{$\varrho_j^{1/4}$}
      \addplot[
      domain = 10^(-11):10^(-9),
      samples = 2,
      dashed, thin,brown] {x^(0.75)};
      \addlegendentry{$\varrho_j^{3/4}$}
    \end{axis}
    \end{tikzpicture}
    \caption{$y_{d,3}$ }
    \end{subfigure}
    \caption{Convergence plots for the targets $y_{d,i}$, $i=1,2,3$, choosing $\varrho_j =10^{-j}$, $j=2,\ldots,11$ for the $L^2$-regularization where the reference solution $y_{\varrho_j}=y_{\varrho_j h}\in Y_h$ is computed via a finite element method on a uniform mesh with $n_h=131072$ simplicial elements and $m_h = 65280$ DoFs with mesh size $h=2.7621$e$-3$.}
    \label{fig:regularization-interpolation-estimates-rho10}
\end{figure}

In order to check the interpolation error estimates, we consider a sequence of fixed $\varrho_j >0$, $j\in\mathbb{N}$, and compute for each target $y_{d,i}$, $i=1,2,3$, a related state $y_{\varrho_j} = y_{\varrho_j h} \in Y_h$ on a fine mesh with $n_h=131072$ elements and $m_h = 65280$ DoFs. In Figure \ref{fig:regularization-interpolation-estimates-rho2} the results for $\varrho_j = 2^{-j}$, $j=14,\ldots,23$ are depicted. We clearly see the predicted behavior, i.e., $|p_\varrho|_{H^1(Q)}\simeq \varrho^{3/4}$ and $|y_d-y_\varrho|_{H^1(Q)}\simeq \varrho^{1/4}$. Morover, we also plot the $L^2$-error of the state to the target, where we observe the behavior $\|y_d-y_\varrho\|_{L^2(Q)}\simeq \sqrt{\varrho}$, which fits perfectly to the theoretical findings. In Figure \ref{fig:regularization-interpolation-estimates-rho10} we show the results for $\varrho_j = 10^{-j}$, $j=2,\ldots,11$. Note, that after a while the $L^2$-convergence breaks down, as a result of the best approximation property of $Y_h$ in $L^2(Q)$ when computing $y_{\varrho_j} = y_{\varrho_j h}\in Y_h$. Having a closer look, this happens when $\varrho_j \simeq h^4$. This supports the optimal choice $\varrho = h^4$, since choosing a smaller parameter $\varrho>0$ will not lead to a better approximation of the desired target for a given mesh size $h>0$. Note, that the $H^1$-error seems to stagnate even earlier.  

\fi
\end{document}